\documentclass[12pt]{extarticle}
\usepackage[utf8]{inputenc}
\usepackage{xcolor}
\usepackage{verbatim}
\usepackage{float}
\usepackage{amsmath}
\usepackage{amsthm}
\usepackage{amssymb}
\usepackage{graphicx}
\usepackage{geometry}
\usepackage{wasysym}
\usepackage[pdfusetitle,
 bookmarks=true,bookmarksnumbered=false,bookmarksopen=false,
 breaklinks=false,pdfborder={0 0 0},pdfborderstyle={},backref=page,colorlinks=true]
 {hyperref}
\hypersetup{
 linkcolor=blue, citecolor=blue, urlcolor=blue}

\makeatletter
\numberwithin{equation}{section}
\numberwithin{figure}{section}

\@ifundefined{date}{}{\date{}}
\usepackage[utf8]{inputenc}
\usepackage{amsmath}
\usepackage{amssymb}
\usepackage{amsthm}
\usepackage{mathtools}
\usepackage{graphicx}
\usepackage{tikz-cd}
\usepackage{mlmodern}
\usepackage{fouriernc}
\usepackage[T1]{fontenc}

\usepackage{xcolor}
\color{black}
\usepackage{pdfrender}
\usepackage{etoolbox}
\AtBeginEnvironment{tikzpicture}{\pdfrender{TextRenderingMode=0}}
\AtBeginEnvironment{pgfpicture}{\pdfrender{TextRenderingMode=0}}
\usepackage{mathtools}        
\usepackage{bm}               
\usepackage{mathrsfs}         
\usepackage{dsfont}           
\usepackage{cancel}           
\usepackage{graphicx}
\usepackage{booktabs}

\newcommand\independent{\protect\mathpalette{\protect\independenT}{\perp}}
\def\independenT#1#2{\mathrel{\rlap{$#1#2$}\mkern2mu{#1#2}}}

\AtBeginDocument{

}

\makeatother

\theoremstyle{plain}
\newtheorem{thm}{\protect\theoremname}
\theoremstyle{definition}
\newtheorem{defn}[thm]{\protect\definitionname}
\theoremstyle{plain}
\newtheorem{prop}[thm]{\protect\propositionname}
\newtheorem{cor}[thm]{\protect\corollaryname}
\providecommand{\corollaryname}{Corollary}
\providecommand{\definitionname}{Definition}
\providecommand{\propositionname}{Proposition}
\providecommand{\theoremname}{Theorem}

\begin{document}
\title{Space-Time Transport of Brownian Exit Laws}
\author{Maher Boudabra \thanks{Preparatory Institute of Engineering Studies of Monastir, Tunisia. }}
\maketitle
\begin{abstract}
The present paper is devoted to a systematic study of the $p$-Brownian
convergence introduced in \cite{boudabra2026stability} (in press)
to study the stability of the planar Skorokhod embedding problem \cite{gross2019,Boudabra2020}.
The first part is an illustration of some geometric aspects of the
$p$-Brownian convergence. The second part turns this notion into
a metric between domains. More precisely, we place it within the framework
of optimal transport theory. Several results are obtained, namely
asymptotic behavior in case of homothetic domains. Numerical illustrations
are provided as well. 
\end{abstract}
\tableofcontents{}

\section{Preliminaries }

In a recent work \cite{boudabra2026stability}, motivated by the study
of numerical approximation of the solutions of the planar Skorokhod
embedding problem \cite{gross2019,BOUDABRA2026,Boudabra2020}, the
authors introduced a new mode of convergence for planar domains, which
they called \emph{$p$-Brownian convergence} with $p>0$. This mode
of convergence captures the geometry of domains through the time-space
exit pair of planar Brownian motion. We recall the definition in a
form that emphasizes that only a coupling of the exit-pair laws is
required.
\begin{defn}
\label{p-conv} Let $U$ be planar domain and let $(U_{n})_{n}$ be
a sequence of planar domains, all containing a common point, say the
origin. For each $n$, let $(Z^{[n]}_{t})_{t\geq0}$ be a planar Brownian
motion started at $0$ and denote by $\tau_{n}$ its exit time from
$U_{n}$. Similarly, let $(Z_{t})_{t\geq0}$ be a planar Brownian
motion started at $0$ and denote by $\tau$ its exit time from $U$.
We say that $(U_{n})_{n}$ converges to a domain $U$ in the \textbf{$p$-Brownian
sense} if there exist two random vectors $(\xi_{n},\theta_{n})$ and
$(\xi,\theta)$ on some common probability space, such that 
\[
(\xi_{n},\theta_{n})\sim(Z^{[n]}_{\tau_{n}},\tau_{n}),\,\,(\xi,\theta)\sim(Z_{\tau},\tau)
\]
and 
\[
\begin{alignedat}{1}\mathbf{E}\left(\big|\xi_{n}-\xi\big|^{p}\right)+\mathbf{E}\left(\big|\theta_{n}-\theta\big|^{\frac{p}{2}}\right) & \underset{n\to+\infty}{\longrightarrow}0\end{alignedat}
.
\]
\end{defn}

This formulation should be understood as a coupling of the exit-pair
laws only. It does not require the Brownian motions in $U_{n}$ and
$U$ to be realized as the same planar Brownian path. Same-path couplings
will appear later as a useful special case, but they are not part
of the definition. Note that $p$-Brownian convergence implies $q$-Brownian
convergence for $0<q<p$. The joint realization of the two random
vectors
\[
(\xi_{n},\theta_{n})\qquad\text{and}\qquad(\xi,\theta)
\]
is called a coupling of the exit-pair laws of
\[
(Z^{[n]}_{\tau_{n}},\tau_{n})\qquad\text{and}\qquad(Z_{\tau},\tau).
\]
 This will be elucidated in section \ref{sec:A-Brownian-Wasserstein-distance}.
The pair $(Z_{\tau_{U}},\tau_{U})$ consists of the Brownian exit
location and the Brownian exit time. The first component records where
the boundary is seen from the base point, while the second records
the time scale of escape. Taken together, these two variables provide
a natural probabilistic signature of $U$. In this sense, Brownian
convergence measures the effective geometry of a domain as seen by
a diffusing particle starting from the origin. The law of $Z_{\tau}$
is the harmonic measure $\omega^{0}_{U}$ of the domain $U$. If the
boundary of $U$ is smooth, then in many cases, one can manage to
compute the probability density $\rho$ of $Z_{\tau}$. In this context,
when $U$ is simply connected then univalent functions are widely
used to compute the p.d.f of $Z_{\tau}$. More precisely, if $f$
is a univalent map from $\mathbb{D}$ onto $U$ , with $f(0)=0$,
that extends to be analytic across $\partial\mathbb{D}$ then 
\begin{equation}
d\omega^{0}_{U}(z)=\frac{1}{2\pi}\bigl|(f^{-1})'(z)\bigr|\,|dz|,\qquad z\in\partial U.\label{density Z_tau}
\end{equation}
See \cite{markowsky2018distribution}. On the other hand, the law
of the exit time $\tau$ is usually much harder to compute explicitly.
To the best of our knowledge, there are only a few domains for which
the density of $\tau$ is explicit. The best-known example is the
half-plane, where the problem reduces to the hitting time of a one-dimensional
Brownian motion. Since the survival function $(z,t)\mapsto\mathbf{P}_{z}(\tau_{U}>t)$
solves the heat equation with Dirichlet boundary condition, PDE techniques
are often used to study the law of $\tau_{U}$. For example, one obtains
infinite-series representations when $U$ is the unit disc or an infinite
strip. The literature also contains estimates for more challenging
domains \cite{banuelos1997brownian,banuelos2005sharp}. Thus in our
framework, the joint law of the pair $(Z_{\tau},\tau)$ is hard to
compute explicitly. Moreover, $Z_{\tau}$ and $\tau$ are typically
not independent. 

\begin{defn}
Let $p>0$. The Hardy space of index $p$, denoted by $\mathbf{H}^{p}(\mathbb{D})$,
is the space of all analytic maps $f$ on the unit disc such that
\begin{equation}
\sup_{0<r<1}\frac{1}{2\pi}\int^{2\pi}_{0}|f(re^{i\theta})|^{p}\,d\theta<+\infty.\label{hardy norm def}
\end{equation}
The $p$-th root of \ref{hardy norm def} is denoted by $\Vert f\Vert_{\mathbf{H}^{p}(\mathbb{D})}$
and it is called the $p$-th Hardy norm of $f$ . 
\end{defn}

When $p<1$, the $\Vert\cdot\Vert_{\mathbf{H}^{p}(\mathbb{D})}$ is
not a ``genuine'' norm as the triangle inequality does not hold.
A Hardy norm encodes how the magnitude of $f$ varies on inner circles
of the unit disc. We refer the reader to \cite{Rudin2001,duren2000theory}
for more details about Hardy spaces. As the unit circle is of finite
measure, we have the inclusion 
\[
\mathbf{H}^{p+q}(\mathbb{D})\subset\mathbf{H}^{p}(\mathbb{D}).
\]
This inclusion raises a natural inquiry about the supremum $p$ for
which a map $f$ belongs to $\mathbf{H}^{p}(\mathbb{D})$.
\begin{defn}
Let $U\subsetneq\mathbb{C}$ be a simply connected domain, and let
$f:\mathbb{D}\to U$ be univalent onto $U$. The $\emph{Hardy number}$
of $U$ is defined by 
\[
h(U)=\sup\left\{ p>0\mid\Vert f\Vert_{\mathbf{H}^{p}(\mathbb{D})}<\infty\right\} .
\]
\end{defn}

The Hardy number $h(U)$ is a well defined: that is, it does not depend
on the choice of the conformal map $f$. It measures the largest exponent
$p$ for which a map $f$ from $\mathbb{D}$ onto $U$ belongs to
$\mathbf{H}^{p}(\mathbb{D})$. Hardy number goes back to the work
of Hansen \cite{hansen1970hardy}, who introduced this quantity in
his study of Hardy classes and ranges of analytic functions. Since
then, the Hardy number has been investigated from several geometric
points of view, including descriptions in terms of harmonic measure
and hyperbolic distance \cite{Karafyllia2019}. Its relevance to probability
was clarified by the seminal work of D. Burkholder \cite{burkholder1977exit},
who showed that for a simply connected planar domain $U$ the Hardy
number governs the moments of the Brownian exit time. More precisely,
if $\tau_{U}$ denotes the first exit time of planar Brownian motion
from $U$, then, among other results, Burkholder showed that if $f:\mathbb{D}\to U$
is a univalent function onto $U$ then 
\begin{equation}
\mathbf{E}(\tau^{\frac{p}{2}}_{U})<+\infty\Longleftrightarrow\Vert f\Vert_{\mathbf{H}^{p}(\mathbb{D})}<+\infty\Longleftrightarrow\mathbf{E}\left(\sup_{0\le s\le\tau_{U}}|Z_{s}|^{p}\right)<+\infty.\label{burkholder equivalence}
\end{equation}
In particular,
\begin{equation}
h(U)=2\sup\left\{ q>0:\ \mathbf{E}(\tau^{\,q}_{U})<\infty\right\} .\label{Brownian hardy number}
\end{equation}

As the right-hand side of \ref{Brownian hardy number} is meaningful

for any planar domain, we adopt it as the definition of the Brownian
Hardy number:
\[
h(U):=2\sup\left\{ q>0:\ \mathbf{E}(\tau^{q}_{U})<\infty\right\} .
\]

This extension was introduced in \cite{markowsky2015exit} under the
name Brownian Hardy number. The Hardy number encodes the heaviness
of the tail of $\tau_{U}$. If $U$ is bounded then obviously $h(U)=+\infty$.
If the domain $U$ is simply connected then its Hardy number can not
be too small. More precisely
\begin{thm}
\cite{duren2000theory} \label{thm:hardy lower bound} If $U$ is
a proper simply connected domain then $h(U)\geq\frac{1}{2}$. 
\end{thm}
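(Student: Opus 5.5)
The plan is to reduce to a single extremal map. I will use the Koebe distortion theorem together with Hardy-space integral means, and work with the analytic definition of $h(U)$ via a univalent map $f:\mathbb{D}\to U$. By Burkholder's equivalence \eqref{burkholder equivalence}, this agrees with the Brownian definition.

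First I normalize. Translations and dilations do not change membership of $f$ in $\mathbf{H}^{p}(\mathbb{D})$, since $|af+b|^{p}\le C_p(|a|^p|f|^p+|b|^p)$. So I may assume $f(0)=0$ and $f'(0)=1$, i.e.\ $f$ belongs to the class $S$. The goal then becomes: every $f\in S$ lies in $\mathbf{H}^{p}(\mathbb{D})$ for all $p<\tfrac12$.

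Second, I invoke the Koebe growth bound $|f(z)|\le \frac{|z|}{(1-|z|)^2}$. A pointwise bound alone is not enough, because it only gives $|f(re^{i\theta})|^p\le (1-r)^{-2p}$, which is not uniformly bounded in $r$. So the key step is a sharper integral-means estimate: for $f\in S$ and $0<p<\tfrac12$,
\[
M_p(r,f)^p=\frac{1}{2\pi}\int_0^{2\pi}|f(re^{i\theta})|^p\,d\theta\le \frac{1}{2\pi}\int_0^{2\pi}|k(re^{i\theta})|^p\,d\theta ,
\]
where $k(z)=z/(1-z)^2$ is the Koebe function. This is Baernstein's integral-means theorem, proved via the star function and circular symmetrization. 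That theorem is the main obstacle; here I would cite it, as in \cite{duren2000theory}.

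I also want a more elementary route that still gives the result. Since $f$ is univalent and $0\notin f(\mathbb{D}\setminus\{0\})$, the function $g(z)=\sqrt{f(z^2)}$ is an odd univalent function. Alternatively, one can use Prawitz's inequality
\[
M_p(r,f)^p\le p\int_0^r \frac{M_\infty(\rho,f)^p}{\rho}\,d\rho .
\]
Combined with Koebe's bound this yields $M_p(r,f)^p\le p\int_0^1 \rho^{p-1}(1-\rho)^{-2p}\,d\rho$. The right-hand side is finite precisely when $2p<1$. So $\sup_r M_p(r,f)<\infty$ for every $p<\tfrac12$, which gives $h(U)\ge\tfrac12$.

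Proving Prawitz's inequality is the delicate step. I would start from $\frac{d}{dr}\big(r\,\frac{d}{dr}M_p^p\big)=\frac{p^2}{2\pi}\int|f|^{p}|f'/f|^2 r\,d\theta$, use the subharmonicity of $|f|^p$, and apply the area-type identity $\int_{|z|<r}\Delta|f|^p = p^2\int |f|^{p-2}|f'|^2$. Univalence enters by counting the image area of $\{|f|<M_\infty(r)\}$ with multiplicity one. Finally, the Koebe function $k$, whose image is the plane slit along $(-\infty,-\tfrac14]$, shows the bound is sharp. This last point is not needed for the inequality.
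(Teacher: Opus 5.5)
Your proposal is correct, and your second route (Hardy's identity plus univalence giving Prawitz's bound $r\frac{d}{dr}M_p(r,f)^p\le p\,M_\infty(r,f)^p$, then the Koebe growth bound making $p\int_0^1\rho^{p-1}(1-\rho)^{-2p}\,d\rho$ finite for $p<\frac12$) is, to my recollection, the classical argument in Duren, which the paper cites without reproducing. The appeal to Baernstein's theorem is valid but unnecessary; it would also need the one-line check that the Koebe function lies in $\mathbf{H}^{p}(\mathbb{D})$ for $p<\frac12$, and your remark about $\sqrt{f(z^{2})}$ is never used and can be dropped.
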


In terms of Brownian exit times, Theorem \ref{thm:hardy lower bound}
says that for any simply connected domain $U$
\[
\mathbf{E}(\tau^{\,q}_{U})<+\infty
\]
whenever $q<\frac{1}{4}$. The information is handy to describe the
long time decay of the tail of $\tau_{U}$. By the layer cake representation
of the expectation, we have 
\[
\int^{+\infty}_{1}\frac{1}{t^{\gamma}}\mathbf{P}(\tau_{U}>t)dt<+\infty
\]
for any $\gamma>\frac{3}{4}$. In particular, no proper simply connected
domain $U$ can satisfy
\[
\mathbf{P}(\tau_{U}>t)\gtrsim t^{-\alpha}\qquad(t\to\infty)
\]
for some $\alpha<\frac{1}{4}$. In fact, the decay rate of $\frac{1}{\sqrt[4]{t}}$
in the long run is optimal and it is attained by the Koebe domain
$K=\mathbb{C}\setminus(-\infty,-\frac{1}{4}]$ \cite{brassesco1992,Betsakos2021a}. 

To avoid repeated qualifications, we adopt the following conventions
throughout the paper. All domains are open proper subsets of $\mathbb{C}$
and contain the origin, unless stated otherwise. Brownian motions
such as $(Z_{t})_{t\ge0}$ and $(B_{t})_{t\ge0}$ are standard planar
Brownian motions started at $0$. For a domain $U$, the notation
\[
\tau_{U}:=\inf\{t\ge0:\ Z_{t}\notin U\}
\]
always denotes the first exit time from $U$, with the underlying
Brownian motion clear from the context. The parameter $p$ is positive,
subject to any further assumptions stated locally. Finally, starlikeness
is always understood with respect to the origin.

The reader will notice that some proofs are given immediately after
the corresponding statements, while others are deferred to the proof
section. This organization of the proofs is intentional. In fact,
we keep a proof close to the statement whenever its argument contains
an idea that is conceptually useful for the development of the paper,
or when some part of the argument is used in subsequent results. Proofs
that are more technical, routine, or not needed for the immediate
flow of the exposition are postponed. A dagger symbol $\dagger$ is
added whenever the result proof is deferred.

\section{Some geometric aspects of the $p$-Brownian convergence }\label{sec:Some-aspects-of}

In this section, we illustrate some geometric properties of the $p$-Brownian
convergence. In particular, we show that $p$-Brownian convergence
provides a genuinely different way of tracking the asymptotic behavior
of a sequence of planar domains.
\begin{prop}
\label{prop:increasing-domains} Let $U$ be a proper simply connected
domain of $\mathbb{C}$ containing the origin, with positive $h(U)$,
and let $(U_{n})_{n\ge1}$ be a sequence of planar domains such that
\[
U_{1}\subset U_{2}\subset\cdots\subset U,\qquad\bigcup_{n\ge1}U_{n}=U.
\]
Then, for any $p<h(U)$, the sequence $(U_{n})_{n\geq1}$ converges
to $U$ in the $p$-Brownian sense. In fact, this convergence holds
under the same-path planar Brownian coupling. 
\end{prop}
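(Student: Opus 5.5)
We will prove the convergence under the same-path coupling. Fix one planar Brownian motion $(Z_{t})_{t\ge0}$ started at $0$. Set $\tau_{n}:=\tau_{U_{n}}$, $\tau:=\tau_{U}$, $\xi_{n}:=Z_{\tau_{n}}$, $\xi:=Z_{\tau}$. Then $(\xi_{n},\theta_{n})=(Z_{\tau_{n}},\tau_{n})$ and $(\xi,\theta)=(Z_{\tau},\tau)$ have the required laws. It remains to show
\[
\mathbf{E}\big(|\xi_{n}-\xi|^{p}\big)+\mathbf{E}\big(|\tau_{n}-\tau|^{\frac{p}{2}}\big)\longrightarrow0 .
\]
The strategy is pointwise (almost sure) convergence followed by a domination argument, with the domination supplied by Burkholder's equivalence \eqref{burkholder equivalence}.

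\textbf{Step 1: almost sure convergence.} Since $U_{n}\subset U_{n+1}\subset U$, the times $\tau_{n}$ are nondecreasing and bounded by $\tau$. Hence $\tau_{n}\uparrow\tau_{\infty}\le\tau$. Because $p<h(U)$, \eqref{Brownian hardy number} gives $\mathbf{E}(\tau^{p/2})<\infty$, so $\tau<\infty$ a.s. Suppose $\tau_{\infty}<\tau$ on some path. The set $K:=Z([0,\tau_{\infty}])$ is a compact subset of $U$, because $\tau_{\infty}<\tau$. Since $(U_{n})$ is an increasing open cover of $U$, $K\subset U_{N}$ for some $N$. This forces $\tau_{N}>\tau_{\infty}$, a contradiction. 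Thus $\tau_{n}\to\tau$ a.s., and path continuity gives $\xi_{n}=Z_{\tau_{n}}\to Z_{\tau}=\xi$ a.s.

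\textbf{Step 2: domination.} For all $n$ we have
\[
|\xi_{n}-\xi|^{p}\le2^{p}\sup_{0\le s\le\tau}|Z_{s}|^{p},\qquad|\tau_{n}-\tau|^{\frac{p}{2}}\le\tau^{\frac{p}{2}}.
\]
Both right-hand sides are integrable by \eqref{burkholder equivalence}. This uses $U$ simply connected and $p<h(U)$: pick $p<p'<h(U)$, so that $\Vert f\Vert_{\mathbf{H}^{p'}}<\infty$, hence $\Vert f\Vert_{\mathbf{H}^{p}}<\infty$. Dominated convergence then yields both limits. The hypothesis $h(U)>0$ is exactly what makes the range of admissible $p$ nonempty; by Theorem \ref{thm:hardy lower bound} it is in fact automatic.

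\textbf{Expected difficulty.} The only delicate point is Step 1. One must make sure that $\tau_{\infty}=\tau$ even though the $U_{n}$ need not have nice boundaries. The compactness argument above handles this, since it only uses that the $U_{n}$ are open and exhaust $U$. Everything else is routine once Burkholder's maximal-function bound is available. Without simple connectivity one would need a substitute for the $\sup|Z|^{p}$ integrability, and this is why that hypothesis appears.
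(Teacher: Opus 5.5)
Your proof is correct and follows the paper's argument: same-path coupling, $\tau_n\uparrow\tau$ and $Z_{\tau_n}\to Z_\tau$ almost surely, domination by $2^p\sup_{0\le s\le\tau}|Z_s|^p$ and $\tau^{p/2}$ via Burkholder's equivalence, then dominated convergence. Your compactness argument also justifies $\tau_n\uparrow\tau$, which the paper only asserts.

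One correction to your closing remark: simple connectivity is not what supplies the maximal bound. The Burkholder--Davis--Gundy inequality gives $\mathbf{E}\bigl(\sup_{0\le s\le\tau}|Z_s|^p\bigr)\le C_p\,\mathbf{E}(\tau^{p/2})$ for any stopping time, so with the Brownian Hardy number the same proof works for arbitrary domains with $p<h(U)$.
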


\begin{proof}
Let $(Z_{t})_{t\ge0}$ be a planar Brownian motion started at $0$,
and define 
\[
\tau_{n}:=\inf\{t\ge0:\,Z_{t}\notin U_{n}\},\qquad\tau:=\inf\{t\ge0:\,Z_{t}\notin U\}.
\]
Since $U_{n}\subset U_{n+1}\subset U$, we have
\[
\tau_{n}\uparrow\tau\qquad\text{a.s.}
\]
By continuity of Brownian paths, it follows that 
\[
Z_{\tau_{n}}\longrightarrow Z_{\tau}\qquad\text{a.s.}
\]
Now fix $0<p<h(U)$. We have
\[
\mathbf{E}\left(\sup_{0\le s\le\tau}|Z_{s}|^{p}\right)<+\infty
\]
by \ref{burkholder equivalence}. In particular
\[
|Z_{\tau_{n}}-Z_{\tau}|^{p}\le2^{p}\sup_{0\le s\le\tau}|Z_{s}|^{p},
\]
Since $Z_{\tau_{n}}\to Z_{\tau}$ almost surely, the dominated convergence
theorem yields 
\[
\mathbf{E}\big(|Z_{\tau_{n}}-Z_{\tau}|^{p}\big)\underset{n\to+\infty}{\longrightarrow}0.
\]
Similarly, because $\tau_{n}\uparrow\tau$ almost surely, then again
the dominated convergence theorem yields 
\[
\mathbf{E}\big(|\tau_{n}-\tau|^{\frac{p}{2}}\big)\underset{n\to+\infty}{\longrightarrow}0.
\]
Thus 
\[
\mathbf{E}\left(\big|Z_{\tau_{n}}-Z_{\tau}\big|^{p}\right)+\mathbf{E}\left(\big|\tau_{n}-\tau\big|^{p/2}\right)\underset{n\to+\infty}{\longrightarrow}0.
\]
\end{proof}

The definition of $p$-Brownian convergence measures the proximity,
in a coupled $L^{p}$-sense, of the exit pairs $(\xi_{n},\theta_{n})$
and $(\xi,\theta)$. A natural question is whether $p$-Brownian convergence
follows from geometric closeness of the boundaries $\partial U_{n}$
and $\partial U$.
\begin{prop}
\label{prop:-There-exist} There exist bounded domains $U,(U_{n})_{n\geq1}\subset\mathbb{C}$
such that 
\[
\sup_{u\in\partial U}\text{dist}(u,\partial U_{n})=0\qquad\text{for all }n,
\]
but $(U_{n})_{n\geq1}$ does not converge to $U$ in the $p$-Brownian
sense for any $p>0$. 
\end{prop}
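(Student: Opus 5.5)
The plan is to make the condition $\sup_{u\in\partial U}\mathrm{dist}(u,\partial U_{n})=0$ hold trivially by arranging $\partial U\subset\partial U_{n}$ for every $n$, while $U_{n}$ has a \emph{larger} boundary that Brownian motion actually hits. The natural choice is
\[
U=\mathbb{D},\qquad U_{n}=U_{0}:=\mathbb{D}\setminus[\tfrac12,1),
\]
the unit disc with a fixed radial slit removed; the sequence can even be constant. Then $\partial U_{n}=\partial\mathbb{D}\cup[\tfrac12,1]\supset\partial U$, so every $u\in\partial U$ has $\mathrm{dist}(u,\partial U_{n})=0$. All domains are bounded, so every moment of the exit pairs is finite and the expressions in Definition \ref{p-conv} make sense for every $p>0$.

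The main step is to show that, for any coupling $(\xi_{n},\theta_{n})\sim(Z_{\tau_{U_{0}}},\tau_{U_{0}})$ and $(\xi,\theta)\sim(Z_{\tau_{\mathbb{D}}},\tau_{\mathbb{D}})$, the quantity $\mathbf{E}|\xi_{n}-\xi|^{p}$ is bounded below by a constant $c_{p}>0$ that does not depend on $n$ or on the coupling. First, $\mathbf{P}(Z_{\tau_{U_{0}}}\in[\tfrac12,1))=:\delta>0$. Indeed, the slit is non-polar, and by the rotational symmetry of the disc the path reaches a neighbourhood of the slit with positive probability. Second, the set $S=\{|z|\le\tfrac34\}\cap[\tfrac12,1)$ still carries positive mass, say $\mathbf{P}(\xi_{n}\in S)=\delta'>0$. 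Since $\xi\in\partial\mathbb{D}$ almost surely, on the event $\{\xi_{n}\in S\}$ we have $|\xi_{n}-\xi|\ge\tfrac14$. Hence
\[
\mathbf{E}|\xi_{n}-\xi|^{p}\ge 4^{-p}\delta'>0 .
\]
This bound holds uniformly over all couplings and all $n$, so convergence fails for every $p>0$. Equivalently, the $L^{p}$-Wasserstein-type distance between the harmonic measures $\omega^{0}_{U_{0}}$ and $\omega^{0}_{\mathbb{D}}$ is positive, because the first of these measures charges the slit.

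The only point needing some care is the positivity of $\delta'$. I would get it from the conformal map: the map from $\mathbb{D}$ onto $U_{0}$ fixing $0$ sends a boundary arc of positive length onto each side of the slit segment $[\tfrac12,\tfrac34]$. The formula \eqref{density Z_tau} then gives positive harmonic measure to that segment. A purely probabilistic alternative also works: Brownian motion enters the disc $B(\tfrac58,\tfrac1{16})$ before exiting $\mathbb{D}$ with positive probability, and from any point there it hits the segment $[\tfrac12,\tfrac34]$ before leaving $B(\tfrac58,\tfrac18)$ with probability bounded below. If one prefers a genuinely non-constant sequence, taking $U_{n}=\mathbb{D}\setminus[\tfrac12,1-\tfrac1n]$ works with the same argument, since all these slits contain $[\tfrac12,\tfrac34]$ for $n\ge4$ and the harmonic measure of that segment is monotone in the domain, so it is bounded below by its value for $U_{4}$.
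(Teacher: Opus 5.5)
Your argument is correct and essentially matches the paper's: a fixed slit that carries positive harmonic measure and stays at positive distance from $\partial U$ keeps $\mathbf{E}|\xi_n-\xi|^p$ above a constant, uniformly over all couplings and all $n$. The paper takes $U=D(0,2)$ and $U_n=D(0,2)\setminus[1,\tfrac32]$; that slit is already off the circle, so no sub-segment is needed, whereas your slit domain has the small bonus of being simply connected.

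Two side remarks need fixing. First, in your non-constant variant the domains $U_n=\mathbb{D}\setminus[\tfrac12,1-\tfrac1n]$ decrease in $n$. Since $V\subset W$ and $E\subset\partial V\cap\partial W$ give $\omega^0_V(E)\le\omega^0_W(E)$, the domain $U_4$ yields an upper bound, not a lower one. The lower bound comes from $U_0=\bigcap_n U_n$, namely $\omega^0_{U_n}([\tfrac12,\tfrac34])\ge\omega^0_{U_0}([\tfrac12,\tfrac34])=\delta'$. Second, in your probabilistic alternative the path must enter $B(\tfrac58,\tfrac1{16})$ before exiting $U_0$, not merely before exiting $\mathbb{D}$. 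Otherwise it may already have exited through the part $(\tfrac34,1)$ of the slit.
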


\begin{proof}
Let 
\[
U=D(0,2),\qquad U_{n}=D(0,2)\setminus[1,{\textstyle \frac{3}{2}}]\qquad\text{for all }n.
\]

\begin{center}
\begin{tikzpicture}[scale=0.7]
  \begin{scope}
    \draw (0,0) circle (2);
    \draw[->] (-2.4,0) -- (2.4,0) node[right] {$x$};
    \draw[->] (0,-2.4) -- (0,2.4) node[above] {$y$};
    \fill (0,0) circle (1pt);
    \node[below left] at (0,0) {$0$};
    \node at (0,-2.8) {$U=D(0,2)$};
  \end{scope}

  \begin{scope}[xshift=6cm]
    \draw (0,0) circle (2);
    \draw[->] (-2.4,0) -- (2.4,0) node[right] {$x$};
    \draw[->] (0,-2.4) -- (0,2.4) node[above] {$y$};
    \fill (0,0) circle (1pt);
    \node[below left] at (0,0) {$0$};
    \draw[very thick] (1,0) -- (1.5,0);
    \draw[fill=white] (1,0)   circle (1pt);
    \draw[fill=white] (1.5,0) circle (1pt);

    \node at (0,-2.8) {$U_n=D(0,2)\setminus[1,\tfrac{3}{2}]$};
  \end{scope}
\end{tikzpicture}
\end{center}
In particular 
\[
h(U)=h(U_{n})=+\infty\qquad\text{for all }n.
\]
Moreover, 
\[
\partial U\subset\partial U_{n},
\]
so 
\[
\sup_{u\in\partial U}\text{dist}(u,\partial U_{n})=0.
\]
We now and we show that $U_{n}$ does not converge to $U$ in the
$p$-Brownian sense. Let $(Z_{t})_{t}$ be planar Brownian motion
started at $0$, and let 
\[
\tau:=\tau_{U},\qquad\tau_{n}:=\tau_{U_{n}}.
\]
The slit $[1,\tfrac{3}{2}]$ is a non-polar portion of the boundary
of $U_{n}$. Hence its harmonic measure, seen from $0$, is positive:
\[
\delta:=\mathbf{P}\bigl(Z_{\tau_{n}}\in[1,\tfrac{3}{2}]\bigr)>0.
\]
On the other hand,
\[
Z_{\tau}\in\partial D(0,2)\qquad\text{a.s.}
\]
Every point of the slit $[1,\tfrac{3}{2}]$ is at Euclidean distance
at least $1/2$ from $\partial D(0,2)$. Therefore, for every coupling
of $Z_{\tau_{n}}$ and $Z_{\tau}$,
\[
|Z_{\tau_{n}}-Z_{\tau}|^{p}\ge\left(\frac{1}{2}\right)^{p}\quad\text{on the event }\{Z_{\tau_{n}}\in[1,\tfrac{3}{2}]\}.
\]
Consequently,

\[
\mathbf{E}\bigl(|Z_{\tau_{n}}-Z_{\tau}|^{p}\bigr)\ge\left(\frac{1}{2}\right)^{p}\delta>0\qquad\text{for all }n.
\]
Hence, $p$-Brownian convergence fails. 
\end{proof}

The Hausdorff distance between two subsets $A$ and $B$ of $\mathbb{C}$
(or, more generally, a metric space) is defined by 
\[
d_{H}(A,B)=\max(\sup_{a\in A}\text{dist}(a,B),\sup_{b\in B}\text{dist}(b,A)).
\]
The Hausdorff distance measures the worst possible distance from a
point of one set to the other set, in both directions. Proposition
\ref{prop:-There-exist} shows that one sided control of the distance
between $\partial U$ and $\partial U_{n}$ is not enough to imply
the $p$-Brownian convergence. The question about whether the condition
\begin{equation}
d_{H}(\partial U_{n},\partial U)\underset{n\to+\infty}{\longrightarrow}0\label{eq:d_H goes to zero}
\end{equation}
implies $p$-Brownian convergence is
a natural and nontrivial question. However, the next result shows
that \ref{eq:d_H goes to zero} and $p$-Brownian convergence are
not equivalent. 
\begin{thm}
\label{prop:brownian-not-hausdorff} For every $p>0$, there exists
a sequence of domains $(U_{n})_{n\ge1}$ converging to $U$ in the
$p$-Brownian sense while 
\[
d_{H}(\partial U_{n},\partial U)\not\to0.
\]
\end{thm}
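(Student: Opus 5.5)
The plan is to exploit the fact that polar sets, and more generally sets of vanishing capacity, are invisible to Brownian motion but not to the Hausdorff distance. Take $U=\mathbb{D}$ (so $h(U)=+\infty$ and every $p>0$ is admissible). Let
\[
U_{n}=\mathbb{D}\setminus\overline{D}\bigl(\tfrac12,r_{n}\bigr),\qquad r_{n}\downarrow0 .
\]
Using a puncture $U_{n}=\mathbb{D}\setminus\{\tfrac12\}$ would be even simpler, since a point is polar and then $\tau_{U_{n}}=\tau_{U}$ almost surely. However, to avoid any objection about degenerate boundaries, we work with small removed discs. In every case $\partial U_{n}$ contains a boundary component near $\tfrac12$, which lies at distance about $\tfrac12$ from $\partial\mathbb{D}$. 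Hence $d_{H}(\partial U_{n},\partial U)\ge\tfrac12-r_{n}\not\to0$.

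For the $p$-Brownian convergence we use the same-path coupling, as in Proposition \ref{prop:increasing-domains}. Let $Z$ be a planar Brownian motion from $0$, and set $\tau=\tau_{\mathbb{D}}$ and $\tau_{n}=\tau_{U_{n}}$. Then $\tau_{n}\le\tau$. Moreover, $\tau_{n}<\tau$ exactly on the event $A_{n}$ that $Z$ hits $\overline{D}(\tfrac12,r_{n})$ before leaving $\mathbb{D}$, and off $A_{n}$ the two exit pairs coincide. The bounds are
\[
\mathbf{E}|Z_{\tau_{n}}-Z_{\tau}|^{p}\le2^{p}\mathbf{P}(A_{n}),\qquad \mathbf{E}|\tau_{n}-\tau|^{p/2}\le\mathbf{E}\bigl(\tau^{p/2}\mathbf{1}_{A_{n}}\bigr).
\]
Since $\tau$ has moments of all orders on the bounded domain $\mathbb{D}$, the Cauchy--Schwarz inequality, or simply dominated convergence, reduces everything to showing $\mathbf{P}(A_{n})\to0$.

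The key step, and the only real obstacle, is this estimate. We argue as follows. The events $A_{n}$ decrease, and $\bigcap_{n}A_{n}$ is contained in the event that $Z$ hits the point $\tfrac12$ before $\tau$. That event has probability zero because points are polar for planar Brownian motion. Continuity from above then gives $\mathbf{P}(A_{n})\to0$. Alternatively, one can make this explicit: the function $\log(|z-\tfrac12|)/\log r_{n}$ compared on an annulus gives $\mathbf{P}(A_{n})\lesssim 1/|\log r_{n}|$.

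Combining the two bounds yields $\mathbf{E}|Z_{\tau_{n}}-Z_{\tau}|^{p}+\mathbf{E}|\tau_{n}-\tau|^{p/2}\to0$ for every $p>0$. Hence $U_{n}\to U$ in the $p$-Brownian sense, while the Hausdorff distance of the boundaries stays bounded below.
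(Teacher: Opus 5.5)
Your proof is correct. It shares the paper's skeleton: a same-path coupling, exit pairs that coincide off an event of vanishing probability, and domination. The construction and the key lemma, however, are different.

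\textbf{The paper's route.} The paper takes $U_n=\mathbb{D}\cup T_n$, where $T_n$ is a thin outward tentacle reaching radius $3$, so $U\subset U_n$. The exit pairs agree unless $Z$ leaves $\mathbb{D}$ through the attaching arc $A_n$. That arc has harmonic measure exactly $\delta_n/\pi$, because the exit law of the disc is uniform. The time term must then be dominated by $\tau_{D(0,3)}^{p/2}$.

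\textbf{Your route.} You remove a shrinking closed disc from the interior, so $U_n\subset U$ and the time term is dominated by $\tau$ itself. The vanishing of the bad event rests on polarity of points. Your quantitative alternative $\mathbf{P}(A_n)\lesssim 1/|\log r_n|$ is also right, by comparison with the annulus $r_n<|z-\tfrac12|<\tfrac32$, which contains $\mathbb{D}$ minus the small disc.

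\textbf{What each buys.} Your $U_n$ are doubly connected, so they lie outside $\mathcal{S}_p$, the class on which $\Lambda_p$ is a genuine metric (Theorem \ref{thm:Lambda is dist}). The statement as written concerns $p$-Brownian convergence of general planar domains, so your example is admissible. Indeed, as you note, the constant sequence $\mathbb{D}\setminus\{\tfrac12\}$ already suffices. That is exactly the degeneracy the paper points out after Theorem \ref{thm:Lambda is dist}, namely $\Lambda_p(\mathbb{D},\mathbb{D}\setminus\{\tfrac12\})=0$. The paper's tentacle gives a more informative conclusion. Even among bounded simply connected domains, convergence in $\Lambda_p$ does not force Hausdorff convergence of boundaries. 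In that class a compact obstacle in the interior is not allowed, since the complement must be connected, so the invisibility has to come from a narrow gate of small harmonic measure rather than from small capacity.
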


\begin{proof}
Let 
\[
U:=\mathbb{D},
\]
and define 
\[
T_{n}:=\{re^{i\theta}:\ 1-\eta\leq r<3,\ |\theta|<\delta_{n}\},\qquad U_{n}:=\mathbb{D}\cup T_{n},
\]
where $\delta_{n}\downarrow0$ and $\eta$ sufficiently small. 

\begin{center}
\begin{tikzpicture}[scale=2]
  \def\R{1}        
  \def\Rout{2.2}   
  \def\Rin{0.82}   
  \def\dn{8}       

  \fill[gray!12] (0,0) circle (\R);
  \fill[gray!12]
    (\dn:\R) -- (\dn:\Rout)
    arc[start angle=\dn, end angle=-\dn, radius=\Rout]
    -- (-\dn:\R)
    arc[start angle=-\dn, end angle=\dn, radius=\R]
    -- cycle;
  \draw[->, gray] (-1.3,0) -- (2.5,0) node[below right, black] {$x$};
  \draw[->, gray] (0,-1.3) -- (0,1.3)  node[above left,  black] {$y$};

  \draw[thick] (0,0) circle (\R);
  \draw[thick] (\dn:\R) -- (\dn:\Rout)
               arc[start angle=\dn, end angle=-\dn, radius=\Rout]
               -- (-\dn:\R);

  \draw[densely dotted, thick] (\dn:\Rin) -- (\dn:\R);
  \draw[densely dotted, thick] (-\dn:\Rin) -- (-\dn:\R);
  \draw[densely dotted, thick]
       (\dn:\Rin) arc[start angle=\dn, end angle=-\dn, radius=\Rin];

  \node at (-0.35, 0.35) {$\mathbb{D}$};
  \node at ( 1.55, 0.35) {$T_n$};
\end{tikzpicture}
\end{center}Each $U_{n}$ is a bounded simply connected domain containing $0$.
We first note that Hausdorff convergence fails. Indeed, the point
$3$ belongs to $\partial U_{n}$ for every $n$, while 
\[
\text{dist}(3,\partial\mathbb{D})=2.
\]
Hence 
\[
d_{H}(\partial U_{n},\partial U)\ge2\qquad\text{for all }n,
\]
and therefore
\[
d_{H}(\partial U_{n},\partial U)\not\to0.
\]
We now prove that $U_{n}\to U$ in the $p$-Brownian sense using the
same path coupling. Let $(Z_{t})_{t\ge0}$ be a planar Brownian motion
started at $0$, and define 
\[
\tau:=\tau_{U}=\tau_{\mathbb{D}},\qquad\tau_{n}:=\tau_{U_{n}}.
\]
This gives a coupling of the exit pairs. Let 
\[
A_{n}:=\{e^{i\theta}:\ |\theta|<\delta_{n}\}\subset\partial\mathbb{D}.
\]
Since Brownian motion started at $0$ exits the unit disk according
to normalized arclength measure on $\partial\mathbb{D}$, we have
\begin{equation}
\mathbf{P}\bigl(Z_{\tau}\in A_{n}\bigr)=\frac{|A_{n}|}{2\pi}=\frac{2\delta_{n}}{2\pi}=\frac{\delta_{n}}{\pi}\underset{n\to+\infty}{\longrightarrow}0.\label{eq:first}
\end{equation}
On the event 
\[
E_{n}:=\{Z_{\tau}\notin A_{n}\},
\]
the Brownian path exits $\mathbb{D}$ at a boundary point which does
not belong to the attaching arc of the tentacle $T_{n}$. Since
\[
\partial\mathbb{D}\setminus A_{n}\subset\partial U_{n},
\]
it follows that 
\begin{equation}
\tau_{n}=\tau\qquad\text{and}\qquad Z_{\tau_{n}}=Z_{\tau}\qquad\text{on }E_{n}.\label{eq:second}
\end{equation}
We first estimate the exit-position term. By \ref{eq:second}, 
\[
|Z_{\tau_{n}}-Z_{\tau}|^{p}=0\qquad\text{on }E_{n}.
\]
Moreover, since $U_{n}\subset D(0,3)$ and $U=\mathbb{D}$, we have
\[
|Z_{\tau_{n}}|\le3,\qquad|Z_{\tau}|\le1,
\]
hence 
\[
|Z_{\tau_{n}}-Z_{\tau}|^{p}\le4^{p}\,\mathbf{1}_{E^{c}_{n}}.
\]
Taking expectations and using \ref{eq:first}, we obtain 
\begin{equation}
\mathbf{E}(|Z_{\tau_{n}}-Z_{\tau}|^{p})\le4^{p}\mathbf{P}(E^{c}_{n})\underset{n\to+\infty}{\longrightarrow}0\label{eq:third}
\end{equation}
We next estimate the exit-time term. Since $U\subset U_{n}$, we have
$\tau\le\tau_{n}$. Thus, on $E^{c}_{n}$, 
\[
|\tau_{n}-\tau|^{\frac{p}{2}}=(\tau_{n}-\tau)^{\frac{p}{2}}\le\tau^{\frac{p}{2}}_{n}.
\]
Because $U_{n}\subset D(0,3)$, 
\[
\tau_{n}\le\tau_{D(0,3)},
\]
and therefore 
\[
|\tau_{n}-\tau|^{\frac{p}{2}}\le\tau^{\,\frac{p}{2}}_{D(0,3)}\mathbf{1}_{E^{c}_{n}}\leq\tau^{\,\frac{p}{2}}_{D(0,3)}.
\]
The dominated convergence theorem yields 
\[
\mathbf{E}(|\tau_{n}-\tau|^{\frac{p}{2}})\to0.
\]
We see that under this coupling 
\[
\mathbf{E}(|Z_{\tau_{n}}-Z_{\tau}|^{p})+\mathbf{E}(|\tau_{n}-\tau|^{\frac{p}{2}})\to0.
\]
Hence $U_{n}\to U$ in the $p$-Brownian sense. 
\end{proof}

\section{A Brownian-Wasserstein distance }\label{sec:A-Brownian-Wasserstein-distance}

In the previous section, $p$-Brownian convergence is treated as a mode of convergence. Nevertheless, it is more convenient to exhibit a kind of metric out of it. That is, we pair such a convergence with a canonical metric between domains. The mechanism is by placing the Brownian convergence within the framework of transport theory. So first, we recall some concepts from the transport
theory.

Let $\mu_{1}$ and $\mu_{2}$ be probability measures on measurable
spaces $X$ and $Y$, respectively. A coupling of $\mu_{1}$ and $\mu_{2}$
is a pair of random variables $(\xi,\theta)$, defined on a common
probability space, such that
\[
\xi\sim\mu_{1},\qquad\theta\sim\mu_{2}.
\]
Equivalently, a coupling is a probability measure $\pi$ on $X\times Y$
whose marginals are $\mu_{1}$ and $\mu_{2}$. In transport theory, a coupling is also referred to as a transport
plan. The set of all such transport plans is denoted by $\Pi(\mu_{1},\mu_{2})$.
Given a cost function $c:X\times Y\to[0,\infty]$, the Kantorovich
transport problem is to minimize
\[
\int_{X\times Y}c(x,y)\,d\pi(x,y)
\]
over all $\pi\in\Pi(\mu_{1},\mu_{2})$. Equivalently, if $(\xi,\theta)$
has joint law $\pi$, the objective is to optimize the value of the expectation
\[
\mathbf{E}(c(\xi,\theta)).
\]
We refer the reader to \cite{ambrosio2021lectures,villani2021topics}
for a concise treatments of optimal transport theory.

As mentioned earlier, our transportation problem is motivated by the $p$-Brownian convergence.
Fix two domains $U$ and $V$, run a standard planar Brownian motion
$(Z_{t})_{t}$ and record the exit times from $U$ and $V$, i.e.
\[
\tau_{U}:=\inf\{t\mid Z_{t}\not\in U\},\,\,\,\tau_{V}:=\inf\{t\mid Z_{t}\not\in V\}.
\]
That is, a domain $U$ give rise to what we shall call its associated
``Brownian exit pair''
\[
(Z_{\tau_{U}},\tau_{U}).
\]
Denote by $\mu_{U}$ (resp. $\mu_{V}$) the joint law of $(Z_{\tau_{U}},\tau_{U})$
(resp. $(Z_{\tau_{V}},\tau_{V})$). Our set of transportation plans
is then 
\[
\Pi(\mu_{U},\mu_{V})
\]
and we will measure the cost of transporting one exit pair into the
other. In this language, different couplings correspond to different
ways of relating two Brownian exit pairs. The same-path coupling corresponds
to driving two Brownian particles by the same Brownian noise until they leave
their respective domains, while the independent coupling corresponds
to two particles evolving independently. The comparison between these
two couplings is one of the main themes of the paper. Therefore, the resulting
theory connects Brownian convergence of domains with ideas from optimal
transport. 
Let $E=\mathbb{C}\times[0,\infty)$ and $p\ge1$. Consider
the function $d_{p}$ on $E\times E$ defined by
\[
d_{p}\bigl((z,t),(w,s)\bigr):=\bigl(|z-w|^{p}+|t-s|^{\frac{p}{2}}\bigr)^{\frac{1}{p}}.
\]
The metric $d_{p}$ is the $\ell^{p}$-product version of the standard
parabolic space-time metric. The cases $p=1$ and $p=2$ are common
in parabolic PDE theory \cite{PolacikQuittnerSouplet2007}, while this general $p$-form
follows from the the $p$-product construction for metric spaces.
What is specific to the present work is its use as the ground metric
for transporting Brownian exit pairs. Our cost function, as suggested
by the $p$-Brownian convergence, is 
\[
c_{p}\bigl((z,t),(w,s)\bigr):=d_{p}\bigl((z,t),(w,s)\bigr)^{p}=\bigl(|z-w|^{p}+|t-s|^{\frac{p}{2}}\bigr).
\]
The subscript $p$ is omitted when the context is clear. Define the
transport cost of $\pi$ (transport in short) by 
\[
T^{\pi}_{p}=\left(\int_{\left(\mathbb{C}\times[0,\infty)\right)^{2}}c\bigl((z,t),(w,s)\bigr)\,d\pi\right)^{1/p}.
\]
By the elementary analysis techniques, there is a constant $C_{p}>0$
such that
\begin{equation}
c((z,t),(w,s))\le C_{p}\left(|z|^{p}+t^{p/2}+|w|^{p}+s^{p/2}\right).\label{eq:C_p}
\end{equation}
Therefore a uniform estimate for $T^{\pi}_{p}$ is 
\[
T^{\pi}_{p}\apprle\left(\mathbf{E}(|Z_{\tau_{U}}|^{p})+\mathbf{E}(\tau^{\frac{p}{2}}_{U})+\mathbf{E}(|Z_{\tau_{V}}|^{p})+\mathbf{E}(\tau^{\frac{p}{2}}_{V})\right)^{1/p}.
\]

\begin{defn}
Let $p\ge1$, and let $U,V$ be two planar domains containing $0$,
such that 
\begin{equation}
\mathbf{E}(\tau^{\frac{p}{2}}_{U})+\mathbf{E}(|Z_{\tau_{U}}|^{p})<+\infty\,\,\,\,\,\text{and}\,\,\,\,\mathbf{E}(|Z_{\tau_{V}}|^{p})+\mathbf{E}(\tau^{\frac{p}{2}}_{V})<+\infty.\label{eq:existing assumption}
\end{equation}
We define the optimal Brownian transport cost by
\[
\Lambda_{p}(U,V):=\inf_{\pi\in\Pi(\mu_{U},\mu_{V})}T^{\pi}_{p}.
\]
Similarly, we define the supremal Brownian transport cost by
\[
\Phi_{p}(U,V):=\sup_{\pi\in\Pi(\mu_{U},\mu_{V})}T^{\pi}_{p}.
\]
\end{defn}

The optimal Brownian transport cost is simply the $p$-Wasserstein
distance between the joint exit laws of $U$ and $V$. Since our space-time
framework $E$ is Polish and $$c=d^{p}_{p}$$ is continuous and nonnegative,
the usual compactness and lower-semicontinuity argument gives the
existence of an optimal coupling in $\Pi(\mu_{U},\mu_{V})$ \cite{villani2021topics}.
The existence of a supremal transport plan needs attention. Although
$c$ is continuous, it is not bounded. Define
\[
A(z,t):=C_{p}(|z|^{p}+t^{p/2}),\qquad B(w,s):=C_{p}(|w|^{p}+s^{p/2}),
\]
and
\begin{equation}
h((z,t),(w,s)):=A(z,t)+B(w,s)-c((z,t),(w,s))\label{eq:h for Phi}
\end{equation}
where $C_{p}$ is the constant in \ref{eq:C_p}. Then $h\ge0$, and
$h$ is lower semicontinuous. Moreover, for every $\pi\in\Pi(\mu_{U},\mu_{V})$,
\[
\int cd\pi=\int Ad\mu_{U}+\int Bd\mu_{V}-\int hd\pi.
\]
The first two terms are finite and independent of $\pi$. Hence maximizing
$\int c\,d\pi$ over $\Pi(\mu,\nu)$ is equivalent to minimizing $\int h\,d\pi$
over $\Pi(\mu,\nu)$. Since $E$ is a Polish space, the set $\Pi(\mu_{U},\mu_{V})$
is narrowly compact. Also, because $h$ is nonnegative and lower semicontinuous,
the map
\[
\pi\longmapsto\int h\,d\pi
\]
is lower semicontinuous. . We therefore reduce the maximization
problem to a standard minimization problem with a nonnegative lower
semicontinuous cost. The minimum of $\int h\,d\pi$
over $\Pi(\mu,\nu)$ is attained. Consequently the original supremum
is also attained.

The conditions on the moments growth under which optimal transport
exists deserve to be examined and discussed in some detail. In fact,
by Burkholder equivalence \ref{burkholder equivalence}, one always
has
\begin{equation}
\mathbf{E}(\tau^{\frac{p}{2}}_{U})<+\infty\Rightarrow\mathbf{E}(|Z_{\tau_{U}}|^{p})<+\infty.\label{time implies position}
\end{equation}
So we can drop the moment growth of the exit position. However, when
the exit time satisfies
\begin{equation}
\mathbf{E}(\log(1+\tau_{U}))<+\infty\label{burkholder condition}
\end{equation}
then the converse of \ref{time implies position} becomes true. Hence
it is enough to check only the moment growth of the exit position,
which is often easier to handle. Condition \ref{burkholder condition}
is fulfilled for simply connected domains \cite{boudabra2021some},
which constitute the main setting of the present paper. An situation
where the converse of \ref{time implies position} fails is, for example,
\[
U=\mathbb{C}\setminus\{|z-2|<1\}
\]
 (see \cite{boudabra2021some}). Recent work \cite{becher2026burkholder}
has also studied condition $\ref{burkholder condition}$ from the
viewpoint of analytic function theory. More precisely, if $U$ is
the range of a proper map $f:\mathbb{D}\to\mathbb{C}$ and satisfies
$\ref{burkholder condition}$, then $f$ belongs to the Smirnov
class, a functional space.\\
\\
A natural thought is that the temporal component might be controlled
by the spatial component through a Burkholder-{}Davis-{}Gundy type
inequality. This is not the case in the present transport framework.
BDG applies to increments of a single Brownian path between two stopping
times: if $\sigma\le\tau$, then the elapsed time $\tau-\sigma$ is
the quadratic variation of the Brownian increment on the interval
$[\sigma,\tau]$. The BDG says that 

\[
\mathbf{E}\left(\sup_{\sigma\le t\le\tau}|B_{t}-B_{\sigma}|^{p}\right)\asymp\mathbf{E}|\tau-\sigma|^{p/2}.
\]
In our setting, however, a coupling of exit pairs need not arise from
two points on the same Brownian trajectory. The variables
\[
(\xi,\theta)=(B_{\tau_{U}},\tau_{U}),\qquad(\xi',\theta')=(Z_{\tau_{V}},\tau_{V})
\]
are coupled abstractly through a transport plan, and there is in general
no Brownian increment connecting space to time. Hence there is no quadratic
variation identity relating $|\xi-\xi'|^{p}$ to $|\theta-\theta'|^{p/2}$.
This separation is already visible in the unit disc. For $\mathbb{D}$,
the exit position and exit time are independent \cite{port2012brownian}.
We may therefore couple two exit pairs by taking the same exit angle
but independent exit times:
\[
\xi=\xi',\,\,\,\,\,\,\theta'\independent\theta
\]
where $\theta'$ and $\theta$ are independent copies of $\tau_{\mathbb{D}}$.
Then both marginals are the correct Brownian exit-pair law of the
disc, and yet
\[
|\xi-\widetilde{\xi}|=0
\]
while
\[
|\theta-\theta'|
\]
is nontrivial. Thus the spatial component alone does not determine,
or even control, the temporal discrepancy under general couplings.
The time coordinate is therefore genuine additional information in
the Brownian exit-pair transport problem. This independent behavior
of the spatial and temporal components will appear again in Theorem
\ref{thm:same path vs ind path}.

In the definition of $\Lambda_{p}$, the pair of domains $U$ and
$V$ are not required to be simply connected. However, in order to
avoid pathological behavior of the boundaries, we assume always that
the inputs of $\Lambda_{p}$ belong to the set 
\[
\mathcal{S}_{p}=\{U:\text{proper simply connected domain of }\mathbb{C}\mid0\in U,\,\,\mathbf{E}(|Z_{\tau_{U}}|^{p})<+\infty\}.
\]
One also has
\[
\mathcal{S}_{p}=\{U:\text{proper simply connected domain of }\mathbb{C}\mid0\in U,\,\,\mathbf{E}(\tau^{p/2}_{U})<+\infty\}.
\]
It is clear then
\[
\mathcal{S}_{p+q}\subset\mathcal{S}_{p}
\]
for $q>0$, and for any $p<\frac{1}{2}$, 
\[
\mathcal{S}_{p}=\{U:\text{proper simply connected domain containing \ensuremath{0}}\}
\]
which is simply a reformulation of Theorem \ref{thm:hardy lower bound}.

In terms of measures, $\Lambda_{p}$ defines a distance, which is merely the Wasserstein distance. However,
since our measures are induced by domains, then it is natural to investigate
whether $\Lambda_{p}$ still defines a distance between domains or
not. 
\begin{thm}
\label{thm:Lambda is dist} Let $p\geq1$. $\Lambda_{p}$ defines
a distance on $\mathcal{S}_{p}$. Moreover, $\Lambda_{p}$ characterizes
the $p$-Brownian convergence, i.e.
\[
U_{n}\to U\text{ in the \ensuremath{p}-Brownian sense if and only if }\Lambda_{p}(U,U_{n})\to0.
\]
\end{thm}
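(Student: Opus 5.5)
The plan is to use the observation stated right after the definition: $\Lambda_{p}(U,V)$ is the $p$-Wasserstein distance between $\mu_{U}$ and $\mu_{V}$ on the space-time $E=\mathbb{C}\times[0,\infty)$, with ground metric $d_{p}$. Most metric axioms then come from standard optimal transport theory, and the real work is definiteness on the level of domains.

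\textbf{Metric structure.} First I check that $d_{p}$ is a genuine metric on $E$ for $p\ge1$. The map $(t,s)\mapsto|t-s|^{1/2}$ is a metric on $[0,\infty)$, since $\sqrt{|t-s|}\le\sqrt{|t-r|}+\sqrt{|r-s|}$. The $\ell^{p}$-product of two metrics is a metric by Minkowski's inequality, so $d_{p}$ is a metric. With this metric $E$ is Polish, because $d_{p}$ induces the usual topology and is complete.

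For $U\in\mathcal{S}_{p}$ the measure $\mu_{U}$ has finite $p$-th moment with respect to $d_{p}$. This uses the equivalent description of $\mathcal{S}_{p}$ through $\mathbf{E}(\tau_{U}^{p/2})$, together with \ref{time implies position} and \ref{eq:C_p}. So $\Lambda_{p}(U,V)<\infty$ on $\mathcal{S}_{p}$.

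Symmetry follows by swapping the marginals of a plan. The triangle inequality is the usual one for $W_{p}$: glue optimal plans $\pi_{UV}$ and $\pi_{VW}$ along their common marginal $\mu_{V}$ (gluing lemma), then apply Minkowski in $L^{p}$ to $d_{p}$.

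\textbf{Definiteness (main obstacle).} Suppose $\Lambda_{p}(U,V)=0$. Then $\mu_{U}=\mu_{V}$, since $W_{p}$ is a metric on measures. Taking first marginals gives $\omega^{0}_{U}=\omega^{0}_{V}$, and I only need this spatial information to recover the domain.

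The key fact is that for a proper simply connected domain the closed support of $\omega^{0}_{U}$ is all of $\partial U$. To see this, note that every $\zeta\in\partial U$ is a regular boundary point, because the complement of $U$ is connected and contains more than one point. Regularity gives $\omega^{z}_{U}(D(\zeta,r))\to1$ as $z\to\zeta$, so this quantity is positive for some $z\in U$. Harnack's inequality, applied along a chain in the connected set $U$, transfers the positivity to $z=0$.

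Hence $\partial U=\operatorname{supp}\omega^{0}_{U}=\operatorname{supp}\omega^{0}_{V}=\partial V$. Now $U$ is open, connected and disjoint from $\partial U$. It is also relatively closed in $\mathbb{C}\setminus\partial U$, since its closure is $U\cup\partial U$. So $U$ is exactly the connected component of $\mathbb{C}\setminus\partial U$ containing $0$. The same holds for $V$, and therefore $U=V$.

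The delicate step is the support claim. It rests on boundary regularity and on the closed support being taken in $\mathbb{C}$; if $U$ is unbounded one works with $\partial U\cap\mathbb{C}$. Where the simple connectivity assumption enters, I will make it explicit.

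\textbf{Characterization of convergence.} If $U_{n}\to U$ in the $p$-Brownian sense, the coupling in Definition \ref{p-conv} has law $\pi_{n}\in\Pi(\mu_{U_{n}},\mu_{U})$. Its cost is exactly $\mathbf{E}|\xi_{n}-\xi|^{p}+\mathbf{E}|\theta_{n}-\theta|^{p/2}$, so
\[
\Lambda_{p}(U_{n},U)^{p}\le\bigl(T^{\pi_{n}}_{p}\bigr)^{p}\to0 .
\]
Conversely, if $\Lambda_{p}(U_{n},U)\to0$, take an optimal plan $\pi_{n}$, which exists as noted after the definition. Realize $\pi_{n}$ as a random vector $((\xi_{n},\theta_{n}),(\xi,\theta))$; the common probability space can be taken as a product space, or the vectors can be built from a single uniform variable. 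This vector has the required marginals, and its cost equals $\Lambda_{p}(U_{n},U)^{p}\to0$, which is precisely $p$-Brownian convergence.
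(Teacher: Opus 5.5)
Your proposal is correct and follows the paper's proof essentially step for step. The metric axioms come from the Wasserstein structure via gluing and Minkowski, definiteness comes from equality of harmonic measures together with $\operatorname{supp}\omega^{0}_{U}=\partial U$, and the convergence characterization comes from (near-)optimal couplings. Beyond the paper, you also justify the support property by regularity plus Harnack, and the step $\partial U=\partial V\Rightarrow U=V$, both of which the paper only cites or asserts. In the regularity step, "the complement of $U$ is connected" must be read in the Riemann sphere, since for a strip the complement in $\mathbb{C}$ is disconnected.
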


\begin{proof}
We split the proof into two parts.
\begin{enumerate}
\item $\Lambda_{p}$ defines a distance on $\mathcal{S}_{p}$. Since $c$
is symmetric, for $U,V,W\in\mathcal{S}_{p}$, we immediately have
\[
\Lambda_{p}(U,V)\geq0,
\]
\[
\Lambda_{p}(U,V)=\Lambda_{p}(V,U),
\]
and 
\[
\Lambda_{p}(U,W)\leq\Lambda_{p}(U,V)+\Lambda_{p}(V,W).
\]
For the sake of completeness, we recall the proof of the triangle
inequality. Let $\varepsilon>0$. Choose couplings 
\[
\pi_{UV}\in\Pi(\mu_{U},\mu_{V}),\qquad\pi_{VW}\in\Pi(\mu_{V},\mu_{W})
\]
such that 
\[
\left(\int c(a,b)\,d\pi_{UV}(a,b)\right)^{\frac{1}{p}}\leq\Lambda_{p}(U,V)+\varepsilon,
\]
and 
\[
\left(\int c(b,c)\,d\pi_{VW}(b,c)\right)^{\frac{1}{p}}\leq\Lambda_{p}(V,W)+\varepsilon.
\]
By the so-called ``gluing lemma'', there exists a probability measure
$\gamma$ on $E^{3}$ whose $(a,b)$-marginal is $\pi_{UV}$ and whose
$(b,c)$-marginal is $\pi_{VW}$. The $(a,c)$-marginal of $\gamma$
is a coupling of $\mu_{U}$ and $\mu_{W}$. Therefore, using the triangle
inequality for $\rho$ along with Minkowski's inequality, we get 
\[
\begin{aligned}\Lambda_{p}(U,W) & \leq\left(\int_{E^{3}}c(a,c)\,d\gamma(a,b,c)\right)^{\frac{1}{p}}\\
 & \leq\left(\int_{E^{3}}\bigl(d_{p}(a,b)+d_{p}(b,c)\bigr)^{p}\,d\gamma(a,b,c)\right)^{\frac{1}{p}}\\
 & \leq\left(\int_{E^{3}}c(a,b)\,d\gamma(a,b,c)\right)^{\frac{1}{p}}+\left(\int_{E^{3}}c(b,c)\,d\gamma(a,b,c)\right)^{\frac{1}{p}}\\
 & =\left(\int_{E^{2}}c(a,b)\,d\pi_{UV}(a,b)\right)^{\frac{1}{p}}+\left(\int_{E^{2}}c(b,c)\,d\pi_{VW}(b,c)\right)^{\frac{1}{p}}\\
 & \leq\Lambda_{p}(U,V)+\Lambda_{p}(V,W)+2\varepsilon.
\end{aligned}
\]
Letting $\varepsilon\downarrow0$ gives the desired triangle inequality.
It remains to prove separation: if 
\[
\Lambda_{p}(U,V)=0,
\]
then $U=V$. If $\Lambda_{p}(U,V)=0$ then
\[
\mu_{U}=\mu_{V}.
\]
Taking spatial marginals gives 
\[
\mathcal{L}(B_{\tau_{U}})=\mathcal{L}(B_{\tau_{V}}).
\]
These spatial marginals are precisely the harmonic measures of $U$
and $\ensuremath{V}$seen from $0$ \cite{bass1994probabilistic,chung2012brownian}.
Thus 
\[
\omega^{0}_{U}=\omega^{0}_{V}.
\]
We now use the standard fact that if $D\subsetneq\mathbb{C}$ is simply
connected and $z_{0}\in D$, then 
\begin{equation}
\text{supp}(\omega^{z_{0}}_{D})=\partial D.\label{harmonic measure support}
\end{equation}
A nice picture-based proof of \ref{harmonic measure support} is in
\cite{2018remark}. Applying this to $U$ and $V$, we get $\partial U=\partial V$.
Since $U,V$ are simply connected with a common point (the origin)
then 
\[
U=V.
\]
Thus $\Lambda_{p}$ is nonnegative, symmetric, satisfies the triangle
inequality, and separates points of $\mathcal{S}_{p}$. Hence $\Lambda_{p}$
defines a distance on $\mathcal{S}_{p}$.
\item $\Lambda_{p}$ characterizes the $p$-Brownian convergence. The direct
implication is obvious. Now, suppose that 
\[
\Lambda_{p}(U,U_{n})\to0.
\]
Hence, there is always a coupling $\pi_{n}$ in $\Pi(\mu_{U},\mu_{U_{n}})$
such that 
\[
\int_{\left(\mathbb{C}\times[0,\infty)\right)^{2}}\rho\bigl((z,t),(w,s)\bigr)^{p}\,d\pi_{n}\leq\Lambda^{p}_{p}(U,U_{n})+\frac{1}{n}.
\]
taking $n$ to infinity proves the claim.
\end{enumerate}
\end{proof}

Note that the separation property of $\Lambda_{p}$ fails out of the
category of simply connected domains. To see this, observe that 
\[
\Lambda_{p}(\mathbb{D},\mathbb{D}\setminus\{{\textstyle \frac{1}{2}}\})=0.
\]

The next result connects the theory of Hardy spaces and the distance
$\Lambda_{p}$. 
\begin{thm}[Hardy-space control of the Brownian transport distance]
Let $p\ge1$, and let $U,V\in\mathcal{S}_{p}$ . Let 
\[
f:\mathbb{D}\to U,\qquad g:\mathbb{D}\to V
\]
be the normalized conformal maps satisfying
\[
f(0)=g(0)=0,\,\,f(\mathbb{D})=U,\,\,g(\mathbb{D})=V.
\]
Then there exists a constant $C_{p}>0$, depending only on $p$ (and
on the normalization convention for planar Brownian motion), such
that
\[
\Lambda_{p}(U,V)^{p}\le\|f-g\|^{p}_{\mathbf{H}^{p}}+C_{p}\|f-g\|^{\frac{p}{2}}_{\mathbf{H}^{p}}\|f+g\|^{\frac{p}{2}}_{\mathbf{H}^{p}}.
\]
\end{thm}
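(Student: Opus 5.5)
Since $\Lambda_p(U,V)$ is an infimum over couplings, it suffices to produce one explicit coupling whose transport cost obeys the bound. I will use conformal invariance. Let $(B_t)_{t\ge0}$ be a planar Brownian motion started at $0$, and let $\sigma=\tau_{\mathbb{D}}$ be its exit time from the unit disc. By L\'evy's conformal invariance, $f(B_t)$ and $g(B_t)$ for $t<\sigma$ are time-changed Brownian motions started at $0$. Their exit points from $U$ and $V$ are $f(B_\sigma)$ and $g(B_\sigma)$, understood as radial boundary values. Their exit times are
\[
\theta_U=\int_0^{\sigma}|f'(B_s)|^2\,ds,\qquad \theta_V=\int_0^{\sigma}|g'(B_s)|^2\,ds.
\]
So $(f(B_\sigma),\theta_U)\sim\mu_U$ and $(g(B_\sigma),\theta_V)\sim\mu_V$, and this is a legitimate element of $\Pi(\mu_U,\mu_V)$. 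It remains to bound the two cost terms for this coupling.

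\emph{Spatial term.} Since $B_\sigma$ is uniformly distributed on $\partial\mathbb{D}$, we have $\mathbf{E}|f(B_\sigma)-g(B_\sigma)|^p=\frac{1}{2\pi}\int_0^{2\pi}|f-g|^p(e^{i\theta})\,d\theta$. Because $f-g\in\mathbf{H}^p$, this equals $\|f-g\|^p_{\mathbf{H}^p}$, using the standard fact that the Hardy norm equals the $L^p$ norm of the boundary values. This gives the first summand with constant $1$.

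\emph{Temporal term.} Write
\[
\theta_U-\theta_V=\int_0^\sigma\bigl(|f'|^2-|g'|^2\bigr)(B_s)\,ds=\int_0^\sigma\mathrm{Re}\bigl((f'-g')\overline{(f'+g')}\bigr)(B_s)\,ds,
\]
so that
\[
|\theta_U-\theta_V|\le\int_0^\sigma|F'(B_s)|\,|G'(B_s)|\,ds,\qquad F=f-g,\ G=f+g.
\]
By Cauchy--Schwarz in $ds$,
\[
|\theta_U-\theta_V|\le\Bigl(\int_0^\sigma|F'(B_s)|^2ds\Bigr)^{1/2}\Bigl(\int_0^\sigma|G'(B_s)|^2ds\Bigr)^{1/2}=\langle F(B)\rangle_\sigma^{1/2}\,\langle G(B)\rangle_\sigma^{1/2},
\]
where the brackets are the quadratic variations of the conformal martingales $F(B_t)$ and $G(B_t)$, up to the normalization constant of planar Brownian motion. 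Raise this to the power $p/2$ and apply Cauchy--Schwarz in expectation:
\[
\mathbf{E}|\theta_U-\theta_V|^{p/2}\le\bigl(\mathbf{E}\langle F(B)\rangle_\sigma^{p/2}\bigr)^{1/2}\bigl(\mathbf{E}\langle G(B)\rangle_\sigma^{p/2}\bigr)^{1/2}.
\]
Then apply the Burkholder--Davis--Gundy inequality to each factor, with $F(0)=G(0)=0$, which gives $\mathbf{E}\langle F(B)\rangle_\sigma^{p/2}\le c_p\,\mathbf{E}\sup_{t\le\sigma}|F(B_t)|^p$. Finally, Burkholder's maximal inequality for $\mathbf{H}^p$ functions, i.e.\ the quantitative form of \ref{burkholder equivalence} (equivalently the Hardy--Littlewood nontangential maximal theorem), gives $\mathbf{E}\sup_{t\le\sigma}|F(B_t)|^p\le c_p'\|F\|^p_{\mathbf{H}^p}$, and likewise for $G$. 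Combining these yields $C_p\|f-g\|^{p/2}_{\mathbf{H}^p}\|f+g\|^{p/2}_{\mathbf{H}^p}$.

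The main obstacle is the last chain of inequalities: passing from quadratic variation to the boundary Hardy norm with constants depending only on $p$. I will run it through BDG plus Burkholder's maximal theorem rather than attempt a direct Littlewood--Paley area-integral estimate. I also need to justify that the time integrals equal the exit times and that the martingales $F(B_{t\wedge\sigma})$ are genuine $L^p$-bounded martingales. To do this, I will first work with the dilates $f_r(z)=f(rz)$ and $g_r(z)=g(rz)$, stopping at exit from $D(0,r)$, and then let $r\uparrow1$ using monotone convergence and Fatou. Finiteness of all quantities is guaranteed because $U,V\in\mathcal{S}_p$, which by \ref{burkholder equivalence} means $f,g\in\mathbf{H}^p$, and hence $f\pm g\in\mathbf{H}^p$.
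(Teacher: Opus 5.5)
Your proposal is correct, and it is essentially the canonical-coupling argument that the paper invokes from the Skorokhod-stability work but does not write out. You drive both exit pairs by the conformal images $f(B_t)$, $g(B_t)$ of one disc Brownian motion, which gives the spatial term exactly as $\|f-g\|^{p}_{\mathbf{H}^{p}}$. You then control the time gap through $|f'|^{2}-|g'|^{2}=\Re\bigl((f'-g')\overline{(f'+g')}\bigr)$, Cauchy--Schwarz, BDG and the Brownian maximal inequality for $\mathbf{H}^{p}$, and this chain is precisely what yields the factor $\|f-g\|^{p/2}_{\mathbf{H}^{p}}\|f+g\|^{p/2}_{\mathbf{H}^{p}}$.
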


The proof is an adaptation of the canonical-coupling argument used
in the proof of the stability theorem for the planar Skorokhod embedding
problem in \cite{boudabra2026stability}, so we deemed to not include
it. 

As mentioned in section \ref{sec:A-Brownian-Wasserstein-distance},
special attention is given to the competition between the same-path
and independent couplings. In the case $p=2$, this competition admits
explicit formulas in terms of Brownian exit times, allowing us to
identify regimes where one coupling is cheaper than the other and
leading to the notion of a Brownian coupling threshold, a concept that sill be formalized in the last section. Let $(B_{t})_{t\ge0}$
and $(Z_{t})_{t\geq0}$ be two independent coupled planar Brownian
motions started at $0$. Define the same-path transport by
\[
T^{\mathrm{same}}_{p}(U,V):=\left(\mathbf{E}\left(|B_{\tau^{B}_{U}}-B_{\tau^{B}_{V}}|^{p}+|\tau^{B}_{U}-\tau^{B}_{V}|^{p/2}\right)\right)^{1/p}.,
\]
and define the independent-path cost by
\[
T^{\mathrm{ind}}_{p}(U,V):=\left(\mathbf{E}\left(|B_{\tau^{B}_{U}}-Z_{\tau^{Z}_{V}}|^{p}+|\tau^{B}_{U}-\tau^{Z}_{V}|^{p/2}\right)\right)^{1/p}.
\]
The question we address is whether one can compare the same-path and
independent-path costs via a geometric criterion. In general this problem
is not elementary, but for $p=2$ the comparison admits a simple expression
in terms of exit times. More precisely, we characterize the sign of

\[
T^{\mathrm{ind}}_{2}(U,V)-T^{\mathrm{same}}_{2}(U,V).
\]

\begin{thm}
\label{thm:same path vs ind path} Let $U,V\in\mathcal{S}_{2}$. Let
$(B_{t})_{t\ge0}$ and $(Z_{t})_{t\geq0}$ be two independent planar
Brownian motions started at $0$, and let $\theta$ and $S$ be independent
random variables such that
\[
\theta\sim\tau_{U},\qquad S\sim\tau_{V}.
\]
Then
\[
T^{\mathrm{ind}}_{2}(U,V)^{2}-T^{\mathrm{same}}_{2}(U,V)^{2}=2\left(3\,\mathbf{E}(\tau_{U\cap V})-\mathbf{E}(\theta\wedge S)\right).
\]
In particular,
\[
T^{\mathrm{same}}_{2}(U,V)<T^{\mathrm{ind}}_{2}(U,V)\quad\Longleftrightarrow\quad3\,\mathbf{E}(\tau_{U\cap V})>\mathbf{E}(\theta\wedge S).
\]
\end{thm}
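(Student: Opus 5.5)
The plan is to compute both squared costs explicitly for $p=2$, where the cost is $c\bigl((z,t),(w,s)\bigr)=|z-w|^{2}+|t-s|$, and reduce everything to first moments of exit times. The only probabilistic inputs are the Wald identities for planar Brownian motion: for a stopping time $\sigma$ with $\mathbf{E}(\sigma)<+\infty$,
\[
\mathbf{E}(B_{\sigma})=0,\qquad\mathbf{E}\bigl(|B_{\sigma}|^{2}\bigr)=2\,\mathbf{E}(\sigma).
\]
The second identity holds because $|B_{t}|^{2}-2t$ is a martingale. We will also use its strong-Markov version for increments: if $\sigma\le\rho$ are stopping times with $\mathbf{E}(\rho)<+\infty$, then $\mathbf{E}\bigl(|B_{\rho}-B_{\sigma}|^{2}\bigr)=2\,\mathbf{E}(\rho-\sigma)$. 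Since $U,V\in\mathcal{S}_{2}$, we have $\mathbf{E}(\tau_{U})$ and $\mathbf{E}(\tau_{V})$ finite. Hence all stopping times involved are integrable, including $\tau_{U\cap V}=\tau^{B}_{U}\wedge\tau^{B}_{V}$, which needs no simple connectivity of $U\cap V$.

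\textbf{Same-path cost.} Put $\sigma:=\tau^{B}_{U}\wedge\tau^{B}_{V}=\tau^{B}_{U\cap V}$ and $\rho:=\tau^{B}_{U}\vee\tau^{B}_{V}$. Then
\[
|B_{\tau^{B}_{U}}-B_{\tau^{B}_{V}}|=|B_{\rho}-B_{\sigma}|\quad\text{and}\quad|\tau^{B}_{U}-\tau^{B}_{V}|=\rho-\sigma .
\]
The increment identity then gives $T^{\mathrm{same}}_{2}(U,V)^{2}=2\,\mathbf{E}(\rho-\sigma)+\mathbf{E}(\rho-\sigma)=3\,\mathbf{E}(\rho-\sigma)$. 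Since $\rho+\sigma=\tau_{U}+\tau_{V}$, this equals
\[
3\bigl(\mathbf{E}(\tau_{U})+\mathbf{E}(\tau_{V})-2\,\mathbf{E}(\tau_{U\cap V})\bigr).
\]

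\textbf{Independent cost.} Expand $|B_{\tau^{B}_{U}}-Z_{\tau^{Z}_{V}}|^{2}=|B_{\tau^{B}_{U}}|^{2}+|Z_{\tau^{Z}_{V}}|^{2}-2\,\mathrm{Re}\bigl(B_{\tau^{B}_{U}}\overline{Z_{\tau^{Z}_{V}}}\bigr)$. The cross term has zero expectation, by independence and $\mathbf{E}(B_{\tau^{B}_{U}})=0$. The spatial part therefore contributes $2\,\mathbf{E}(\tau_{U})+2\,\mathbf{E}(\tau_{V})$. For the temporal part, $(\tau^{B}_{U},\tau^{Z}_{V})$ has the law of $(\theta,S)$, and $|\theta-S|=\theta+S-2(\theta\wedge S)$. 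Hence
\[
T^{\mathrm{ind}}_{2}(U,V)^{2}=3\bigl(\mathbf{E}(\tau_{U})+\mathbf{E}(\tau_{V})\bigr)-2\,\mathbf{E}(\theta\wedge S).
\]
Subtracting the two expressions, the terms $\mathbf{E}(\tau_{U})$ and $\mathbf{E}(\tau_{V})$ cancel. This leaves $6\,\mathbf{E}(\tau_{U\cap V})-2\,\mathbf{E}(\theta\wedge S)$, which is the claimed identity. The equivalence follows from it immediately, since the difference of squares of nonnegative numbers has the same sign as the difference of the numbers.

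\textbf{Main obstacle.} The only delicate point is justifying the Wald identities for unbounded stopping times. I would first apply optional stopping at the bounded times $\sigma\wedge n$ and $\rho\wedge n$. Then $\mathbf{E}\bigl(|B_{\rho\wedge n}-B_{\sigma\wedge n}|^{2}\bigr)=2\,\mathbf{E}(\rho\wedge n-\sigma\wedge n)$, which shows that the stopped martingale $(B_{t\wedge\rho})_{t\ge0}$ is bounded in $L^{2}$ (its second moment is $2\,\mathbf{E}(\rho\wedge n)\le2\,\mathbf{E}(\rho)$). It therefore converges in $L^{2}$, and letting $n\to\infty$ with monotone convergence on the time side gives the identities. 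The same $L^{2}$ bound yields $\mathbf{E}(B_{\tau_{U}})=0$ and the integrability needed for the cross term.
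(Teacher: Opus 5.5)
Your proof is correct and follows essentially the same route as the paper. The paper also obtains $T^{\mathrm{same}}_{2}(U,V)^{2}=3\,\mathbf{E}(|\tau-\sigma|)$ from the isometry $\mathbf{E}(|B_{\tau}-B_{\sigma}|^{2})=2\,\mathbf{E}(\tau\vee\sigma-\tau\wedge\sigma)$ with $\tau\wedge\sigma=\tau_{U\cap V}$, and computes $T^{\mathrm{ind}}_{2}(U,V)^{2}$ by removing the cross term through independence and zero means and applying optional stopping to $|B_{t}|^{2}-2t$; your truncation at $\sigma\wedge n,\rho\wedge n$ with the $L^{2}$-bounded stopped martingale just makes explicit the justification for unbounded stopping times that the paper leaves implicit.
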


\begin{proof}
We first compute the same-path cost. Set
\[
\tau:=\tau^{B}_{U},\qquad\sigma:=\tau^{B}_{V}.
\]
Since both stopping times are taken along the same Brownian path,
we have
\[
\tau\wedge\sigma=\tau_{U\cap V}.
\]
Also,
\[
|B_{\tau}-B_{\sigma}|=|B_{\tau\vee\sigma}-B_{\tau\wedge\sigma}|.
\]
By the Brownian isometry between stopping times,
\[
\mathbf{E}(|B_{\tau}-B_{\sigma}|^{2})=2\mathbf{E}(\tau\vee\sigma-\tau\wedge\sigma)=2\mathbf{E}(|\tau-\sigma|).
\]
Therefore
\[
T^{\mathrm{same}}_{2}(U,V)^{2}=3\,\mathbf{E}(|\tau-\sigma|).
\]
Since
\[
|\tau-\sigma|=\tau+\sigma-2(\tau\wedge\sigma),
\]
we obtain
\[
T^{\mathrm{same}}_{2}(U,V)^{2}=3\left(\mathbf{E}(\tau_{U})+\mathbf{E}(\tau_{V})-2\,\mathbf{E}(\tau_{U\cap V})\right).
\]
Now we compute the independent cost. Since $B$ and $Z$ are independent
and
\[
\mathbf{E}(B_{\tau^{B}_{U}})=\mathbf{E}(Z_{\tau^{Z}_{V}})=0,
\]
we have
\[
\mathbf{E}(|B_{\tau^{B}_{U}}-Z_{\tau^{Z}_{V}}|^{2})=\mathbf{E}(|B_{\tau^{B}_{U}}|^{2})+\mathbf{E}(|Z_{\tau^{Z}_{V}}|^{2}).
\]
By optional stopping applied to the martingales $|B_{t}|^{2}-2t$
and $|Z_{t}|^{2}-2t$,
\[
\mathbf{E}(|B_{\tau^{B}_{U}}|^{2})=2\mathbf{E}(\tau_{U}),\qquad\mathbf{E}(|Z_{\tau^{Z}_{V}}|^{2})=2\mathbf{E}(\tau_{V}).
\]
Hence
\[
\mathbf{E}(|B_{\tau^{B}_{U}}-Z_{\tau^{Z}_{V}}|^{2})=2(\mathbf{E}(\tau_{U})+\mathbf{E}(\tau_{V})).
\]
For the time term, since $\theta$ and $S$ are independent copies
of $\tau_{U}$ and $\tau_{V}$,
\[
\mathbf{E}(|\tau^{B}_{U}-\tau^{Z}_{V}|)=\mathbf{E}(|\theta-S|)=\mathbf{E}(\theta)+\mathbf{E}(S)-2\,\mathbf{E}(\theta\wedge S)=\mathbf{E}(\tau_{U})+\mathbf{E}(\tau_{V})-2\,\mathbf{E}(\theta\wedge S).
\]
Therefore
\[
T^{\mathrm{ind}}_{2}(U,V)^{2}=3\left(\mathbf{E}(\tau_{U})+\mathbf{E}(\tau_{V})\right)-2\,\mathbf{E}(\theta\wedge S).
\]
Subtracting the formula for $T^{\mathrm{same}}_{2}(U,V)$, we get
\[
\begin{aligned}T^{\mathrm{ind}}_{2}(U,V)^{2}-T^{\mathrm{same}}_{2}(U,V) & ^{2}=3\left(\mathbf{E}(\tau_{U})+\mathbf{E}(\tau_{V})\right)-2\,\mathbf{E}(\theta\wedge S)\\
 & \qquad-3\left(\mathbf{E}(\tau_{U})+\mathbf{E}(\tau_{V})-2\,\mathbf{E}(\tau_{U\cap V})\right)\\
 & =2\left(3\,\mathbf{E}(\tau_{U\cap V})-\mathbf{E}(\theta\wedge S)\right).
\end{aligned}
\]
This proves the identity, and the ``if and only if'' statement follows
immediately.
\end{proof}

For $p\neq2$, the spatial term no longer depends only on the elapsed
time between the two stopping times; it depends on the geometry of
the path during that interval. Thus, we speculate that no analogous criterion in terms
only of $\tau_{U},\tau_{V},\tau_{U\cap V}$ is available in general. 

Without the temporal component, the comparison between the same-path
and independent-path couplings becomes essentially tautological. Indeed,
if one uses only the spatial cost and $p=2$, then
\[
\left(T^{\mathrm{sp,same}}_{2}(U,V)\right)^{2}=\mathbf{E}\left(|B_{\tau_{U}}-B_{\tau_{V}}|^{2}\right).
\]
Since the two exit positions are taken from the same Brownian path,
the increment $B_{\tau_{U}}-B_{\tau_{V}}$ has quadratic variation
$|\tau_{U}-\tau_{V}|$, and hence
\[
\mathbf{E}\left(|B_{\tau_{U}}-B_{\tau_{V}}|^{2}\right)=2\mathbf{E}\left(|\tau_{U}-\tau_{V}|\right).
\]
Moreover,
\[
|\tau_{U}-\tau_{V}|=\tau_{U}+\tau_{V}-2(\tau_{U}\wedge\tau_{V})=\tau_{U}+\tau_{V}-2\tau_{U\cap V}.
\]
Therefore
\[
\left(T^{\mathrm{sp,same}}_{2}(U,V)\right)^{2}=2\left(\mathbf{E}\left(\tau_{U}\right)+\mathbf{E}\left(\tau_{V}\right)-2\mathbf{E}\left(\tau_{U\cap V}\right)\right).
\]
On the other hand, for independent Brownian paths,
\[
\left(T^{\mathrm{sp,ind}}_{2}(U,V)\right)^{2}=\mathbf{E}\left(|B_{\tau_{U}}-Z_{\tau_{V}}|^{2}\right)=2\mathbf{E}\left(\tau_{U}\right)+2\mathbf{E}\left(\tau_{V}\right).
\]
Consequently,
\[
\left(T^{\mathrm{sp,ind}}_{2}(U,V)\right)^{2}-\left(T^{\mathrm{sp,same}}_{2}(U,V)\right)^{2}=4\mathbf{E}(\tau_{U\cap V}).
\]
Since $0\in U\cap V$, the intersection contains a small ball around
the origin, and thus $\mathbf{E}(\tau_{U\cap V})>0$. Hence the same-path
coupling is automatically cheaper than the independent-path coupling
in the spatial-only comparison. In this sense, removing the time component
destroys the nontrivial balance present in the space-time cost: the
comparison no longer gives a meaningful criterion, but reduces to
the tautological fact that two Brownian paths which agree up to exiting
$U\cap V$ are spatially closer than two independent paths.
\begin{cor}
Let $p=2$. If $U\subset V$, then the same path-cost is cheaper than
the independent-path cost. 
\end{cor}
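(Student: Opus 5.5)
By Theorem \ref{thm:same path vs ind path}, it suffices to show that
\[
3\,\mathbf{E}(\tau_{U\cap V})>\mathbf{E}(\theta\wedge S),
\]
where $\theta\sim\tau_{U}$ and $S\sim\tau_{V}$ are independent. Since $U\subset V$, we have $U\cap V=U$, so $\tau_{U\cap V}=\tau_{U}$ and the left-hand side equals $3\,\mathbf{E}(\tau_{U})$. For the right-hand side I will use the elementary pointwise bound $\theta\wedge S\le\theta$, which gives
\[
\mathbf{E}(\theta\wedge S)\le\mathbf{E}(\theta)=\mathbf{E}(\tau_{U}).
\]
Independence is not even needed at this step.

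It then remains to check the strict inequality $\mathbf{E}(\tau_{U})<3\,\mathbf{E}(\tau_{U})$. This holds as soon as $0<\mathbf{E}(\tau_{U})<\infty$. Finiteness follows from $U\in\mathcal{S}_{2}$, because $\mathbf{E}(\tau_{U})=\mathbf{E}(\tau_{U}^{2/2})<\infty$ by the characterization of $\mathcal{S}_{p}$ via exit-time moments. Positivity holds because $U$ is open and contains $0$: it contains a disc $D(0,r)$, so $\tau_{U}\ge\tau_{D(0,r)}$ and $\mathbf{E}(\tau_{D(0,r)})=r^{2}/2>0$ with the normalization used in the paper, where $|B_{t}|^{2}-2t$ is a martingale. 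The latter is the same observation the paper makes just before the corollary.

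Combining these, $3\,\mathbf{E}(\tau_{U\cap V})=3\,\mathbf{E}(\tau_{U})>\mathbf{E}(\tau_{U})\ge\mathbf{E}(\theta\wedge S)$. The equivalence in Theorem \ref{thm:same path vs ind path} then yields $T^{\mathrm{same}}_{2}(U,V)<T^{\mathrm{ind}}_{2}(U,V)$. There is no real obstacle; the only points needing care are that both $U$ and $V$ lie in $\mathcal{S}_{2}$, so that both costs are finite and the identity of Theorem \ref{thm:same path vs ind path} applies, and that the positivity of $\mathbf{E}(\tau_{U})$ gives strictness rather than a mere non-strict inequality. In fact the argument shows a quantitative gap,
\[
T^{\mathrm{ind}}_{2}(U,V)^{2}-T^{\mathrm{same}}_{2}(U,V)^{2}\ge4\,\mathbf{E}(\tau_{U}),
\]
which could be recorded as a remark.
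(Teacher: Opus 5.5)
Your proposal is correct and follows the paper's proof exactly: with $\tau_{U\cap V}=\tau_U$ and the pointwise bound $\theta\wedge S\le\theta$, the paper writes the one-line bound $3\,\mathbf{E}(\tau_{U\cap V})-\mathbf{E}(\theta\wedge S)\ge 2\,\mathbf{E}(\tau_U)>0$ and invokes Theorem \ref{thm:same path vs ind path}. Your explicit checks that $0<\mathbf{E}(\tau_U)<\infty$, and the resulting gap $T^{\mathrm{ind}}_2(U,V)^2-T^{\mathrm{same}}_2(U,V)^2\ge 4\,\mathbf{E}(\tau_U)$, fill in details the paper leaves implicit.
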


\begin{proof}
It follows from 
\[
3\mathbf{E}(\tau_{U\cap V})-\mathbf{E}(\theta\wedge S)\geq2\mathbf{E}(\tau_{U})>0.
\]
\end{proof}

Theorem \ref{thm:same path vs ind path} shows that the comparison
between $T^{\mathrm{ind}}_{2}$ and $T^{\mathrm{same}}_{2}$ is governed
by how much $U$ and $V$ are overlapping. It suggests that
\[
T^{\mathrm{ind}}_{2}(U,V)<T^{\mathrm{same}}_{2}(U,V)
\]
occurs when the overlap between $U$ and $V$ is small, and vice versa.
We give now a concrete illustration of this idea. Let $0<\varepsilon<L$,
and define
\[
U_{\varepsilon,L}:=\{z\in\mathbb{C}:\ -\varepsilon<\Re(z)<L\},\qquad V_{\varepsilon,L}:=\{z\in\mathbb{C}:\ -L<\Re(z)<\varepsilon\}.
\]

\begin{center}
\begin{tikzpicture}[scale=0.8]
  \begin{scope}
    \draw[->] (-2.7,0) -- (2.7,0) node[right] {$\Re (z)$};
    \draw[->] (0,-3.5) -- (0,3.5) node[above] {$\Im (z)$};

    \draw[dashed] (-0.3,-3.3) -- (-0.3, 3.3);
    \draw[dashed] ( 2.0,-3.3) -- ( 2.0, 3.3);
    \node[below] at (-0.3,-0.05) {$-\varepsilon$};
    \node[below] at ( 2.0,-0.05) {$L$};
    \node at (1.0, 2.3) {$U_{\varepsilon,L}$};
  \end{scope}
  \begin{scope}[xshift=8.5cm]
    \draw[->] (-2.7,0) -- (2.7,0) node[right] {$\Re (z)$};
    \draw[->] (0,-3.5) -- (0,3.5) node[above] {$\Im (z)$};

    \draw[dashed] ( 0.3,-3.3) -- ( 0.3, 3.3);
    \draw[dashed] (-2.0,-3.3) -- (-2.0, 3.3);
    \node[below] at ( 0.3,-0.05) {$\varepsilon$};
    \node[below] at (-2.0,-0.05) {$-L$};
    \node at (-1.0,-2.3) {$V_{\varepsilon,L}$};
  \end{scope}
\end{tikzpicture}
\end{center}

Both domains contain $0$, and neither is contained in the other.
Their intersection is the vertical strip
\[
U_{\varepsilon,L}\cap V_{\varepsilon,L}=\{z\in\mathbb{C}:\ |\Re(z)|<\varepsilon\}.
\]
We consider the comparison between the same-path and independent couplings
for $p=2$. Since the domains are invariant in the imaginary direction,
the exit time depends only on the one-dimensional Brownian motion
\[
X_{t}:=\Re(B_{t}).
\]
Thus
\[
\tau_{U_{\varepsilon,L}}=\inf\{t\geq0:\ X_{t}\notin(-\varepsilon,L)\},\qquad\tau_{V_{\varepsilon,L}}=\inf\{t\geq0:\ X_{t}\notin(-L,\varepsilon)\}.
\]
By symmetry, these two exit times have the same law. For a one-dimensional
Brownian motion started at $0$, the mean exit time from an interval
$(a,b)$ is
\[
\mathbf{E}(\tau_{(a,b)})=(0-a)(b-0)=-ab.
\]
Hence
\[
\mathbf{E}(\tau_{U_{\varepsilon,L}})=\mathbf{E}(\tau_{V_{\varepsilon,L}})=\varepsilon L.
\]
Moreover,
\[
U_{\varepsilon,L}\cap V_{\varepsilon,L}=\{|\Re(z)|<\varepsilon\},
\]
so
\[
\mathbf{E}(\tau_{U_{\varepsilon,L}\cap V_{\varepsilon,L}})=\varepsilon^{2}.
\]
Let
\[
\theta,\theta'
\]
be independent random variables with the common law of $\tau_{U_{\varepsilon,L}}$.
Then the comparison formula from the previous proposition yields
\[
T^{\mathrm{ind}}_{2}(U_{\varepsilon,L},V_{\varepsilon,L})^{2}-T^{\mathrm{same}}_{2}(U_{\varepsilon,L},V_{\varepsilon,L})^{2}=2\left(3\varepsilon^{2}-\mathbf{E}(\theta\wedge\theta')\right).
\]
Equivalently,
\[
T^{\mathrm{same}}_{2}(U_{\varepsilon,L},V_{\varepsilon,L})<T^{\mathrm{ind}}_{2}(U_{\varepsilon,L},V_{\varepsilon,L})\quad\Longleftrightarrow\quad\mathbf{E}(\theta\wedge\theta')<3\varepsilon^{2}.
\]
Since
\[
\mathbf{E}(|\tau_{U_{\varepsilon,L}}-\tau_{V_{\varepsilon,L}}|)=\mathbf{E}(\tau_{U_{\varepsilon,L}})+\mathbf{E}(\tau_{V_{\varepsilon,L}})-2\mathbf{E}(\tau_{U_{\varepsilon,L}\cap V_{\varepsilon,L}}),
\]
we obtain the explicit same-path cost
\[
T^{\mathrm{same}}_{2}(U_{\varepsilon,L},V_{\varepsilon,L})^{2}=3\Bigl(2\varepsilon L-2\varepsilon^{2}\Bigr)=6\varepsilon(L-\varepsilon).
\]
Similarly,
\[
T^{\mathrm{ind}}_{2}(U_{\varepsilon,L},V_{\varepsilon,L})^{2}=3\bigl(\mathbf{E}(\tau_{U_{\varepsilon,L}})+\mathbf{E}(\tau_{V_{\varepsilon,L}})\bigr)-2\mathbf{E}(\theta\wedge\theta')=6\varepsilon L-2\mathbf{E}(\theta\wedge\theta').
\]
This example shows that the sign of
\[
T^{\mathrm{ind}}_{2}-T^{\mathrm{same}}_{2}
\]
depends on the geometry of the overlap. For small $\frac{L}{\varepsilon}$,
the same-path coupling is cheaper; for sufficiently large $\frac{L}{\varepsilon}$,
the independent coupling becomes cheaper. A crude sufficient condition
for the same-path coupling to be cheaper is obtained from
\[
\mathbf{E}(\theta)=\varepsilon L<3\varepsilon^{2},
\]
equivalently when 
\[
\frac{L}{\varepsilon}<3.
\]

In the last section, the overlapping strips will be revisited and
discussed in a generalized setting.
\begin{prop}
\label{prop:monotonicity} The cost $T^{\text{ind}}_{2}$ is non decreasing
with respect to each argument.
\end{prop}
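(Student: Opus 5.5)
We use the closed form of $T^{\mathrm{ind}}_{2}$ obtained in the proof of Theorem \ref{thm:same path vs ind path}. With $\theta\sim\tau_{U}$ and $S\sim\tau_{V}$ independent,
\[
T^{\mathrm{ind}}_{2}(U,V)^{2}=3\bigl(\mathbf{E}(\tau_{U})+\mathbf{E}(\tau_{V})\bigr)-2\,\mathbf{E}(\theta\wedge S).
\]
It is more convenient to rewrite this as
\[
T^{\mathrm{ind}}_{2}(U,V)^{2}=2\bigl(\mathbf{E}(\tau_{U})+\mathbf{E}(\tau_{V})\bigr)+\mathbf{E}(|\theta-S|),
\]
which follows from $\theta+S-2(\theta\wedge S)=|\theta-S|$. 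Since $T^{\mathrm{ind}}_{2}$ is symmetric in its arguments, it suffices to fix $V$ and show that $U\subset U'$ (both in $\mathcal{S}_{2}$) implies $T^{\mathrm{ind}}_{2}(U,V)\le T^{\mathrm{ind}}_{2}(U',V)$.

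The first step is a monotone coupling. Run one Brownian motion $B$. Since $U\subset U'$, we have $\tau^{B}_{U}\le\tau^{B}_{U'}$ pathwise. Take $S\sim\tau_{V}$ independent of $B$, and set $\theta=\tau^{B}_{U}$ and $\theta'=\tau^{B}_{U'}$. Then $\theta\le\theta'$ almost surely. Moreover, $\theta\independent S$ and $\theta'\independent S$ have the correct laws. The expression $\mathbf{E}(\theta\wedge S)$ depends only on the product law, so we may compute it in this joint realization. The monotonicity $\mathbf{E}(\tau_{U})\le\mathbf{E}(\tau_{U'})$ is immediate.

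The second step compares the two expressions, and the naive route does not work. Writing
\[
T(U',V)^{2}-T(U,V)^{2}=3\,\mathbf{E}(\theta'-\theta)-2\,\mathbf{E}(\theta'\wedge S-\theta\wedge S),
\]
we use that $x\mapsto x\wedge s$ is $1$-Lipschitz and nondecreasing. This gives
\[
0\le\theta'\wedge S-\theta\wedge S\le\theta'-\theta .
\]
Hence the difference is at least $3\,\mathbf{E}(\theta'-\theta)-2\,\mathbf{E}(\theta'-\theta)=\mathbf{E}(\theta'-\theta)\ge0$. This proves the claim. Equivalently, in the second form, the map $x\mapsto 2x+|x-s|$ is nondecreasing, because its slope is $2\pm1>0$. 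The identity then follows by taking expectations over the coupling.

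The main obstacle is really only a justification issue. All the expectations must be finite, and this holds because $U,U',V\in\mathcal{S}_{2}$, so $\mathbf{E}(\tau)<\infty$ and the rearrangements are legitimate. We also need that $\mathbf{E}(\theta\wedge S)$ is a functional of the laws only, which is what permits the pathwise monotone realization. One should also note that the statement implicitly requires enlarged domains to remain in $\mathcal{S}_{2}$. Otherwise the right-hand side is $+\infty$ and the inequality holds trivially.
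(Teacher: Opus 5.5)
Your proof is correct and is essentially the paper's argument: you realize $\tau_U\le\tau_{U'}$ on a common Brownian path independent of $S\sim\tau_V$, apply $\theta'\wedge S-\theta\wedge S\le\theta'-\theta$ to the closed form $3(\mathbf{E}\tau_U+\mathbf{E}\tau_V)-2\,\mathbf{E}(\theta\wedge S)$, and conclude that the difference of squares is at least $\mathbf{E}(\tau_{U'})-\mathbf{E}(\tau_U)\ge 0$. Your explicit construction of the monotone coupling, which the paper only asserts, and the reformulation via the nondecreasing map $x\mapsto 2x+|x-s|$ are useful clarifications but do not change the route.
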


\begin{proof}
By symmetry, it is enough to show the property for one argument. Fix
$U$ and let $V\subset W$. By the formula established in the proof
of Theorem \ref{thm:same path vs ind path}, we have 
\[
T^{\text{ind}}_{2}(U,W)^{2}-T^{\text{ind}}_{2}(U,V)^{2}=3\left(\mathbf{E}\tau_{W}-\mathbf{E}\tau_{V}\right)-2(\mathbf{E}(\theta\wedge S_{W})-\mathbf{E}(\theta\wedge S_{V}))
\]
with $S_{V}\sim\tau_{V}$ , $S_{W}\sim\tau_{W}$, $\theta\sim\tau_{U}$,
$S_{V}\leq S_{W}$ and $S_{V},S_{W}$ are independent of $\theta$.
Since 
\[
\theta\wedge S_{W}-\theta\wedge S_{V}\leq S_{W}-S_{V}
\]
we get 
\[
T^{\text{ind}}_{2}(U,W)^{2}-T^{\text{ind}}_{2}(U,V)^{2}\geq\left(\mathbf{E}\tau_{W}-\mathbf{E}\tau_{V}\right)\geq0.
\]
\end{proof}

For the same-path coupling , one obtains only a weaker monotonicity
property .
\begin{prop}
If $U\subset V\subset W$ then 
\[
T^{\text{same}}_{2}(U,V)\leq T^{\text{same}}_{2}(U,W).
\]
\end{prop}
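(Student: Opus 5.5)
We plan to use the closed form for the same-path cost obtained in the proof of Theorem \ref{thm:same path vs ind path}, namely
\[
T^{\mathrm{same}}_{2}(U,V)^{2}=3\left(\mathbf{E}(\tau_{U})+\mathbf{E}(\tau_{V})-2\,\mathbf{E}(\tau_{U\cap V})\right),
\]
which holds for any $U,V\in\mathcal{S}_{2}$. Under the nesting $U\subset V\subset W$, the intersections simplify: $U\cap V=U$ and $U\cap W=U$. Substituting these gives
\[
T^{\mathrm{same}}_{2}(U,V)^{2}=3\left(\mathbf{E}(\tau_{V})-\mathbf{E}(\tau_{U})\right),\qquad T^{\mathrm{same}}_{2}(U,W)^{2}=3\left(\mathbf{E}(\tau_{W})-\mathbf{E}(\tau_{U})\right).
\]
So the comparison reduces to the inequality $\mathbf{E}(\tau_{V})\le\mathbf{E}(\tau_{W})$.

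That inequality follows from domain monotonicity along a single path. Since $V\subset W$ and both exit times are read off the same Brownian path $B$, we have $\tau^{B}_{V}\le\tau^{B}_{W}$ almost surely. Taking expectations gives $\mathbf{E}(\tau_{V})\le\mathbf{E}(\tau_{W})$. Finiteness of both sides comes from the standing assumption: we assume $W\in\mathcal{S}_{2}$, which makes $\mathbf{E}(\tau_{W})<\infty$. Taking square roots of the two nonnegative quantities then yields $T^{\mathrm{same}}_{2}(U,V)\le T^{\mathrm{same}}_{2}(U,W)$.

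The only point that needs care is the validity of the displayed closed form, since it rests on the isometry $\mathbf{E}|B_{\tau\vee\sigma}-B_{\tau\wedge\sigma}|^{2}=2\,\mathbf{E}(\tau\vee\sigma-\tau\wedge\sigma)$. This isometry requires integrability of the stopping times, which is guaranteed once all three domains lie in $\mathcal{S}_{2}$; this is already implicit in the theorem we invoke.

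A pathwise alternative avoids the closed form. We would note that $|B_{\tau_{U}}-B_{\tau_{W}}|^{2}$ is not pointwise monotone, so the monotonicity genuinely comes through expectations. With the nesting we also have $|\tau_{U}-\tau_{W}|=\tau_{W}-\tau_{U}\ge\tau_{V}-\tau_{U}$ pathwise. For the spatial term, we would use the optional stopping argument for the martingale $|B_{t}-B_{\tau_{U}}|^{2}-2(t-\tau_{U})$ on $[\tau_{U},\tau_{W}]$. In either route, the main step is justifying the optional stopping, and it is handled by the $\mathcal{S}_{2}$ integrability.
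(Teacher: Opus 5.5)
Your proof is correct and is essentially the paper's argument: the paper also uses the $p=2$ identity $T^{\mathrm{same}}_{2}(U,V)^{2}=3\,\mathbf{E}|\tau_{U}-\tau_{V}|$ from the proof of Theorem \ref{thm:same path vs ind path}, together with the pathwise nesting $\tau_{U}\le\tau_{V}\le\tau_{W}$, which gives $|\tau_{U}-\tau_{V}|=\tau_{V}-\tau_{U}\le\tau_{W}-\tau_{U}=|\tau_{U}-\tau_{W}|$. Your reduction to $\mathbf{E}(\tau_{V})\le\mathbf{E}(\tau_{W})$ is this same inequality after taking expectations, and the finiteness of $\mathbf{E}(\tau_{W})$ supplies the needed integrability for all three domains.
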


In other words, $T^{\text{same}}_{2}(U,\cdot)$ is not monotone under
$V\subset W$ unless $U$ is also nested with them. The proof is similar
to Proposition \ref{prop:monotonicity}. Nevertheless, it is worth
to point out the obstruction for $T^{\text{same}}_{2}(U,\cdot)$ to
be nondecreasing. For $T^{\text{same}}_{2}(U,\cdot)$ to be nondecreasing,
we need 
\[
|\tau_{U}-\tau_{V}|\leq|\tau_{U}-\tau_{W}|
\]
which is not necessary true as it depends where $\tau_{U}$ lies relative
to $\tau_{V}$ and $\tau_{W}$. 

\section{Scaled domains and explicit formulas }\label{sec:Scaled-domains-and}

Obtaining the exact value of $\Lambda_{p}$ and $\Phi_{p}$ seems
far from being obtained for an arbitrary choice of the inputs $U$
and $V$. However, when $U$ and $V$ are homothetic, then a lot can
be said as we will show in this section. In what comes next, as we
will work mainly with $U$ and its scaled copy 
\[
V:=\lambda U=\{\lambda z:\ z\in U\}.
\]
we will use the following quantities repeatedly:
\[
A_{p}:=\mathbf{E}(|Z_{\tau_{U}}|^{p}),\qquad M_{p}:=\mathbf{E}(\tau^{\frac{p}{2}}_{U}),\qquad S_{p}:=A_{p}+M_{p}.
\]

The first observation to state is that for large values of $\lambda$,
all costs between homothetic domains behave in the same. By Brownian
scaling, any coupling $\pi\in\Pi(\mu_{U},\mu_{\lambda U})$ may be
written as a coupling of two copies
\[
(\xi,\theta),\qquad(\xi',\theta')
\]
of the exit pair from $U$, with cost
\[
T^{\pi}_{p}=T^{\pi}_{p}(\lambda)=\left(\mathbf{E}_{\pi}(|\xi-\lambda\xi'|^{p}+|\theta-\lambda^{2}\theta'|^{p/2})\right)^{1/p}
\]

\begin{thm}
$\dagger$ \label{thm:expansion lamda} Let $p\ge2$, and let $U\subset\mathbb{C}$
be a bounded domain with $0\in U$. Then $T^{\pi}_{p}(\lambda)$ admits
a complete asymptotic expansion in powers of $\lambda^{-1}$. More
precisely, there exist constants
\[
b^{\pi}_{0},b^{\pi}_{1},b^{\pi}_{2},\ldots
\]
such that, for every $N\ge0$,
\begin{equation}
T^{\pi}_{p}(\lambda)=\lambda S^{1/p}_{p}+b^{\pi}_{0}+\sum^{N}_{j=1}b^{\pi}_{j}\lambda^{-j}+O(\lambda^{-N-1})\qquad(\lambda\to\infty).\label{eq:expansion}
\end{equation}
The first correction is
\[
b^{\pi}_{0}=-\frac{\mathbf{E}_{\pi}\left(|\xi'|^{p-2}\xi'\cdot\xi\right)}{S^{(p-1)/p}_{p}},
\]
where $\xi'\cdot\xi=\Re(\xi'\overline{\xi})$. In particular,
\[
T^{\pi}_{p}(\lambda)=\lambda S^{1/p}_{p}-\frac{\mathbf{E}_{\pi}\left(|\xi'|^{p-2}\xi'\cdot\xi\right)}{S^{(p-1)/p}_{p}}+O(\lambda^{-1}).
\]
\end{thm}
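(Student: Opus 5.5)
Since $U$ is bounded, $|\xi|,|\xi'|\le R:=\sup_{z\in U}|z|$, and $\tau_U$ has exponential moments. Hence every moment of $\theta,\theta'$ is finite and all expansions below can be integrated term by term.

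Write $\varepsilon=\lambda^{-1}$ and factor $\lambda^{p}$ out of the cost:
\[
T^{\pi}_{p}(\lambda)^{p}=\lambda^{p}\,\mathbf{E}_{\pi}\Bigl(|\xi'-\varepsilon\xi|^{p}+|\theta'-\varepsilon^{2}\theta|^{p/2}\Bigr)=:\lambda^{p}F(\varepsilon).
\]
This gives $T^{\pi}_{p}(\lambda)=\lambda F(\varepsilon)^{1/p}$, with $F(0)=\mathbf{E}(|\xi'|^{p})+\mathbf{E}(\theta'^{p/2})=S_{p}>0$. The claimed expansion \ref{eq:expansion} follows once $F$ admits an asymptotic (Taylor) expansion $F(\varepsilon)=S_{p}+\sum_{k\ge1}a_{k}\varepsilon^{k}+O(\varepsilon^{N+2})$ at $\varepsilon=0^{+}$. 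Indeed, $x\mapsto x^{1/p}$ is analytic near $S_{p}$, so composing gives $F(\varepsilon)^{1/p}=S_{p}^{1/p}+\frac{a_{1}}{p}S_{p}^{1/p-1}\varepsilon+\dots$, and multiplying by $\lambda=\varepsilon^{-1}$ yields $\lambda S_{p}^{1/p}+b^{\pi}_{0}+\sum b^{\pi}_{j}\lambda^{-j}$. Here $b^{\pi}_{0}=\frac{a_{1}}{p}S_{p}^{(1-p)/p}$.

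The first coefficient comes from differentiating under the expectation:
\[
\frac{d}{d\varepsilon}|\xi'-\varepsilon\xi|^{p}\Big|_{\varepsilon=0}=-p|\xi'|^{p-2}\,\xi'\cdot\xi .
\]
The time term contributes only at order $\varepsilon^{2}$, since it depends on $\varepsilon^{2}$. So $a_{1}=-p\,\mathbf{E}_{\pi}(|\xi'|^{p-2}\xi'\cdot\xi)$, which gives exactly the stated $b^{\pi}_{0}$. The derivative is legitimate for $p\ge2$: the map $w\mapsto|w|^{p}$ is $C^{1}$ with gradient $p|w|^{p-2}w$, and it is dominated by $pR^{p-1}\cdot R$ on $|\varepsilon|\le1$, so dominated convergence applies.

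The main obstacle is the higher-order terms. $|w|^{p}$ and $t^{p/2}$ are not smooth at $w=0$ and $t=0$ unless $p$ is an even integer, and $\xi'$ can be arbitrarily close to $0$ only if $0\in\partial U$, which is excluded.

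For the spatial term, I would use that $0\in U$ is an interior point: there is $r>0$ with $D(0,r)\subset U$, so $|\xi'|\ge r$ a.s. For $\varepsilon<r/(2R)$ we then have $|\xi'-\varepsilon\xi|\ge r/2$, and $\varepsilon\mapsto|\xi'-\varepsilon\xi|^{p}=(|\xi'|^{2}-2\varepsilon\,\xi'\cdot\xi+\varepsilon^{2}|\xi|^{2})^{p/2}$ is real-analytic on this range with Taylor coefficients bounded uniformly in $\omega$. Integrating gives a genuine power series.

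For the temporal term, $\theta'$ can be arbitrarily small, although with super-exponentially small probability: $\mathbf{P}(\tau_U<t)\le Ce^{-r^{2}/(Ct)}$. I would split on $\{\theta'\ge\varepsilon\}$ and its complement. On $\{\theta'\ge\varepsilon\}$ we have $\varepsilon^{2}\theta/\theta'\le\varepsilon\theta$, so
\[
(\theta'-\varepsilon^{2}\theta)^{p/2}=\theta'^{p/2}(1-\varepsilon^{2}\theta/\theta')^{p/2}
\]
expands binomially. The coefficients are $\binom{p/2}{k}\theta^{k}\theta'^{p/2-k}\varepsilon^{2k}$, and their expectations are finite on $\{\theta'\ge\varepsilon\}$ up to factors $\varepsilon^{-k}$ that the small-time tail bound absorbs. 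Tail truncation on $\{\theta\ge\theta'/(2\varepsilon)\}$ is also controlled by the exponential moments of $\theta$. The complementary event $\{\theta'<\varepsilon\}$ has probability $O(e^{-c/\varepsilon})=O(\varepsilon^{M})$ for every $M$, and on it the integrand is bounded by $(\theta'+\theta)^{p/2}$, which has all moments. Hence that event contributes $O(\varepsilon^{N+2})$ after Cauchy–Schwarz.

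Collecting terms shows that $F$ has an asymptotic expansion in integer powers of $\varepsilon$ to every order. The coefficients $b^{\pi}_{j}$ depend on $\pi$ only through finitely many mixed moments of the two exit pairs under $\pi$. Composing with $x^{1/p}$ as above completes the argument.
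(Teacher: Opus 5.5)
Your proposal is correct and follows essentially the same route as the paper's proof. Both factor out $\lambda^{p}$, expand the spatial term analytically using $r\le|\xi'|\le R$, expand the temporal term binomially on a good event, use the super-exponentially small lower tail of the exit time (so all negative moments of $\theta'$ are finite) to make the bad event contribute $O(\lambda^{-M})$, and take the $p$-th root to get $b^{\pi}_{0}=a_{1}/(pS_{p}^{1-1/p})$. The only difference is that you split on $\{\theta'\ge\lambda^{-1}\}$, whereas the paper splits on $\{\theta\le\frac{1}{2}\lambda^{2}\theta'\}$ and bounds the complement with a Markov-type inequality.
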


The boundedness of $U$ is used to obtain all positive spatial and
temporal moments automatically. The fact that $U$ contains a ball
around $0$ gives the lower-tail control needed for negative moments
of the exit time and for inverse powers of $|\xi'|$. For the first-order
expansion alone, boundedness can be dropped by the finite Brownian
$p$-moment assumption. In contrast, higher-order terms require the
corresponding higher positive moments.\\
\\
The coefficients $b^{\pi}_{j}$ involves moments of $\xi,\xi',\theta,\theta'$.
But the expansion reveals a useful large-scale hierarchy. The leading
term,
\[
\lambda S^{1/p}_{p},
\]
is independent of the coupling $\pi$; it only records the Brownian
size of the dilated domain. The first coupling-dependent correction
is the constant term
\[
-\frac{\mathbf{E}_{\pi}\left(|\xi'|^{p-2}\xi'\cdot\xi\right)}{S^{(p-1)/p}_{p}},
\]
which depends only on the coupling of the exit positions $\xi$ and
$\xi'$. The exit times do not appear at this order. Their first contribution
occurs only at order $\lambda^{-1}$. This reflects the parabolic
nature of the cost: spatial perturbations are of relative size $\lambda^{-1}$,
whereas temporal perturbations are of relative size $\lambda^{-2}$.
Consequently, in the large-dilation regime, the optimization suggests
an implicit hierarchical structure. 
One first optimizes the spatial coupling of the exit positions; the
temporal coupling is selected only at the next order. For the supremal
transport, the same hierarchy appears with the opposite spatial optimization.
When the coupling $\pi$ comes from two independent paths then the
constant term in \ref{eq:expansion} becomes zero and hence
\begin{equation}
T^{\mathrm{ind}}_{p}(\lambda)=\lambda S^{1/p}_{p}+O(\lambda^{-1}),\label{eq:inde asympto}
\end{equation}

\begin{thm}
$\dagger$ \label{thm:scaling optimal} Let $p\ge2$ and $U\in\mathcal{S}_{p}$
. Then for $\lambda>0$, 
\[
\Lambda_{p}(U,\lambda U)=\left(|\lambda-1|^{p}A_{p}+|\lambda^{2}-1|^{\frac{p}{2}}M_{p}\right)^{1/p}.
\]
\end{thm}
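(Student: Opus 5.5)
The plan is to use Brownian scaling to reduce the problem to couplings of two copies of the exit pair of $U$. Then I would bound the spatial and temporal parts of the cost from below separately, each by a reverse Minkowski inequality, and finally exhibit a single coupling that attains both lower bounds at once.

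\emph{Scaling.} By Brownian scaling, $(\lambda Z_{t/\lambda^{2}})_{t\ge0}$ is again a planar Brownian motion started at $0$. Its exit time from $\lambda U$ is $\lambda^{2}\tau_{U}$ and its exit position is $\lambda Z_{\tau_{U}}$, so $\mu_{\lambda U}$ is the law of $(\lambda Z_{\tau_U},\lambda^{2}\tau_U)$. In particular $\lambda U\in\mathcal{S}_{p}$, and every $\pi\in\Pi(\mu_U,\mu_{\lambda U})$ corresponds to a pair $(\xi,\theta),(\xi',\theta')$ of identically distributed copies of $(Z_{\tau_U},\tau_U)$ with
\[
(T^{\pi}_{p})^{p}=\mathbf{E}\left(|\xi-\lambda\xi'|^{p}\right)+\mathbf{E}\left(|\theta-\lambda^{2}\theta'|^{p/2}\right).
\]
This is exactly the representation recalled before Theorem \ref{thm:expansion lamda}.

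\emph{Upper bound.} Take the diagonal coupling $\xi'=\xi$, $\theta'=\theta$, which is the same-path coupling through the scaled Brownian motion. Its cost is
\[
|1-\lambda|^{p}\mathbf{E}|\xi|^{p}+|1-\lambda^{2}|^{p/2}\mathbf{E}\left(\theta^{p/2}\right)=|\lambda-1|^{p}A_{p}+|\lambda^{2}-1|^{p/2}M_{p},
\]
so $\Lambda_p(U,\lambda U)$ is at most the claimed value.

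\emph{Lower bound, spatial term.} For an arbitrary coupling, Minkowski's inequality in $L^{p}$ (valid since $p\ge1$) gives
\[
\|\xi-\lambda\xi'\|_{L^{p}}\ge\bigl|\,\|\lambda\xi'\|_{L^{p}}-\|\xi\|_{L^{p}}\bigr|=|\lambda-1|\,A_{p}^{1/p},
\]
because $\xi$ and $\xi'$ have the same law.

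\emph{Lower bound, temporal term.} Here I apply Minkowski's inequality in $L^{p/2}$, which is a genuine normed space exactly because $p\ge2$; this is the only place the hypothesis $p\ge2$ is used. Since $\theta$ and $\theta'$ have the same law,
\[
\|\theta-\lambda^{2}\theta'\|_{L^{p/2}}\ge|\lambda^{2}-1|\,\|\theta\|_{L^{p/2}}.
\]
Raising to the power $p/2$ gives $\mathbf{E}|\theta-\lambda^{2}\theta'|^{p/2}\ge|\lambda^{2}-1|^{p/2}M_{p}$.

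Adding the two lower bounds shows that every coupling costs at least the claimed value. The diagonal coupling attains both bounds simultaneously, so it is optimal and equality holds.

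\emph{Expected difficulties.} The only point needing care is the scaling identification of $\mu_{\lambda U}$, together with the finiteness of $A_{p}$ and $M_{p}$ for $U\in\mathcal{S}_{p}$, which follows from \eqref{burkholder equivalence}. Beyond that, the observation that both reverse-Minkowski bounds are sharp on the same coupling closes the argument. I do not expect a genuine obstacle. For $p<2$ the temporal step would fail, because $L^{p/2}$ is then only a quasi-normed space and the reverse triangle inequality is no longer available.
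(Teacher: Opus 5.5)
Your proposal is correct and is essentially the paper's own proof. Brownian scaling reduces every plan to a coupling of two copies of the exit pair of $U$, and the diagonal coupling $\bigl((\xi,\theta),(\lambda\xi,\lambda^{2}\theta)\bigr)$ gives the upper bound. The reverse triangle inequality in $L^{p}$ and in $L^{p/2}$ (the latter is where $p\ge2$ is needed) then gives spatial and temporal lower bounds that this one coupling attains simultaneously.

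One point on terminology: this diagonal plan pairs $B$ in $U$ with the rescaled path $\lambda B_{\cdot/\lambda^{2}}$ in $\lambda U$. It should not be confused with the paper's same-path cost $T^{\mathrm{same}}_{p}$, where a single path is stopped on exiting both domains. The paper shows that plan is strictly suboptimal for starlike $U$, $p=2$ and $\lambda>1$.
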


Theorem \ref{thm:scaling optimal} reads as follows: The diagonal
coupling 
\[
\pi=\mathcal{L}((\xi,\theta),(\lambda\xi,\lambda^{2}\theta))
\]
is the cheapest transport plan. Furthermore, we have 
\begin{equation}
\Lambda_{p}(U,\lambda U)=S_{p}{}^{1/p}\lambda-\dfrac{A_{p}}{S_{p}{}^{1-1/p}}+\dfrac{M_{p}\bigl((p-2)A_{p}-M_{p}\bigr)}{2S_{p}{}^{2-1/p}}\cdot\dfrac{1}{\lambda}+O(\lambda^{-2}).\label{eq:optimal asymp}
\end{equation}

The asymptotic estimates \ref{eq:inde asympto} and \ref{eq:optimal asymp}
show that the asymptotic absolute error between $T^{\mathrm{ind}}_{p}(\lambda)$
and $\Lambda_{p}(U,\lambda U)$ is
\[
\dfrac{A_{p}}{S_{p}{}^{1-1/p}}+O(\lambda^{-1}).
\]
In particular, the independent-path coupling cannot be the optimal
transport plan for large $\lambda$. When $p=2$, the optional stopping
theorem applied to the martingale $(|Z_{t}|^{2}-2t)_{t}$ yields the
relation
\[
M_{2}=\frac{1}{2}A_{2}.
\]

\begin{cor}
For $p=2$, $\lambda\geq1$, we have 
\[
\Lambda_{2}(U,\lambda U)=\sqrt{(3\lambda^{2}-4\lambda+1)M_{2}}.
\]
In particular 
\[
\Lambda_{2}(D(0,r),D(0,R))=\sqrt{\frac{\bigl(2(R-r)^{2}+|R^{2}-r^{2}|\bigr)}{2}}.
\]
 
\end{cor}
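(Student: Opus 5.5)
The plan is to specialize Theorem \ref{thm:scaling optimal} to $p=2$ and then use the identity $M_{2}=\frac{1}{2}A_{2}$ to write everything in terms of $M_{2}$. The only genuinely probabilistic input is that identity, so the first step is to justify it for $U\in\mathcal{S}_{2}$. Here $\mathbf{E}(\tau_{U})<+\infty$, and the paper's normalization is that $(|Z_{t}|^{2}-2t)_{t}$ is a martingale. Optional stopping at $\tau_{U}\wedge n$ gives $\mathbf{E}|Z_{\tau_{U}\wedge n}|^{2}=2\mathbf{E}(\tau_{U}\wedge n)$. To pass to the limit $n\to\infty$:
\begin{itemize}
\item on the right, use monotone convergence;
\item on the left, use dominated convergence, with domination by $\sup_{s\le\tau_{U}}|Z_{s}|^{2}$, which is integrable by Burkholder's equivalence \ref{burkholder equivalence} (or directly by Doob's $L^{2}$ inequality for the stopped martingale).
\end{itemize}
This yields $A_{2}=2M_{2}$.

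Next, take $p=2$ and $\lambda\ge1$ in Theorem \ref{thm:scaling optimal}. Then $|\lambda-1|^{2}=(\lambda-1)^{2}$ and $|\lambda^{2}-1|=\lambda^{2}-1$, so
\[
\Lambda_{2}(U,\lambda U)^{2}=(\lambda-1)^{2}A_{2}+(\lambda^{2}-1)M_{2}=\bigl(2(\lambda-1)^{2}+\lambda^{2}-1\bigr)M_{2}=(3\lambda^{2}-4\lambda+1)M_{2}.
\]
The polynomial $3\lambda^{2}-4\lambda+1=(3\lambda-1)(\lambda-1)$ is nonnegative for $\lambda\ge1$, so taking square roots is legitimate.

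For the disc formula, first reduce to the case $R\ge r$ using the symmetry of $\Lambda_{2}$ from Theorem \ref{thm:Lambda is dist}; both discs are bounded, so they lie in $\mathcal{S}_{2}$. Then set $U=D(0,r)$ and $\lambda=R/r\ge1$. Since $|Z_{\tau}|=r$ almost surely, we get $A_{2}=r^{2}$ and hence $M_{2}=r^{2}/2$. Substituting,
\[
\Lambda_{2}^{2}=\frac{3R^{2}-4Rr+r^{2}}{2}=\frac{2(R-r)^{2}+(R^{2}-r^{2})}{2},
\]
and writing $|R^{2}-r^{2}|$ covers both orderings of $r$ and $R$.

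I expect no real obstacle, since all the substance sits in Theorem \ref{thm:scaling optimal}. The only point needing care is the integrability used in the optional stopping step.
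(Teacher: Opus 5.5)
Your proposal is correct and follows the paper's own route: you specialize Theorem \ref{thm:scaling optimal} to $p=2$, $\lambda\ge1$, substitute the optional-stopping identity $A_{2}=2M_{2}$ (stated in the paper just before the corollary), and then take $U=D(0,r)$ with $A_{2}=r^{2}$ for the disc formula. Your justification of the limit in the optional-stopping step and your symmetry reduction to $R\ge r$ fill in details the paper leaves implicit.
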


Unlike the transport $\Lambda_{p}$, the supremal Brownian coupling
$\Phi_{p}$ has no explicit formula for scaled domains. However, we
provide an integral formula for scaled discs. This case is special
because the exit pair has independent components \cite{port2012brownian}. 
\begin{prop}
$\dagger$ \label{prop:Unit disc case} Let $p\geq2$ and $\lambda>0$.
Let $Q$ be the quantile function of $\tau_{\mathbb{D}}$. Then
\[
\Phi_{p}(\mathbb{D},\lambda\mathbb{D})^{p}=(1+\lambda)^{p}+\int^{1}_{0}\left|Q(r)-\lambda^{2}Q(1-r)\right|^{p/2}\,dr.
\]
On the other hand, the independent coupling gives
\[
T^{\mathrm{ind}}_{p}(\mathbb{D},\lambda\mathbb{D})^{p}=I_{p}(\lambda)+J_{p/2}(\lambda),
\]
where
\[
I_{p}(\lambda):=\frac{1}{2\pi}\int^{2\pi}_{0}\left(1+\lambda^{2}-2\lambda\cos\theta\right)^{p/2}\,d\theta,
\]
and
\[
J_{p/2}(\lambda):=\int^{1}_{0}\int^{1}_{0}\left|Q(r)-\lambda^{2}Q(u)\right|^{p/2}\,dr\,du.
\]
\end{prop}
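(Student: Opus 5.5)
The plan is to use the special structure of the disc: by \cite{port2012brownian}, the exit position $Z_{\tau_{\mathbb{D}}}$ is uniform on $\partial\mathbb{D}$ and independent of $\tau_{\mathbb{D}}$. By Brownian scaling, the exit pair of $\lambda\mathbb{D}$ has the law of $(\lambda\xi',\lambda^{2}\theta')$, where $(\xi',\theta')$ is a copy of the exit pair of $\mathbb{D}$. So for any $\pi\in\Pi(\mu_{\mathbb{D}},\mu_{\lambda\mathbb{D}})$ the cost is
\[
\mathbf{E}_{\pi}\bigl(|\xi-\lambda\xi'|^{p}\bigr)+\mathbf{E}_{\pi}\bigl(|\theta-\lambda^{2}\theta'|^{p/2}\bigr).
\]
All moments are finite because $\mathbb{D}$ is bounded.

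\emph{Upper bound.} I will bound the two terms separately, each by its supremum over couplings of the corresponding marginals only. For the spatial term, $|\xi|=|\xi'|=1$ gives $|\xi-\lambda\xi'|\le1+\lambda$ pointwise, hence $\mathbf{E}_{\pi}|\xi-\lambda\xi'|^{p}\le(1+\lambda)^{p}$. For the temporal term, I need that among all couplings of $\theta\sim\tau_{\mathbb{D}}$ and $\theta'\sim\tau_{\mathbb{D}}$, the expectation of $h(\theta-\lambda^{2}\theta')$ with $h(x)=|x|^{p/2}$ is maximized by the antitone coupling $\theta=Q(R)$, $\theta'=Q(1-R)$, with $R$ uniform on $(0,1)$. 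This holds because $h$ is convex when $p\ge2$. Then $(x,y)\mapsto h(x-\lambda^{2}y)$ is submodular: $-h(x-\lambda^2 y)$ has nonnegative mixed differences. The classical rearrangement inequality of Lorentz/Cambanis--Simons--Stout type then says that the maximum is attained at the countermonotone coupling. This gives
\[
\mathbf{E}_{\pi}|\theta-\lambda^{2}\theta'|^{p/2}\le\int_{0}^{1}\bigl|Q(r)-\lambda^{2}Q(1-r)\bigr|^{p/2}dr .
\]

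\emph{Attainment.} Take $\xi$ uniform on $\partial\mathbb{D}$ and $R$ uniform on $(0,1)$, independent. Set $\xi'=-\xi$, $\theta=Q(R)$ and $\theta'=Q(1-R)$. Then $(\xi,\theta)$ has independent components with the right marginals, so it has law $\mu_{\mathbb{D}}$. Likewise $(\xi',\theta')$ has independent components with law $\mu_{\mathbb{D}}$, so $(\lambda\xi',\lambda^{2}\theta')\sim\mu_{\lambda\mathbb{D}}$. Under this coupling both bounds are equalities, since $|\xi+\lambda\xi|=1+\lambda$. This gives the formula for $\Phi_{p}$.

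\emph{Independent coupling.} Now take $\xi,\xi'$ independent and uniform, and $\theta,\theta'$ independent copies of $\tau_{\mathbb{D}}$. By rotation invariance, conditioning on $\xi$ lets me set $\xi=1$ and $\xi'=e^{i\theta}$. Then $|1-\lambda e^{i\theta}|^{2}=1+\lambda^{2}-2\lambda\cos\theta$, which yields $I_{p}(\lambda)$. Writing $\theta=Q(r)$ and $\theta'=Q(u)$ with $r,u$ independent uniforms yields $J_{p/2}(\lambda)$.

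The main obstacle is making the antitone-optimality step rigorous, because $\tau_{\mathbb{D}}$ has unbounded support and the cost is unbounded. I would first prove the inequality for discrete (finitely supported) marginals by a pairwise swapping argument based on submodularity. I would then pass to the limit using quantile approximations, with truncation and dominated convergence justified by the finite moments of $\tau_{\mathbb{D}}$.
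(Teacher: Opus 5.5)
Your proposal is correct and follows essentially the same route as the paper. Both arguments use the independence of exit angle and exit time in the disc together with Brownian scaling, the pointwise bound $|\xi-\lambda\xi'|\le 1+\lambda$ with equality at $\xi'=-\xi$, the antitone quantile coupling for the convex temporal cost (which the paper takes directly from Cambanis--Simons--Stout), and a direct computation of $I_p$ and $J_{p/2}$ for the independent coupling. Your explicit joint coupling $(\xi,Q(R))$, $(-\xi,Q(1-R))$ with $\xi\perp R$ spells out what the paper only asserts when it says the two maximizing couplings ``can be combined,'' and your discretization step makes self-contained what the paper imports by citation.
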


\begin{prop}
$\dagger$ \label{prop:supremal and independent disc}Let $p\geq2$
and $\lambda\geq1$. 
\[
\Phi_{p}(\mathbb{D},\lambda\mathbb{D})-T^{\mathrm{ind}}_{p}(\mathbb{D},\lambda\mathbb{D})\ge\frac{(1+\lambda)^{p}-(\lambda-1)^{p}}{2p(1+\lambda)^{p-1}\left(1+\mathbf{E}(\tau^{p/2}_{\mathbb{D}})\right)^{(p-1)/p}}.
\]
In particular,
\[
\Phi_{p}(\mathbb{D},\lambda\mathbb{D})-T^{\mathrm{ind}}_{p}(\mathbb{D},\lambda\mathbb{D})\ge\frac{1}{p\left(1+\mathbf{E}(\tau^{p/2}_{\mathbb{D}})\right)^{(p-1)/p}}.
\]
\end{prop}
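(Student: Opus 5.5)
The plan is to compare the two explicit formulas of Proposition \ref{prop:Unit disc case} term by term, which gives a lower bound on $\Phi_{p}^{p}-(T^{\mathrm{ind}}_{p})^{p}$. I would then convert this into a bound on the difference of $p$-th roots using an a priori upper bound on $\Phi_{p}$. Write $M:=\mathbf{E}(\tau^{p/2}_{\mathbb{D}})$, $\Phi:=\Phi_{p}(\mathbb{D},\lambda\mathbb{D})$ and $T:=T^{\mathrm{ind}}_{p}(\mathbb{D},\lambda\mathbb{D})$.

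\emph{Temporal terms.} Since $p/2\ge1$, the map $x\mapsto|x|^{p/2}$ is convex. Hence, among all couplings of $\theta\sim\tau_{\mathbb{D}}$ and $\lambda^{2}\theta'$ with $\theta'\sim\tau_{\mathbb{D}}$, the antitone (quantile-reversed) coupling maximizes $\mathbf{E}|\theta-\lambda^{2}\theta'|^{p/2}$. This is the standard fact that the countermonotone plan is optimal for a submodular cost $-|x-y|^{p/2}$. It follows that
\[
\int^{1}_{0}\left|Q(r)-\lambda^{2}Q(1-r)\right|^{p/2}dr\ \ge\ J_{p/2}(\lambda).
\]

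\emph{Spatial terms.} Put $f(\theta)=(1+\lambda^{2}-2\lambda\cos\theta)^{p/2}$ and pair $\theta$ with $\theta+\pi$. Then
\[
f(\theta)+f(\theta+\pi)=g(b):=(a-b)^{p/2}+(a+b)^{p/2},\qquad a=1+\lambda^{2},\ b=2\lambda\cos\theta .
\]
The function $g$ is even and convex on $[-2\lambda,2\lambda]$, so its maximum is attained at $|b|=2\lambda$. This gives $f(\theta)+f(\theta+\pi)\le(\lambda+1)^{p}+(\lambda-1)^{p}$; here $\lambda\ge1$ is used, so that $|\lambda-1|=\lambda-1$. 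Averaging over $\theta$ yields $I_{p}(\lambda)\le\frac{1}{2}\bigl((1+\lambda)^{p}+(\lambda-1)^{p}\bigr)$. Combining the two parts,
\[
\Phi^{p}-T^{p}\ \ge\ D:=\frac{(1+\lambda)^{p}-(\lambda-1)^{p}}{2}.
\]

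\emph{From $p$-th powers to the costs.} Since $0\le T\le\Phi$, the mean value theorem gives $\Phi^{p}-T^{p}\le p\,\Phi^{p-1}(\Phi-T)$. It remains to bound $\Phi$ from above. For any coupling, $|\xi-\lambda\xi'|\le1+\lambda$. Also
\[
|\theta-\lambda^{2}\theta'|^{p/2}\le\max(\theta,\lambda^{2}\theta')^{p/2}\le\theta^{p/2}+\lambda^{p}\theta'^{p/2},
\]
whose expectation is at most $(1+\lambda^{p})M\le(1+\lambda)^{p}M$. Hence $\Phi^{p}\le(1+\lambda)^{p}(1+M)$, and therefore $\Phi^{p-1}\le(1+\lambda)^{p-1}(1+M)^{(p-1)/p}$. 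Dividing $D$ by $p$ times this bound gives exactly the first inequality of the statement.

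\emph{The ``in particular'' bound.} Since $\lambda-1<\lambda+1$,
\[
(1+\lambda)^{p}-(\lambda-1)^{p}\ge(1+\lambda)^{p-1}\bigl((1+\lambda)-(\lambda-1)\bigr)=2(1+\lambda)^{p-1}.
\]
Substituting this into the first bound, the factor $(1+\lambda)^{p-1}$ cancels and we obtain $1/\bigl(p(1+M)^{(p-1)/p}\bigr)$.

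I expect the main obstacle to be the temporal comparison, namely justifying rigorously that the reversed quantile coupling dominates the independent one for the convex cost $|x-\lambda^{2}y|^{p/2}$. I would handle it via the rearrangement inequality for submodular functions, approximating by discrete uniform laws and passing to the limit using the finiteness of $M$.
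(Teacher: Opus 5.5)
Your proof is correct and follows the paper's argument: the countermonotone comparison of the temporal terms, the convexity bound $I_{p}(\lambda)\le\frac{1}{2}\bigl((1+\lambda)^{p}+(\lambda-1)^{p}\bigr)$ (your antipodal pairing is equivalent to the paper's chord bound in $x=\cos\theta$), the mean value step, and the a priori bound $\Phi_{p}^{p}\le(1+\lambda)^{p}\bigl(1+\mathbf{E}(\tau^{p/2}_{\mathbb{D}})\bigr)$, which you obtain from the pointwise estimate $|\theta-\lambda^{2}\theta'|\le\max(\theta,\lambda^{2}\theta')$ rather than from the paper's Minkowski inequality. One terminology slip: $-|x-y|^{p/2}$ is supermodular, not submodular (it is $|x-y|^{p/2}$ that is submodular for $p\ge2$), and that is exactly why the countermonotone plan maximizes $\mathbf{E}|\theta-\lambda^{2}\theta'|^{p/2}$.
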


Proposition \ref{prop:supremal and independent disc} implies that
the independent-path coupling is not supermal, neither for a fixed
$\lambda$ nor asymptotically, in this special case.

By applying the established formulas of $T^{\mathrm{same}}_{2}$ and
$T^{\mathrm{ind}}_{2}$ in the proof of Theorem \ref{thm:same path vs ind path}
to the pair $(U,\lambda U)$ when $U$ is starlike and $\lambda\geq1$,
we get 
\[
T^{\mathrm{same}}_{2}(U,\lambda U)=\sqrt{3(\lambda^{2}-1)M_{2}}
\]
\[
T^{\mathrm{ind}}_{2}(U,\lambda U)=\sqrt{3(1+\lambda^{2})M_{2}-2\mathbf{E}(\theta\wedge\lambda^{2}\theta')}
\]
where $\theta,\theta'$ are two independent copies of $\tau_{U}$.
In particular, we obtain
\begin{equation}
T^{\mathrm{same}}_{2}(U,\lambda U)\leq\sqrt{(3\lambda^{2}+1)M_{2}}\leq T^{\mathrm{ind}}_{2}(U,\lambda U)\leq\sqrt{3(\lambda^{2}+1)M_{2}}.\label{eq:ind same inequality}
\end{equation}
In particular,
\[
T^{\mathrm{ind}}_{2}(U,\lambda U)\ge2\sqrt{M_{2}},\qquad\lambda\ge1.
\]
Thus the independent-path cost admits a positive lower bound independent
of $\lambda$. Moreover, this lower bound is not attained: there is
no such domain $U$ for which
\[
T^{\mathrm{ind}}_{2}(U,U)=2\sqrt{M_{2}}.
\]

\begin{figure}[H]
\centering{}\includegraphics[width=10cm,totalheight=10cm,keepaspectratio]{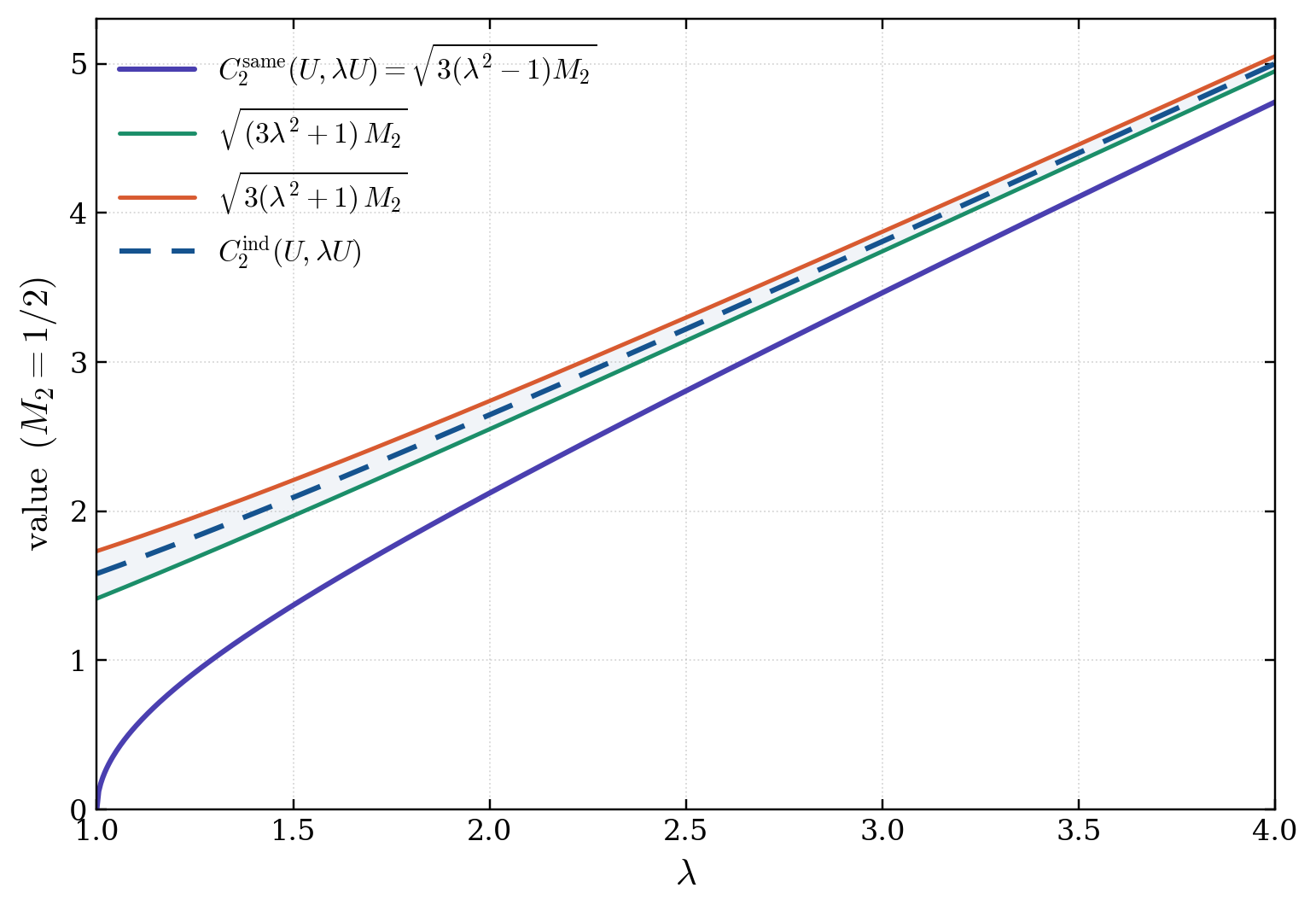}\caption{Plots of the costs for $p=2$. $M_{2}$ is set $\frac{1}{2}$, which
corresponds to $U=\mathbb{D}$ for example.}
\end{figure}

\begin{cor}
\label{thm:same pth dilation} Let $U\in\mathcal{S}_{2}$ be a bounded
domain which is starlike with respect to the origin. Then
\[
T^{\mathrm{same}}_{2}(U,\lambda U)-\Lambda_{2}(U,\lambda U)\geq\sqrt{\frac{2(\lambda-1)}{3\lambda+1}M_{2}}.
\]
In particular, when $\lambda>1$, neither the same-path nor the independent-path
couplings are optimal. 
\end{cor}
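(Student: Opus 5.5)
We work with $\lambda\ge1$, the regime of the displayed formulas preceding the statement; at $\lambda=1$ both sides vanish. The plan is to compare the two explicit closed forms already available and bound their difference via a difference of squares.

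First, since $U$ is starlike with respect to the origin and $\lambda\ge1$, we have $U\subset\lambda U$. Under the same-path coupling this gives $\tau_U\le\tau_{\lambda U}$ and $\tau_{U\cap\lambda U}=\tau_U$. The formula from the proof of Theorem \ref{thm:same path vs ind path} then yields
\[
T^{\mathrm{same}}_{2}(U,\lambda U)^{2}=3\bigl(\mathbf{E}\tau_{\lambda U}-\mathbf{E}\tau_{U}\bigr)=3(\lambda^{2}-1)M_{2},
\]
using Brownian scaling $\mathbf{E}\tau_{\lambda U}=\lambda^{2}M_{2}$; this is the formula already stated before the corollary. The corollary to Theorem \ref{thm:scaling optimal} gives $\Lambda_{2}(U,\lambda U)^{2}=(3\lambda^{2}-4\lambda+1)M_{2}$. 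Boundedness of $U$ guarantees $M_2<\infty$, so all quantities are finite. Subtracting,
\[
T^{\mathrm{same}}_{2}(U,\lambda U)^{2}-\Lambda_{2}(U,\lambda U)^{2}=4(\lambda-1)M_{2}.
\]

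Next, write $T-\Lambda=(T^{2}-\Lambda^{2})/(T+\Lambda)$, with $T=T^{\mathrm{same}}_2(U,\lambda U)$ and $\Lambda=\Lambda_2(U,\lambda U)$. The only real step is to bound $T+\Lambda$ from above. We use $(T+\Lambda)^{2}\le2(T^{2}+\Lambda^{2})$, where
\[
T^{2}+\Lambda^{2}=(6\lambda^{2}-4\lambda-2)M_{2}=2(3\lambda+1)(\lambda-1)M_{2}.
\]
This gives $T+\Lambda\le2\sqrt{(3\lambda+1)(\lambda-1)M_{2}}$, and therefore
\[
T-\Lambda\ge\frac{4(\lambda-1)M_{2}}{2\sqrt{(3\lambda+1)(\lambda-1)M_{2}}}=2\sqrt{\frac{(\lambda-1)M_{2}}{3\lambda+1}}\ge\sqrt{\frac{2(\lambda-1)}{3\lambda+1}M_{2}},
\]
which is the claim, with room to spare. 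When $\lambda=1$ the denominator vanishes, but both sides are $0$ and the inequality holds trivially. Alternatively, the cruder bound $T+\Lambda\le2T$ also works; it gives the claim once $\lambda\ge\tfrac13$.

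For the final assertion, let $\lambda>1$. Then $M_{2}>0$, since $U$ contains a disc about $0$, so the gap above is strictly positive and the same-path coupling is not optimal. For the independent coupling, \eqref{eq:ind same inequality} gives $T^{\mathrm{ind}}_{2}(U,\lambda U)^{2}\ge(3\lambda^{2}+1)M_{2}$. This strictly exceeds $(3\lambda^{2}-4\lambda+1)M_{2}=\Lambda_2(U,\lambda U)^2$ because $4\lambda M_{2}>0$, so the independent coupling is not optimal either.

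There is no real obstacle here. The only point needing care is justifying the same-path formula through the nesting $U\subset\lambda U$, which is exactly where starlikeness enters.
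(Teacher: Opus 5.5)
Your proof is correct and follows the route the paper intends. The paper gives no separate proof; the corollary comes from the two closed forms $T^{\mathrm{same}}_2(U,\lambda U)^2=3(\lambda^2-1)M_2$ (using $U\subset\lambda U$ from starlikeness) and $\Lambda_2(U,\lambda U)^2=(3\lambda^2-4\lambda+1)M_2$, exactly as you do, with the displayed lower bound $T^{\mathrm{ind}}_2(U,\lambda U)^2\ge(3\lambda^2+1)M_2$ handling the independent coupling. Your estimate $(T+\Lambda)^2\le 2(T^2+\Lambda^2)$ even yields the stated gap with an extra factor $\sqrt{2}$, and your restriction to $\lambda\ge1$ is the implicit hypothesis needed for the right-hand side to be real.
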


\begin{prop}
Let $U\subset V$ be bounded domains. Then 
\[
\sqrt{(3(1+\frac{\delta}{R})^{2}+1)M_2}\leq T^{\mathrm{ind}}_{2}(U,V),
\]
where $R:=\sup_{z\in\overline{U}}|z|$ and $\delta:=\text{dist}(\overline{U},V^{c})$.
\end{prop}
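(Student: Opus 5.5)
The plan is to reduce the statement to the dilation case $V=\lambda U$ and then apply two earlier facts: the monotonicity of $T^{\mathrm{ind}}_{2}$ (Proposition \ref{prop:monotonicity}) and the explicit formula for $T^{\mathrm{ind}}_{2}(U,\lambda U)$ from the proof of Theorem \ref{thm:same path vs ind path}. Throughout, $M_{2}=\mathbf{E}(\tau_{U})$ refers to the smaller domain $U$. Set
\[
\lambda:=1+\frac{\delta}{R}.
\]

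\emph{Step 1: $\lambda U\subset V$.} Take $z\in U$ and write $\lambda z=z+\frac{\delta}{R}z$. Since $|z|\le R$, we get $\bigl|\frac{\delta}{R}z\bigr|\le\delta$, so $\lambda z$ lies within distance $\delta$ of $\overline{U}$. By definition of $\delta=\text{dist}(\overline{U},V^{c})$, every point at distance strictly less than $\delta$ from $\overline{U}$ belongs to $V$.

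The equality case $|\lambda z-z|=\delta$ is the only delicate point. I would avoid it by working with $\lambda'=1+\frac{\delta'}{R}$ for $\delta'<\delta$. Then $\lambda'U\subset V$ holds with strict inequalities, and at the end I let $\delta'\uparrow\delta$; the final bound is continuous in $\lambda'$, so nothing is lost. No starlikeness of $U$ is needed here, because we only use $|z|\le R$ on $U$.

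\emph{Step 2: monotonicity.} Proposition \ref{prop:monotonicity} says $T^{\mathrm{ind}}_{2}$ is non-decreasing in each argument. Applied to $\lambda' U\subset V$ in the second argument, it gives
\[
T^{\mathrm{ind}}_{2}(U,V)\ge T^{\mathrm{ind}}_{2}(U,\lambda' U).
\]
This is legitimate since $U$, $\lambda'U$ and $V$ are bounded, hence all have finite second moments.

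\emph{Step 3: lower bound for the dilation.} From the computation in the proof of Theorem \ref{thm:same path vs ind path}, using Brownian scaling $\tau_{\lambda U}\sim\lambda^{2}\tau_{U}$,
\[
T^{\mathrm{ind}}_{2}(U,\lambda' U)^{2}=3(1+\lambda'^{2})M_{2}-2\,\mathbf{E}(\theta\wedge\lambda'^{2}\theta'),
\]
where $\theta,\theta'$ are independent copies of $\tau_{U}$. This identity does not use starlikeness either. Bounding $\mathbf{E}(\theta\wedge\lambda'^{2}\theta')\le\mathbf{E}(\theta)=M_{2}$ yields
\[
T^{\mathrm{ind}}_{2}(U,\lambda' U)^{2}\ge(3\lambda'^{2}+1)M_{2}.
\]
This is the middle inequality of \ref{eq:ind same inequality}, and this derivation of it holds without the starlike hypothesis. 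Combining with Step 2 and letting $\delta'\uparrow\delta$ gives the claimed bound.

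The main obstacle is Step 1: making sure the inclusion $\lambda U\subset V$ follows from the metric data $(R,\delta)$ alone for non-starlike $U$, together with the boundary equality case, which the limiting argument in $\delta'$ handles. The rest is a direct combination of results already established.
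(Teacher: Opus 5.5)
Your proposal is correct and is essentially the paper's proof: take $\lambda=1+\delta/R$, use the monotonicity of $T^{\mathrm{ind}}_{2}$ in its second argument, and apply $T^{\mathrm{ind}}_{2}(U,\lambda U)^{2}\ge(3\lambda^{2}+1)M_{2}$, which, as you rightly observe, needs no starlikeness. Your $\delta'\uparrow\delta$ device is harmless but unnecessary: every $z$ in the open set $U$ satisfies $|z|<R$, so for $\delta>0$ one gets $|\lambda z-z|<\delta$ and hence $\lambda U\subset V$ directly, which is the inclusion monotonicity actually requires, rather than the paper's $\lambda U\subset\overline{V}$.
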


\begin{proof}
For $\lambda=1+\frac{\delta}{R}$ we have $\lambda U\subset\overline{V}$.
By the monotonicity property in Proposition \ref{prop:monotonicity}
along with \ref{eq:ind same inequality}, we get 
\[
\sqrt{(3(1+\frac{\delta}{R})^{2}+1)\mathbf{E}(\tau_{U})}\leq T^{\mathrm{ind}}_{2}(U,\lambda U)\leq T^{\mathrm{ind}}_{2}(U,V).
\]
\end{proof}

\section{Lifting with Brownian motion }\label{sec:Lifting-with-Brownian}

The definition of the optimal and supremal Brownian costs is made
at the level of exit-pair laws. In particular, neither the optimal
nor the supremal costs need to be the same-path or the independent-path
couplings. Nevertheless, the connection with certain Brownian paths
is still obtainable. Let
\[
\Omega:=\mathcal{C}([0,\infty),\mathbb{C})
\]
be the space of continuous paths in the plane. Let $\mathbf{W}$denote
planar Wiener measure on $\Omega$, started at the origin. For a domain
$D$ containing $0$, define
\[
\tau_{D}(\omega):=\inf\{t\geq0:\omega(t)\notin D\},
\]
and define the exit-pair map
\[
\Xi_{D}:\Omega\to\mathbb{C}\times[0,\infty),\qquad\Xi_{D}(\omega)=\bigl(\omega(\tau_{D}(\omega)),\tau_{D}(\omega)\bigr).
\]
Thus
\[
\mu_{D}=(\Xi_{D})_{\#}\mathbf{W}
\]
is the Brownian exit-pair law associated with $D$.
\begin{thm}
\label{thm:=00005BLifting-of-exit-pair}Let $U,V\in\mathcal{S}_{p}$,
and let
\[
\pi\in\Pi(\mu_{U},\mu_{V})
\]
be any coupling of the two exit-pair laws. Then there exists a coupling
\[
\Gamma\in\Pi(\mathbf{W},\mathbf{W})
\]
of two Brownian path laws such that
\[
(\Xi_{U},\Xi_{V})_{\#}\Gamma=\pi.
\]
Equivalently, there is a probability space supporting two Brownian
paths $(B_{t})_{t\geq0}$ and $(Z_{t})_{t\geq0}$, both started at
$0$, such that
\[
\bigl((B_{\tau^{B}_{U}},\tau^{B}_{U}),(Z_{\tau^{Z}_{V}},\tau^{Z}_{V})\bigr)\sim\pi,
\]
where
\[
\tau^{B}_{U}:=\inf\{t\geq0:B_{t}\notin U\},\qquad\tau^{Z}_{V}:=\inf\{t\geq0:Z_{t}\notin V\}.
\]
\end{thm}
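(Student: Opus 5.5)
This is a gluing, or disintegration, argument on Polish spaces. The path space $\Omega=\mathcal{C}([0,\infty),\mathbb{C})$ with the topology of locally uniform convergence is Polish, and $E=\mathbb{C}\times[0,\infty)$ is Polish. The maps $\tau_{D}$ and $\Xi_{D}$ are Borel measurable: $\tau_{D}$ is the hitting time of the closed set $D^{c}$ by continuous paths, hence lower semicontinuous and in particular Borel, and $\omega\mapsto\omega(\tau_{D}(\omega))$ is Borel as a composition of Borel maps.

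The first point to check is that $\tau_{U}<\infty$ holds $\mathbf{W}$-a.s. for $U,V\in\mathcal{S}_{p}$. This follows from $\mathbf{E}(\tau_{U}^{p/2})<\infty$, so $\Xi_{U}$ is defined $\mathbf{W}$-a.s. On the null set where $\tau_{U}=\infty$ we set $\Xi_{U}$ to an arbitrary fixed value. Then $\mu_{U}=(\Xi_{U})_{\#}\mathbf{W}$ is a genuine probability measure on $E$.

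Next comes the disintegration step. Since $\Omega$ is Polish and $\Xi_{U}$ is Borel, the disintegration theorem gives a measurable kernel $x\mapsto\mathbf{W}^{U}_{x}$ from $E$ to probability measures on $\Omega$ with
\[
\mathbf{W}=\int_{E}\mathbf{W}^{U}_{x}\,d\mu_{U}(x),\qquad \mathbf{W}^{U}_{x}\bigl(\Xi_{U}^{-1}(\{x\})\bigr)=1\quad\text{for }\mu_{U}\text{-a.e. }x.
\]
In words, $\mathbf{W}^{U}_{x}$ is the conditional law of the Brownian path given that its exit pair from $U$ equals $x$. We construct the analogous kernel $y\mapsto\mathbf{W}^{V}_{y}$ for $V$ in the same way.

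We then define $\Gamma$ on $\Omega\times\Omega$ by
\[
\Gamma(d\omega,d\omega'):=\int_{E\times E}\mathbf{W}^{U}_{x}(d\omega)\,\mathbf{W}^{V}_{y}(d\omega')\,d\pi(x,y).
\]
In other words, we sample $(x,y)\sim\pi$ and then draw the two paths conditionally independently from their respective conditional laws. Two verifications remain:
\begin{itemize}
\item The first marginal of $\Gamma$ is $\int\mathbf{W}^{U}_{x}\,d\mu_{U}(x)=\mathbf{W}$, because the first marginal of $\pi$ is $\mu_{U}$. The second marginal is $\mathbf{W}$ by the same argument, so $\Gamma\in\Pi(\mathbf{W},\mathbf{W})$.
\item For Borel sets $A,B\subset E$, the concentration property of the kernels gives $\mathbf{W}^{U}_{x}(\Xi_{U}\in A)=\mathbf{1}_{A}(x)$ for $\mu_{U}$-a.e. $x$, and the analogous identity holds for $V$. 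Hence $\Gamma(\Xi_{U}\in A,\Xi_{V}\in B)=\pi(A\times B)$. A $\pi$-null exceptional set causes no problem, since the marginals of $\pi$ are $\mu_{U}$ and $\mu_{V}$. Rectangles determine the measure, so $(\Xi_{U},\Xi_{V})_{\#}\Gamma=\pi$.
\end{itemize}

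The probabilistic reformulation follows by taking the canonical space $(\Omega^{2},\Gamma)$ with coordinate processes $B,Z$. Each coordinate has law $\mathbf{W}$ and is therefore a planar Brownian motion started at $0$, with respect to its own natural filtration.

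The main obstacle I expect is the measure-theoretic bookkeeping rather than any probabilistic difficulty. This consists of the Borel measurability of $\Xi_{D}$ on $\Omega$ (including the infinite-time null set) and the validity of disintegration, which needs a Polish or standard Borel setting. A cleaner alternative is to invoke the gluing lemma already used in the proof of Theorem \ref{thm:Lambda is dist}. We glue the couplings $(\mathrm{id},\Xi_{U})_{\#}\mathbf{W}\in\Pi(\mathbf{W},\mu_{U})$, then $\pi$, then $(\Xi_{V},\mathrm{id})_{\#}\mathbf{W}\in\Pi(\mu_{V},\mathbf{W})$ on $\Omega\times E\times E\times\Omega$, and read off the $(\Omega,\Omega)$-marginal. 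The gluing lemma requires only Polish spaces, which packages the disintegration step.

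A final remark: the Brownian paths obtained this way need not be adapted to a common filtration, and nothing beyond equality in law is claimed. Indeed, the statement asserts nothing more.
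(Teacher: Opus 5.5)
Your proposal is correct and is essentially the paper's proof: you disintegrate $\mathbf{W}$ along $\Xi_{U}$ and $\Xi_{V}$, draw $(x,y)\sim\pi$, sample the two paths conditionally independently from the kernels, and then check the marginals and the pushforward. Your extra bookkeeping (Borel measurability of $\Xi_{D}$ via lower semicontinuity of $\tau_{D}$, and handling the $\mathbf{W}$-null set $\{\tau_{U}=\infty\}$) fills in details the paper leaves implicit, and your gluing-lemma variant is only a repackaging of the same disintegration.
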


\begin{proof}
Since $\Omega$ is a Polish space, we may disintegrate Wiener measure
with respect to the exit-pair maps. Thus there exist regular conditional
probability kernels $\vartheta_{U}$ and $\vartheta_{V}$ such that
\[
\mathbf{W}(d\omega)=\int\vartheta_{U}(a,d\omega)\,\mu_{U}(da),
\]
and
\[
\mathbf{W}(d\eta)=\int\vartheta_{V}(b,d\eta)\,\mu_{V}(db).
\]
Here $\vartheta_{U}(a,\cdot)$ is the conditional law of a Brownian
path given $\Xi_{U}=a$, and similarly for $\vartheta_{V}$. Given
$\pi\in\Pi(\mu_{U},\mu_{V})$, define a probability measure $\Gamma$
on $\Omega\times\Omega$ by
\[
\Gamma(d\omega,d\eta):=\int\vartheta_{U}(a,d\omega)\,\vartheta_{V}(b,d\eta)\,\pi(da,db).
\]
Its first marginal is
\[
\int\vartheta_{U}(a,d\omega)\,\mu_{U}(da)=\mathbf{W}(d\omega),
\]
and its second marginal is
\[
\int\vartheta_{V}(b,d\eta)\,\mu_{V}(db)=\mathbf{W}(d\eta).
\]
Hence $\Gamma\in\Pi(\mathbf{W},\mathbf{W})$. Moreover, since $\vartheta_{U}(a,\cdot)$-almost
surely $\Xi_{U}=a$, and $\vartheta_{V}(b,\cdot)$-almost surely $\Xi_{V}=b$,
we obtain
\[
(\Xi_{U},\Xi_{V})_{\#}\Gamma=\pi.
\]
This proves the claim.
\end{proof}

The lifting Theorem \ref{thm:=00005BLifting-of-exit-pair} may be
interpreted as follows. The transport plan $\pi$ specifies how the
exit pairs from $U$ are coupled with the exit pairs from $V$. It
does not, by itself, specify how the full Brownian paths should be
coupled. The disintegration of Wiener measure fills in this missing
path information: after sampling a pair of exit data $(a,b)$ according
to $\pi$, one samples independently a Brownian path conditioned on
$\Xi_{U}=a$ and a Brownian path conditioned on $\Xi_{V}=b$. The
resulting path-level coupling has Wiener measure as both marginals,
and its exit-pair pushforward is precisely $\pi$.
\begin{cor}
Let $U,V\in\mathcal{S}_{p}$. Then there exist two Brownian paths
$(B_{t})_{t\geq0}$ and $(Z_{t})_{t\geq0}$, both started at $0$,
such that
\[
\Lambda_{p}(U,V)^{p}=\mathbf{E}\left(|B_{\tau^{B}_{U}}-Z_{\tau^{Z}_{V}}|^{p}+|\tau^{B}_{U}-\tau^{Z}_{V}|^{\frac{p}{2}}\right).
\]
Similarly, there exist two Brownian paths $(\widetilde{B}_{t})_{t\geq0}$
and $(\widetilde{Z}_{t})_{t\geq0}$, both started at $0$, such that
\[
\Phi_{p}(U,V)^{p}=\mathbf{E}\left(|\widetilde{B}_{\tau^{\widetilde{B}}_{U}}-\widetilde{Z}_{\tau^{\widetilde{Z}}_{V}}|^{p}+|\tau^{\widetilde{B}}_{U}-\tau^{\widetilde{Z}}_{V}|^{\frac{p}{2}}\right).
\]
\end{cor}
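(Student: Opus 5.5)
The corollary follows by combining the attainment results of Section \ref{sec:A-Brownian-Wasserstein-distance} with the lifting Theorem \ref{thm:=00005BLifting-of-exit-pair}. The plan has three steps.

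\emph{Step 1: an optimal plan exists.} Since $U,V\in\mathcal{S}_{p}$, the moment condition \ref{eq:existing assumption} holds. Here we use \ref{time implies position} together with the equivalence of the two descriptions of $\mathcal{S}_{p}$. So $c=d^{p}_{p}$ is integrable against every $\pi\in\Pi(\mu_{U},\mu_{V})$, by \ref{eq:C_p}. By the existence argument recalled after the definition of $\Lambda_{p}$, there is a plan $\pi^{*}$ with
\[
\int c\,d\pi^{*}=\Lambda_{p}(U,V)^{p}.
\]
That argument uses narrow compactness of $\Pi(\mu_{U},\mu_{V})$ on the Polish space $E$ and lower semicontinuity of $\pi\mapsto\int c\,d\pi$ for the continuous nonnegative cost $c$.

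\emph{Step 2: a supremal plan exists.} Rewrite $\int c\,d\pi=\int A\,d\mu_{U}+\int B\,d\mu_{V}-\int h\,d\pi$, where $h$ is the nonnegative lower semicontinuous function in \ref{eq:h for Phi}. The two first terms are finite and do not depend on $\pi$. Minimizing $\int h\,d\pi$ over the compact set $\Pi(\mu_{U},\mu_{V})$ therefore gives a plan $\widetilde{\pi}$ with
\[
\int c\,d\widetilde{\pi}=\Phi_{p}(U,V)^{p}.
\]

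\emph{Step 3: lift both plans to Brownian paths.} Apply Theorem \ref{thm:=00005BLifting-of-exit-pair} to $\pi^{*}$. This gives a probability space carrying two Brownian motions $B,Z$ started at $0$ with
\[
\bigl((B_{\tau^{B}_{U}},\tau^{B}_{U}),(Z_{\tau^{Z}_{V}},\tau^{Z}_{V})\bigr)\sim\pi^{*}.
\]
The expectation $\mathbf{E}\bigl(|B_{\tau^{B}_{U}}-Z_{\tau^{Z}_{V}}|^{p}+|\tau^{B}_{U}-\tau^{Z}_{V}|^{p/2}\bigr)$ is then exactly $\int c\,d\pi^{*}$, by the change of variables formula for pushforward measures. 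Applying the same theorem to $\widetilde{\pi}$ gives $\widetilde{B},\widetilde{Z}$ and the second identity.

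\emph{Expected obstacle.} The only delicate point is the attainment of the supremum in Step 2, since $c$ is unbounded. The reduction to the nonnegative lower semicontinuous cost $h$ handles this, and it works precisely because $\int A\,d\mu_{U}$ and $\int B\,d\mu_{V}$ are finite. That finiteness is exactly the membership $U,V\in\mathcal{S}_{p}$, which is why the corollary is stated under this hypothesis.
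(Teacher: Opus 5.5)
Your proposal is correct and follows the paper's intended argument, which the paper leaves implicit: an optimal plan and a supremal plan exist by the compactness arguments given after the definitions of $\Lambda_{p}$ and $\Phi_{p}$, and applying the lifting theorem to each yields the two pairs of Brownian paths. You also rightly note that the supremal case goes through the reduction to the nonnegative cost $h$, which needs exactly the finite moments guaranteed by $U,V\in\mathcal{S}_{p}$.
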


The previous corollary should not be confused with the same-path coupling.
The coupling $\Gamma^{\ast}$ need not be supported on pairs $(\omega,\omega)$.
In general, the two paths are not equal, and the construction is not
necessarily adapted to a common Brownian filtration. The paths are
Brownian in their marginal laws, and their dependence is created through
the optimally coupled exit pairs.

\section{Numerical framework }\label{sec:Numerical-framework}

Now we provide a numerical framework in which we illustrate how to
construct the two Brownian paths. The scheme is based on the matching
problem. The finite matching problem asks for an optimal one-to-one
assignment between two finite sets. Given two sets
\[
A=\{a_{1},\ldots,a_{n}\},\qquad B=\{b_{1},\ldots,b_{n}\},
\]
and a cost matrix $C=(c_{ij})$, one seeks a permutation $\sigma\in S_{n}$
minimizing
\[
\sum^{n}_{i=1}c_{i,\sigma(i)}.
\]
Equivalently, this is the problem of finding a minimum-weight perfect
matching in a complete bipartite graph. Although a brute-force search
over all permutations has factorial complexity, the problem is polynomial-time
solvable. The Hungarian algorithm, introduced by Kuhn and refined
by Munkres, gives a classical polynomial-time method; standard implementations
solve the square assignment problem in $O(n^{3})$ time. Thus the
matching problem is combinatorial but computationally tractable in
its classical linear form, in contrast with higher-order assignment
variants, which are typically much harder. We refer the reader to
\cite{peyre2019computational} for more details about the algorithmic
optimal transport theory. For empirical measures with equal masses,
\[
\mu=\frac{1}{n}\sum^{n}_{i=1}\delta_{x_{i}},\qquad\nu=\frac{1}{n}\sum^{n}_{j=1}\delta_{y_{j}}.
\]
The Kantorovich optimal transport problem reduces to the linear assignment,
or bipartite matching, problem:
\[
\min_{\sigma\in S_{n}}\frac{1}{n}\sum^{n}_{i=1}c(x_{i},y_{\sigma(i)}).
\]
Equivalently, the optimal transport matrix may be chosen to be a permutation
matrix. 

We adapt this matching scheme to our problem as follows. Suppose that
we simulate $N$ independent standard Brownian paths in $U$,
\[
B^{1},\ldots,B^{N},
\]
and $N$ independent Brownian paths in $V$,
\[
Z^{1},\ldots,Z^{N}.
\]
Let
\[
a_{i}:=\bigl(B^{i}_{\tau^{i}_{U}},\tau^{i}_{U}\bigr),\qquad b_{j}:=\bigl(Z^{j}_{\tau^{j}_{V}},\tau^{j}_{V}\bigr)
\]
be their underlying exit pairs. The empirical exit-pair laws are
\[
\widehat{\mu}^{N}_{U}=\frac{1}{N}\sum^{N}_{i=1}\delta_{a_{i}},\qquad\widehat{\mu}^{N}_{V}=\frac{1}{N}\sum^{N}_{j=1}\delta_{b_{j}}.
\]
The empirical optimal transport problem is then the assignment problem
\[
\sigma\in S_{N}\longmapsto\frac{1}{N}\sum^{N}_{i=1}d_{p}(a_{i},b_{\sigma(i)})^{p},
\]
Let $\sigma_{N}$ (resp. $\varpi_{N}$) be a minimizing permutation
(resp. a maximizing permutation) . Then
\[
\widehat{\pi}_{\sigma_{N}}:=\frac{1}{N}\sum^{N}_{i=1}\delta_{(a_{i},b_{\sigma_{N}(i)})}
\]
is an optimal coupling of the empirical exit-pair laws $\ensuremath{\widehat{\mu}^{N}_{U}}\text{ and }\ensuremath{\widehat{\mu}^{N}_{V}}$.
Similarly 
\[
\widehat{\pi}_{\varpi_{N}}:=\frac{1}{N}\sum^{N}_{i=1}\delta_{(a_{i},b_{\varpi_{N}(i)})}
\]
is a supremal coupling of the empirical exit-pair laws $\ensuremath{\widehat{\mu}^{N}_{U}}\text{ and }\ensuremath{\widehat{\mu}^{N}_{V}}$.
The corresponding empirical path coupling is
\[
\widehat{\Gamma}_{\sigma_{N}}:=\frac{1}{N}\sum^{N}_{i=1}\delta_{(B^{i},Z^{\sigma_{N}(i)})}
\]
for the optimal coupling and 
\[
\widehat{\Gamma}_{\varpi_{N}}:=\frac{1}{N}\sum^{N}_{i=1}\delta_{(B^{i},Z^{\varpi_{N}(i)})}
\]
for the supremal one. By construction,
\[
(\Xi_{U},\Xi_{V})_{\#}\widehat{\Gamma}_{\sigma_{N}}=\widehat{\pi}_{\sigma_{N}},\,\,\,\,(\Xi_{U},\Xi_{V})_{\#}\widehat{\Gamma}_{\varpi_{N}}=\widehat{\pi}_{\varpi_{N}}.
\]
Thus the numerical procedure consists of two steps: first solve the
finite optimal transport problem for the exit pairs, and then pair
the entire Brownian paths according to the optimal matching. The empirical
optimal value is
\[
\widehat{\Lambda}_{p,N}(U,V)^{p}=\frac{1}{N}\sum^{N}_{i=1}\left(|B^{i}_{\tau^{i}_{U}}-Z^{\sigma_{N}(i)}_{\tau^{\sigma_{N}(i)}_{V}}|^{p}+|\tau^{i}_{U}-\tau^{\sigma_{N}(i)}_{V}|^{\frac{p}{2}}\right)=\mathbf{E}_{\widehat{\Gamma}_{\sigma_{N}}}\left[d_{p}\bigl(\Xi_{U}(\omega),\Xi_{V}(\eta)\bigr)^{p}\right].
\]
Similarly, the empirical supremal value  is 
\[
\widehat{\Phi}_{p,N}(U,V)^{p}=\frac{1}{N}\sum^{N}_{i=1}\left(|B^{i}_{\tau^{i}_{U}}-Z^{\varpi_{N}(i)}_{\tau^{\varpi_{N}(i)}_{V}}|^{p}+|\tau^{i}_{U}-\tau^{\varpi_{N}(i)}_{V}|^{\frac{p}{2}}\right)=\mathbf{E}_{\widehat{\Gamma}_{\varpi_{N}}}\left[d_{p}\bigl(\Xi_{U}(\omega),\Xi_{V}(\eta)\bigr)^{p}\right].
\]
Note that this empirical construction does not select one single best
pair of paths. It produces a global matching of the whole simulated
$U$-sample with the whole simulated $V$-sample. A randomly chosen
matched pair
\[
(B^{i},Z^{\sigma_{N}(i)})
\]
for example, is then one sample from the empirical path coupling $\widehat{\Gamma}_{\sigma_{N}}$.
Furthermore, if the exit pairs are sampled as described above and
$U,V\in\mathcal{S}_{p}$. Then, almost surely,
\[
\widehat{\Lambda}_{p,N}(U,V)\longrightarrow\Lambda_{p}(U,V)\qquad\text{as }N\to+\infty
\]
and 
\[
\widehat{\Phi}_{p,N}(U,V)\longrightarrow\Phi_{p}(U,V)\qquad\text{as }N\to+\infty.
\]
Note that the convergence of $\widehat{\Phi}_{p,N}(U,V)$ is not as
immediate as the convergence of $\widehat{\Lambda}_{p,N}(U,V)$. However,
it follows from the positivity of $h$ appeared in \ref{eq:h for Phi}. 

\begin{figure}
\begin{centering}
\includegraphics[width=15cm,totalheight=15cm,keepaspectratio]{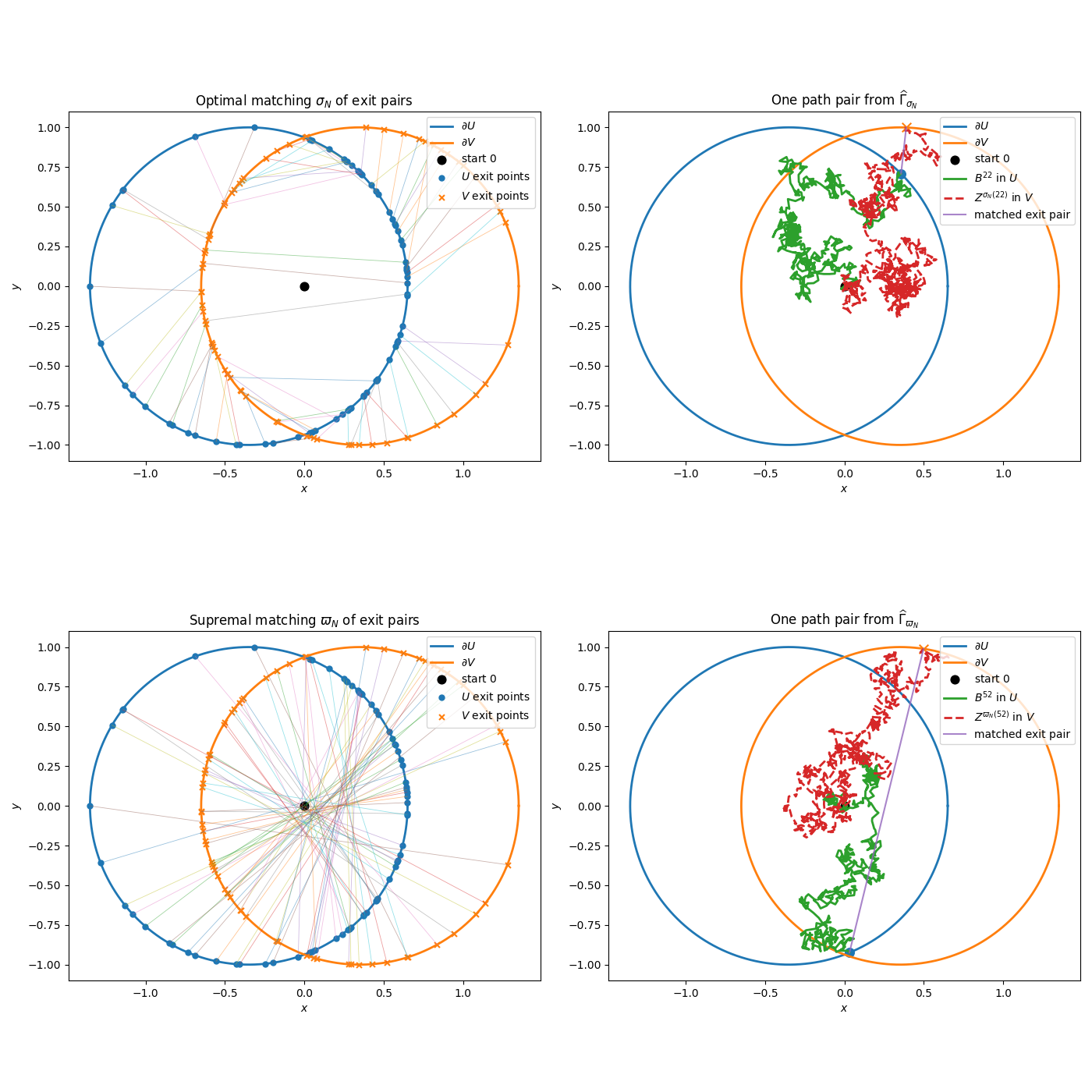}\caption{Empirical Brownian transport between two non-nested domains $U$ and
$V$. }
\par\end{centering}
The left panels show the matching of simulated Brownian exit pairs
\[
a_{i}=(B^{i}_{\tau^{i}_{U}},\tau^{i}_{U}),\,\,\,\,b_{j}=(Z^{j}_{\tau^{j}_{V}},\tau^{j}_{V}),
\]
using the Brownian-{}-parabolic cost
\[
d_{p}(a_{i},b_{j})^{p}=|B^{i}_{\tau^{i}_{U}}-Z^{j}_{\tau^{j}_{V}}|^{p}+|\tau^{i}_{U}-\tau^{j}_{V}|^{p/2}.
\]
The top-left panel corresponds to the minimizing permutation $\sigma_{N}$,
while the bottom-left panel corresponds to the maximizing permutation
$\varpi_{N}$. The right panels display one atom of the corresponding
empirical path coupling: the top-right panel shows a matched pair
$(B^{i},Z^{\sigma_{N}(i)})$ sampled from $\widehat{\Gamma}_{\sigma_{N}}$,
and the bottom-right panel shows a matched pair $(B^{i},Z^{\varpi_{N}(i)})$
sampled from$\widehat{\Gamma}_{\varpi_{N}}$. Thus the matching is
performed at the exit-pair level, while the resulting pairing is lifted
to the full simulated Brownian paths.
\end{figure}

The lifting mechanism also gives a concrete physical interpretation
of the transport plans considered above. One may regard the two Brownian
paths as the trajectories of two rapidly diffusing particles, started
from the same point and observed until they leave the prescribed domains.
A coupling of exit pairs first prescribes how the two final space-time
observations are matched; the lifting theorem then realizes this matching
by a coupling of the full particle trajectories. In this interpretation,
the same-path coupling corresponds to forcing the two particles to
follow the same random trajectory until the relevant exit times, while
the independent-path coupling corresponds to independent particle
trajectories. The preceding results show that neither of these two
natural mechanisms is universally optimal or universally extremal.
The least or largest Brownian transport cost may require a more subtle
organization of the paths, determined by the optimal coupling of their
exit-pair laws rather than by an a priori choice of identical or independent
motion.

\section{Remaining Proofs}\label{sec:Proof}
\begin{proof}
(Theorem \ref{thm:expansion lamda}) Since $U$ is a bounded domain
containing $0$, there exist constants $0<r<R<\infty$ such that
\[
r\le|\xi|,|\xi'|\le R\qquad\pi\text{-a.s.}
\]
Moreover,
\[
\tau_{D(0,r)}\le\theta,\theta'\le\tau_{D(0,R)}.
\]
The exit time of a disc has finite positive moments of every order,
and its lower tail is exponentially small near $0$. Consequently
$\theta$ and $\theta'$ have finite positive and negative moments
of every order. Set
\[
q:=\frac{p}{2},\qquad B_{p}:=S^{1/p}_{p}.
\]
We first expand the $p$-th power of $T^{\pi}_{p}(\lambda)$. For
the spatial term, write
\[
|\xi-\lambda\xi'|^{p}=\lambda^{p}\left|\xi'-\frac{\xi}{\lambda}\right|^{p}.
\]
Since $|\xi'|\ge r>0$ and $|\xi|\le R$, we have, uniformly for large
$\lambda$,
\[
\left|\xi'-\frac{\xi}{\lambda}\right|^{p}=|\xi'|^{p}\left(1-\frac{2}{\lambda}\frac{\xi'\cdot\xi}{|\xi'|^{2}}+\frac{1}{\lambda^{2}}\frac{|\xi|^{2}}{|\xi'|^{2}}\right)^{p/2}.
\]
The function $z\mapsto(1+z)^{p/2}$ is analytic near $z=0$, and the
quantity
\[
-\frac{2}{\lambda}\frac{\xi'\cdot\xi}{|\xi'|^{2}}+\frac{1}{\lambda^{2}}\frac{|\xi|^{2}}{|\xi'|^{2}}
\]
is uniformly $O(\lambda^{-1})$. Therefore, for every $N\ge0$,
\[
\mathbf{E}_{\pi}|\xi-\lambda\xi'|^{p}=\lambda^{p}\sum^{N}_{n=0}a^{\pi}_{n,\mathrm{sp}}\lambda^{-n}+O(\lambda^{p-N-1}),
\]
where the first two coefficients are
\[
a^{\pi}_{0,\mathrm{sp}}=\mathbf{E}|\xi'|^{p},
\]
and
\[
a^{\pi}_{1,\mathrm{sp}}=-p\,\mathbf{E}_{\pi}\left(|\xi'|^{p-2}\xi'\cdot\xi\right).
\]
Now consider the temporal term. We have
\[
|\theta-\lambda^{2}\theta'|^{q}=\lambda^{p}\left|\theta'-\frac{\theta}{\lambda^{2}}\right|^{q}.
\]
The difficulty is that $\theta'$ is not bounded away from $0$. However,
in virtue of Schilder’s theorem \cite{dembo2009large}, $\theta'$
has an exponentially small lower tail, hence finite negative moments
of every order. We use this to justify the expansion at the level
of expectations. On the set
\[
A_{\lambda}:=\left\{ \theta\le\frac{1}{2}\lambda^{2}\theta'\right\} ,
\]
we may write
\[
\left|\theta'-\frac{\theta}{\lambda^{2}}\right|^{q}=(\theta')^{q}\left(1-\frac{\theta}{\lambda^{2}\theta'}\right)^{q}.
\]
Taylor expansion gives, for every $K\ge0$,
\[
(\theta')^{q}\left(1-\frac{\theta}{\lambda^{2}\theta'}\right)^{q}=\sum^{K}_{k=0}\binom{q}{k}(-1)^{k}\theta^{k}(\theta')^{q-k}\lambda^{-2k}+\mathcal{R}_{K,\lambda}.
\]
The remainder satisfies
\[
|\mathcal{R}_{K,\lambda}|\le C_{K}\lambda^{-2K-2}\theta^{K+1}(\theta')^{q-K-1}.
\]
This is integrable, because $\theta$ has all positive moments and
$\theta'$ has all negative moments. On the complement $A^{c}_{\lambda}$,
we have
\[
\theta>\frac{1}{2}\lambda^{2}\theta'.
\]
For any $m>0$,
\[
\mathbf{1}_{A^{c}_{\lambda}}\le2^{m}\lambda^{-2m}\theta^{m}(\theta')^{-m}.
\]
Since all the corresponding positive and negative moments are finite,
the contribution of $A^{c}_{\lambda}$ is $O(\lambda^{-M})$ for any
prescribed $M$, by choosing $m$ sufficiently large. Hence the temporal
term also has an expectation-level expansion:
\[
\mathbf{E}_{\pi}|\theta-\lambda^{2}\theta'|^{q}=\lambda^{p}\sum^{K}_{k=0}\binom{q}{k}(-1)^{k}\mathbf{E}_{\pi}\left[\theta^{k}(\theta')^{q-k}\right]\lambda^{-2k}+O(\lambda^{p-2K-2}).
\]
Combining the spatial and temporal expansions, we obtain, for every
$N\ge0$,
\[
T^{\pi}_{p}(\lambda)^{p}=\lambda^{p}\left(S_{p}+\sum^{N}_{n=1}a^{\pi}_{n}\lambda^{-n}+O(\lambda^{-N-1})\right),
\]
where
\[
a^{\pi}_{1}=-p\,\mathbf{E}_{\pi}\left(|\xi'|^{p-2}\xi'\cdot\xi\right).
\]
Since $S_{p}>0$, we may take the $p$-th root. Namely,
\[
T^{\pi}_{p}(\lambda)=\lambda S^{1/p}_{p}\left(1+\sum^{N}_{n=1}\frac{a^{\pi}_{n}}{S_{p}}\lambda^{-n}+O(\lambda^{-N-1})\right)^{1/p}.
\]
The function $x\mapsto(1+x)^{1/p}$ is analytic near $x=0$, so expanding
it gives
\[
T^{\pi}_{p}(\lambda)=\lambda S^{1/p}_{p}+\sum^{N}_{j=0}b^{\pi}_{j}\lambda^{-j}+O(\lambda^{-N-1}).
\]
The constant term comes from $a^{\pi}_{1}$:
\[
b^{\pi}_{0}=\frac{a^{\pi}_{1}}{pS^{1-1/p}_{p}}=-\frac{\mathbf{E}_{\pi}\left(|\xi'|^{p-2}\xi'\cdot\xi\right)}{S^{1-1/p}_{p}}.
\]
\end{proof}

\begin{proof}
(Proposition \ref{thm:scaling optimal}) Let 
\[
(\xi,\theta)\sim\text{Law}(Z_{\tau_{U}},\tau_{U}).
\]
By the scaling property of Brownian motion, 
\[
(\lambda\xi,\lambda^{2}\theta)\sim\text{Law}(Z_{\tau_{\lambda U}},\tau_{\lambda U}).
\]
Hence the scaling coupling gives 
\[
\Lambda^{p}_{p}(U,\lambda U)\le|\lambda-1|^{p}\mathbf{E}(|\xi|^{p})+|\lambda^{2}-1|^{\frac{p}{2}}\mathbf{E}(\theta^{\frac{p}{2}}).
\]
We now show that this coupling is optimal. For the spatial part, let
$Y$ be any random variable with the law of $\lambda\xi$ (defined
on the same sample space $\Omega$) . By the reverse triangular inequality
\[
\|Y-\xi\|_{L^{p}(\Omega)}\ge\bigl|\|Y\|_{L^{p}(\Omega)}-\|\xi\|_{L^{p}(\Omega)}\bigr|=|\lambda-1|\,\|\xi\|_{L^{p}(\Omega)}.
\]
That is, 
\[
\mathbf{E}(|Y-\xi|^{p})\ge|\lambda-1|^{p}\mathbf{E}(|\xi|^{p}).
\]
Equality is attained by taking $Y=\lambda\xi$. For the temporal part,
the same phenomenon occurs since $q:=\frac{p}{2}\ge1$. Since the
same scaling coupling simultaneously attains the optimal spatial and
temporal costs, it is optimal for the joint cost, which proves the
formula. 
\end{proof}

The difference with the case $p\ge2$ is that the temporal exponent
\[
q:=\frac{p}{2}
\]
now satisfies $0<q<1$. Consequently, the $L^{q}$-argument no longer
applies, since $L^{q}$ is not a normed space for $q<1$, and one
cannot use the reverse triangle inequality in $L^{q}$. Equivalently,
the one-dimensional cost $|t-s|^{q}$ is no longer convex. Thus, while
the scaling coupling still provides a natural and explicit admissible
transport plan, its optimality is no longer automatic, and in general
we only obtain the above upper bound rather than an exact formula.
If $1\le p<2$, we still have 
\[
\Lambda_{p}(U,\lambda U)^{p}\le|\lambda-1|^{p}\mathbf{E}(|Z_{\tau_{U}}|^{p})+|\lambda^{2}-1|^{\frac{p}{2}}\mathbf{E}(\tau^{\frac{p}{2}}_{U}).
\]

\begin{proof}
(Proposition \ref{prop:Unit disc case}) For Brownian motion started
at $0$ in the unit disk, the exit point and exit time are independent.
More precisely, we may write
\[
Z_{\tau_{\mathbb{D}}}=\xi=e^{i\Theta},
\]
where $\Theta$ is uniformly distributed on $[0,2\pi)$ and is independent
of
\[
\theta:=\tau_{\mathbb{D}}.
\]
By Brownian scaling, the exit pair from $\lambda\mathbb{D}$ has the
same law as
\[
(\lambda\xi',\lambda^{2}\theta'),
\]
where
\[
\xi'=e^{i\Theta'},
\]
$\Theta'$ is uniformly distributed on $[0,2\pi)$, $\theta'$ has
the same law as $\theta$, and $\Theta'$ is independent of $\theta'$.
We first compute the supremum. The spatial part satisfies the pointwise
bound
\[
|e^{i\Theta}-\lambda e^{i\Theta'}|\le1+\lambda.
\]
Equality holds if and only if
\[
e^{i\Theta'}=-e^{i\Theta}.
\]
Thus the maximal spatial contribution is
\[
(1+\lambda)^{p}.
\]
For the time part, since $p\geq2$, the exponent $\frac{p}{2}\geq1$.
The one-dimensional rearrangement inequality gives that the maximal
coupling of $\theta$ and $\lambda^{2}\theta'$ for the cost
\[
|t-s|^{q}
\]
is the anti-monotone quantile coupling:
\[
\theta=Q(r),\qquad\lambda^{2}\theta'=\lambda^{2}Q(1-r).
\]
See \cite{CSS1976}. Since the exit angle and exit time are independent
in the disk, the maximizing spatial coupling and the maximizing time
coupling can be combined. Therefore
\[
\Phi_{p}(\mathbb{D},\lambda\mathbb{D})^{p}=(1+\lambda)^{p}+\int^{1}_{0}\left|Q(r)-\lambda^{2}Q(1-r)\right|^{q}\,dr.
\]
Now consider the independent coupling. Let
\[
\xi=e^{i\Theta},\qquad Y=\lambda e^{i\Theta'},
\]
where $\Theta,\Theta'$ are independent uniform random variables.
Then
\[
\Theta-\Theta'
\]
is uniform modulo $2\pi$, and hence
\[
\mathbf{E}(|\xi-Y|^{p})=\frac{1}{2\pi}\int^{2\pi}_{0}\left(1+\lambda^{2}-2\lambda\cos\theta\right)^{p/2}\,d\theta=I_{p}(\lambda).
\]
Similarly, for the independent time variables,
\[
\mathbf{E}(|\theta-\lambda^{2}\theta'|^{q})=\int^{1}_{0}\int^{1}_{0}\left|Q(r)-\lambda^{2}Q(u)\right|^{q}drdu=J_{q}(\lambda).
\]
Thus
\[
T^{\mathrm{ind}}_{p}(\mathbb{D},\lambda\mathbb{D})^{p}=I_{p}(\lambda)+J_{q}(\lambda).
\]
\end{proof}

\begin{proof}
(Proposition \ref{prop:supremal and independent disc}) Set
\[
A:=\Phi_{p}(\mathbb{D},\lambda\mathbb{D}),\qquad B:=T^{\mathrm{ind}}_{p}(\mathbb{D},\lambda\mathbb{D}),\qquad q:=\frac{p}{2}.
\]
We first prove the corresponding powered gap. By the formulas for
$\Phi_{p}$ and $T^{\mathrm{ind}}_{p}$,
\[
A^{p}-B^{p}=\left[(1+\lambda)^{p}-I_{p}(\lambda)\right]+\left[\int^{1}_{0}|Q(r)-\lambda^{2}Q(1-r)|^{q}\,dr-J_{q}(\lambda)\right].
\]
The second bracket is nonnegative, because for the one-dimensional
cost
\[
(x,y)\longmapsto|x-\lambda^{2}y|^{q},\qquad q\ge1,
\]
the countermonotone coupling maximizes the transport cost among all
couplings of the law of $\tau_{\mathbb{D}}$ with itself. Hence
\[
A^{p}-B^{p}\ge(1+\lambda)^{p}-I_{p}(\lambda).
\]
We now estimate the spatial term. Recall that
\[
I_{p}(\lambda)=\frac{1}{2\pi}\int^{2\pi}_{0}(1+\lambda^{2}-2\lambda\cos\theta)^{p/2}\,d\theta.
\]
Let
\[
f(x):=(1+\lambda^{2}-2\lambda x)^{p/2},\qquad-1\le x\le1.
\]
Since $p\ge2$, the function $f$ is convex. Therefore, for $x\in[-1,1]$,
\[
f(x)\le\frac{1+x}{2}f(1)+\frac{1-x}{2}f(-1).
\]
Taking $x=\cos\theta$ and integrating over $[0,2\pi]$, we obtain
\[
I_{p}(\lambda)\le\frac{f(1)+f(-1)}{2}=\frac{(\lambda-1)^{p}+(1+\lambda)^{p}}{2}.
\]
Thus
\[
A^{p}-B^{p}\ge\frac{(1+\lambda)^{p}-(\lambda-1)^{p}}{2}.
\]
Since $A^{p}-B^{p}\ge0$, we have $A\ge B$. By convexity of $x\mapsto x^{p}$,
\[
A^{p}-B^{p}\le pA^{p-1}(A-B).
\]
Hence
\[
A-B\ge\frac{A^{p}-B^{p}}{pA^{p-1}}\ge\frac{(1+\lambda)^{p}-(\lambda-1)^{p}}{2pA^{p-1}}.
\]
It remains to bound $A$ from above. From the formula for $\Phi_{p}$,
\[
A^{p}=(1+\lambda)^{p}+\int^{1}_{0}|Q(r)-\lambda^{2}Q(1-r)|^{q}\,dr.
\]
By Minkowski's inequality in $L^{q}(0,1)$,
\[
\left(\int^{1}_{0}|Q(r)-\lambda^{2}Q(1-r)|^{q}\,dr\right)^{1/q}\le(1+\lambda^{2})\mathbf{E}(\tau^{q}_{\mathbb{D}})^{1/q}.
\]
Since $\lambda\ge1$, we have
\[
(1+\lambda^{2})^{q}\le(1+\lambda)^{p}.
\]
Thus
\[
A^{p}\le(1+\lambda)^{p}\left(1+\mathbf{E}(\tau^{p/2}_{\mathbb{D}})\right).
\]
Hence
\[
A^{p-1}\le(1+\lambda)^{p-1}\left(1+\mathbf{E}(\tau^{p/2}_{\mathbb{D}})\right)^{(p-1)/p}.
\]
Substituting this into the previous inequality gives
\[
\Phi_{p}(\mathbb{D},\lambda\mathbb{D})-T^{\mathrm{ind}}_{p}(\mathbb{D},\lambda\mathbb{D})=A-B\ge\frac{1}{p\left(1+\mathbf{E}(\tau^{p/2}_{\mathbb{D}})\right)^{(p-1)/p}}
\]
where we used 
\[
\frac{(1+\lambda)^{p}-(\lambda-1)^{p}}{(1+\lambda)^{p-1}}\geq2.
\]
\end{proof}

\section{Conclusion and further directions }\label{sec:Conclusion-and-further}

The results of this paper show that $p$-Brownian convergence provides
a natural probabilistic way to compare planar domains through the
joint law of the Brownian exit position and exit time. After recording
several geometric features of this convergence, we introduced Brownian
transport costs between domains and showed that the resulting Brownian-Wasserstein
distance metrizes $p$-Brownian convergence. We then compared different
natural couplings of exit pairs, in particular the same-path and independent
couplings, and obtained explicit criteria showing when one is cheaper
than the other. The analysis becomes especially transparent for $p=2$,
where the costs can be expressed in terms of mean exit times and intersections
of exit times. We also derived exact formulas for homothetic domains,
proved monotonicity properties for the independent cost, and computed
explicit examples such as strips and discs. These results suggest
that the competition between couplings reflects meaningful geometric
information about the domains, leading naturally to the problem of
understanding the Brownian coupling threshold for families of translated
or reflected domains.

\subsubsection*{Coupling transition threshold}

Inspired by the overlapping strips, we formulate a more general problem.
Let $U$ be a bounded domain, and call $U^{s}$ the symmetric counterpart
of $U$ with respect to the $y$-axis. Set 
\[
U_{a}=U+a
\]
the shifted copy of $U$, where $a\in\mathbb{R}$. Let $I=(s,t)$
be the largest open interval of parameters $a$ such that $0\in U_{a}$.
Our question is the sign of the function 
\[
\iota:a\in I\longmapsto T^{\mathrm{ind}}_{2}(U_{a},(U_{a})^{s})-T^{\mathrm{same}}_{2}(U_{a},(U_{a})^{s}).
\]
If $U=U^{s}$ then it is clear that $\iota$ is even and $\iota(0)>0$.
We conjecture that if $U$ is, in addition, convex then $\iota$ switch
sign once on the positive part of $I$. More precisely, there exists
a unique $a^{*}\in I$ such that $\iota(a^{*})=0$ and 
\[
\iota_{|(0,a^{*})}>0,\,\,\iota_{|(a^{*},t)}<0.
\]
 We would call such an $a^{*}$ \emph{the coupling transition threshold}.
It marks the transition between the regime where the same-path coupling
is cheaper and the regime where the independent coupling is cheaper.
The overlapping-strip model (introduced at the end of section \ref{sec:A-Brownian-Wasserstein-distance})
gives an explicit prototype for this transition. Let
\[
\mathcal{S}_{\ell}=\left\{ z=x+iy:\ |x|<\frac{\ell}{2}\right\} ,\qquad U_{a}=\mathcal{S}_{\ell}+a.
\]
Then
\[
(U_{a})^{s}=\mathcal{S}_{\ell}-a.
\]
Writing
\[
\varepsilon=\frac{\ell}{2}-a,\qquad L=\frac{\ell}{2}+a
\]
we obtain
\[
U_{a}=\{-\varepsilon<x<L\},\qquad(U_{a})^{s}=\{-L<x<\varepsilon\}.
\]
Thus the overlapping strips correspond to the translation-reflection
model with
\[
\frac{L}{\varepsilon}=\frac{\frac{\ell}{2}+a}{\frac{\ell}{2}-a}.
\]
Keep labeling the exit time from $U_{a}$ as $\theta$ and the exit
time from $(U_{a})^{s}$ as $\theta'$ with $\theta\independent\theta'$.
The quantity $\mathbf{E}(\theta\wedge\theta')$ admits an explicit
spectral representation. Set
\[
\alpha:=\frac{\varepsilon}{L+\varepsilon},
\]
then, by applying the reflection principle infinitely many times at
the boundary of a strip ($U_{a}$ in our case), we get 
\[
\begin{alignedat}{1}\mathbf{P}(\theta>t) & =\frac{4}{\pi}\sum^{\infty}_{n=0}\frac{1}{2n+1}\sin\left((2n+1)\pi\alpha\right)\exp\left(-\frac{(2n+1)^{2}\pi^{2}}{2\ell^{2}}t\right)\\
 & =\frac{4}{\pi}\sum^{\infty}_{n=0}\frac{1}{2n+1}\sin\left((2n+1)\pi\alpha\right)\exp\left(-\frac{(2n+1)^{2}\pi^{2}}{2(L+\varepsilon)^{2}}t\right).
\end{alignedat}
\]
For the trick of reflecting the Brownian path infinitely many times,
see \cite{schilling2014brownian,markowsky2018distribution}. On the
other hand, as $\theta\independent\theta'$ we obtain
\[
\mathbf{E}(\theta\wedge\theta')=\int^{+\infty}_{0}\mathbf{P}(\theta>t)^{2}dt.
\]
Termwise integration gives the rapidly convergent double series
\[
\mathbf{E}(\theta\wedge\theta')=\frac{32\ell^{2}}{\pi^{4}}\sum^{\infty}_{m,n=0}\frac{\sin\!\left((2m+1)\pi\alpha\right)\sin\!\left((2n+1)\pi\alpha\right)}{(2m+1)(2n+1)\bigl((2m+1)^{2}+(2n+1)^{2}\bigr)}.
\]
Now put
\[
\rho:=\frac{L}{\varepsilon}.
\]
Then
\[
\ell=\varepsilon(\rho+1),\qquad\alpha=\frac{1}{\rho+1}.
\]
Thus

\[
\mathbf{E}(\theta\wedge\theta')=\varepsilon^{2}F(\rho),
\]
where

\[
F(\rho):=\frac{32(\rho+1)^{2}}{\pi^{4}}\sum^{\infty}_{m,n=0}\frac{\sin\!\left(\frac{(2m+1)\pi}{\rho+1}\right)\sin\!\left(\frac{(2n+1)\pi}{\rho+1}\right)}{(2m+1)(2n+1)\bigl((2m+1)^{2}+(2n+1)^{2}\bigr)}.
\]
Consequently,
\[
T^{\mathrm{same}}_{2}(U_{a},(U_{a})^{s})<T^{\mathrm{ind}}_{2}(U_{a},(U_{a})^{s})\quad\Longleftrightarrow\quad F\left(\frac{L}{\varepsilon}\right)<3.
\]
Now recall 
\[
\iota(a):=T^{\mathrm{ind}}_{2}(U_{a},(U_{a})^{s})-T^{\mathrm{same}}_{2}(U_{a},(U_{a})^{s}).
\]
Hence

\[
\iota(a)=\varepsilon\left[\sqrt{6\rho-2F(\rho)}-\sqrt{6(\rho-1)}\right].
\]
As 
\[
\varepsilon=\frac{\ell}{2}-a,\qquad\rho=\frac{\frac{\ell}{2}+a}{\frac{\ell}{2}-a}
\]
we get 
\[
\iota(a)=\left(\frac{\ell}{2}-a\right)\left[\sqrt{6\frac{\frac{\ell}{2}+a}{\frac{\ell}{2}-a}-2F\left(\frac{\frac{\ell}{2}+a}{\frac{\ell}{2}-a}\right)}-\sqrt{6\left(\frac{\frac{\ell}{2}+a}{\frac{\ell}{2}-a}-1\right)}\right].
\]
In particular,

\[
\text{sign}(\iota(a))=\text{sign}\left(3-F\left(\frac{\frac{\ell}{2}+a}{\frac{\ell}{2}-a}\right)\right).
\]
Thus the transition threshold is determined by
\[
F(\rho^{*})=3,\qquad a^{*}=\left(\frac{\rho^{*}-1}{\rho^{*}+1}\right)\frac{\ell}{2}.
\]
This means that 
\[
\frac{a^{*}}{\frac{\ell}{2}}=\frac{\rho^{*}-1}{\rho^{*}+1}
\]
is constant for any $\ell>0$. 

\begin{figure}[H]

\begin{centering}
\includegraphics[width=11cm,totalheight=11cm,keepaspectratio]{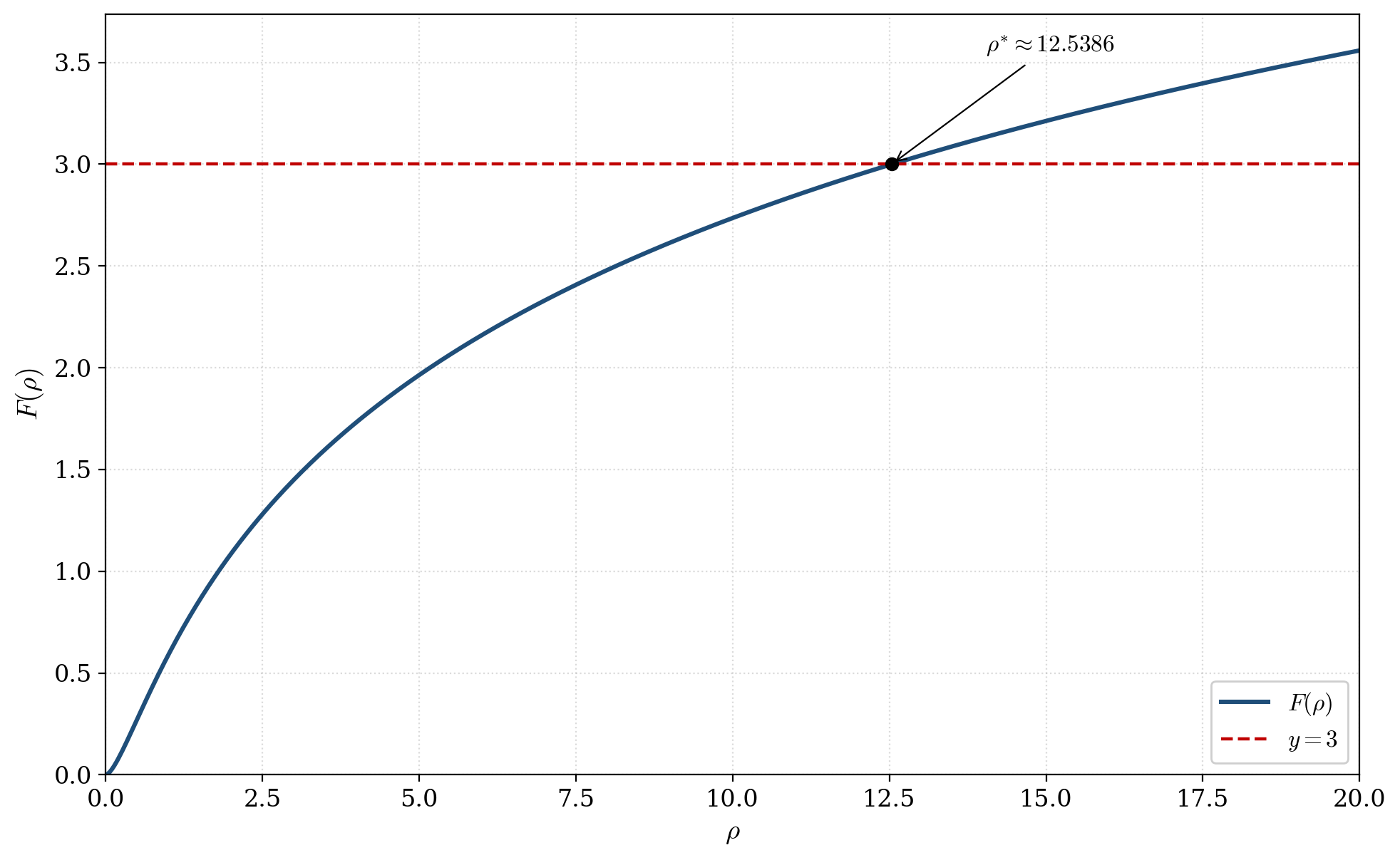}\caption{Plot of the function $\rho\protect\mapsto F(\rho)$.}
\par\end{centering}
\end{figure}

As $\rho^{*}\approx12.5386$ then our coupling transition threshold
is
\[
a^{*}\approx0.85227\cdot\frac{\ell}{2},
\]
which corresponds to an overlap width of 
\[
2\varepsilon^{*}=0.14772\ell
\]
Thus same-path remains cheaper until the overlap has shrunk to about
14.8\% of the strip width.

\subsubsection*{Brownian norm of a domain $U$}

Let $U$ be a domain. The quantity 
\begin{equation}
\Vert U\Vert^{\mathbf{B}}_{p}:=S^{1/p}_{p}=\left(\mathbf{E}(|Z_{\tau_{U}}|^{p})+\mathbf{E}(\tau^{p/2}_{U})\right)^{1/p}.\label{eq:Brownian norm}
\end{equation}
encodes the size of $U$ as seen a Brownian particle wandering inside
$U$. We  call it as Brownian norm of $U$. In particular, whenever
$U\subset V$ and both quantities are finite, one has
\begin{equation}
U\subset V\Longrightarrow\Vert U\Vert^{\mathbf{B}}_{p}\leq\Vert V\Vert^{\mathbf{B}}_{p}.\label{norm increa}
\end{equation}
Moreover, for every $\alpha\in\mathbb{C}$, 
\[
\Vert\alpha U\Vert^{\mathbf{B}}_{p}=|\alpha|\Vert U\Vert^{\mathbf{B}}_{p}.
\]
On the set of analytic maps acting on the unit disc, we can similarly
consider the map
\[
f\longmapsto\Vert f\Vert^{\mathbf{B}}_{p}=\Vert f(\mathbb{D})\Vert^{\mathbf{B}}_{p}.
\]
When $p=2$, $\Vert U\Vert^{\mathbf{B}}_{2}$ simplifies to 
\[
\Vert U\Vert^{\mathbf{B}}_{2}=\sqrt{3\mathbf{E}(\tau_{U})}.
\]
The case $p=2$ is special because as $\mathbf{E}(\tau_{U})$ is the
value at the origin of the torsion function of $U$\cite{philippin1996isoperimetric}.
We now compute the Brownian norm for a non-simply connected non-circular
domain. Fix $c>0$ and $0<\alpha<\gamma<\beta$. For $\eta>0$, let
\[
E_{\eta}:=\left\{ (x,y)\in\mathbb{R}^{2}:\frac{x^{2}}{c^{2}\cosh^{2}\eta}+\frac{y^{2}}{c^{2}\sinh^{2}\eta}<1\right\} .
\]
The ellipses $\partial E_{\eta}$ have the same foci $(\pm c,0)$.
Set
\[
d:=c\cosh\gamma,
\]
and define the shifted elliptic annulus
\[
U:=\left\{ x+iy:\frac{(x+d)^{2}}{c^{2}\cosh^{2}\beta}+\frac{y^{2}}{c^{2}\sinh^{2}\beta}<1\right\} \setminus\overline{\left\{ x+iy:\frac{(x+d)^{2}}{c^{2}\cosh^{2}\alpha}+\frac{y^{2}}{c^{2}\sinh^{2}\alpha}<1\right\} }.
\]
\begin{center}
\begin{tikzpicture}[
    >={Stealth[length=2.4mm]},
    scale=1.5,
    every node/.style={font=\small}
  ]
 
  \def\cVal{1}        
  \def\alphaVal{0.4}  
  \def\gammaVal{0.9}  
  \def\betaVal{1.3}   
 
  \pgfmathsetmacro{\dVal}{\cVal*cosh(\gammaVal)}             
  \pgfmathsetmacro{\aBeta} {\cVal*cosh(\betaVal)}
  \pgfmathsetmacro{\bBeta} {\cVal*sinh(\betaVal)}
  \pgfmathsetmacro{\aAlpha}{\cVal*cosh(\alphaVal)}
  \pgfmathsetmacro{\bAlpha}{\cVal*sinh(\alphaVal)}
  \pgfmathsetmacro{\aGamma}{\cVal*cosh(\gammaVal)}
  \pgfmathsetmacro{\bGamma}{\cVal*sinh(\gammaVal)}
 
  \begin{scope}[even odd rule]
    \clip (-\dVal,0) ellipse ({\aBeta} and {\bBeta})
          (-\dVal,0) ellipse ({\aAlpha} and {\bAlpha});
    \fill[blue!12] (-5,-2.5) rectangle (1.5,2.5);
  \end{scope}
 
  \draw[->, gray!60] (-4.0,0) -- (1.0,0) node[right, black] {$x$};
  \draw[->, gray!60] (0,-2.0) -- (0,2.0)   node[above, black] {$y$};
 
  \draw[dashed, gray!70, thick]
        (-\dVal,0) ellipse ({\aGamma} and {\bGamma});
 
  \draw[very thick]
        (-\dVal,0) ellipse ({\aBeta} and {\bBeta});
 
  \draw[very thick]
        (-\dVal,0) ellipse ({\aAlpha} and {\bAlpha});
 
  \pgfmathsetmacro{\Fxr}{-\dVal + \cVal}
  \pgfmathsetmacro{\Fxl}{-\dVal - \cVal}
  \filldraw (\Fxr,0) circle (0.9pt);
  \filldraw (\Fxl,0) circle (0.9pt);
  \node[below=2pt] at (\Fxr,0) {\scriptsize $-d{+}c$};
  \node[below=2pt] at (\Fxl,0) {\scriptsize $-d{-}c$};
 
  \filldraw (-\dVal,0) circle (0.7pt);
  \node[above=2pt] at (-\dVal,0) {\scriptsize $(-d,0)$};
 
  \filldraw[red!75!black] (0,0) circle (1.4pt);
  \node[red!70!black, above right=1pt] at (0,0) {$0$};
 
  \node[above]                at (-\dVal,\bBeta+0.05)         {$\partial E_{\beta}$};
  \node[above, gray!55!black] at (-\dVal,\bGamma+0.05)        {$\partial E_{\gamma}$};
  \node[above]                at (-\dVal,\bAlpha+0.05)        {$\partial E_{\alpha}$};
 
  \node at (-2.7,1.4) {\large $U$};
 
\end{tikzpicture}
\end{center}
Solving the torsion problem in $U$ using elliptic coordinates simplifies
to solving standard differential equations (we do not include the
details here). We get the closed form 
\[
\|U\|^{\mathbf{B}}_{2}=\left\{ \frac{3c^{2}}{4}\left[\frac{\beta-\gamma}{\beta-\alpha}\cosh(2\alpha)+\frac{\gamma-\alpha}{\beta-\alpha}\cosh(2\beta)-\cosh(2\gamma)+\frac{\sinh(2(\beta-\gamma))+\sinh(2(\gamma-\alpha))}{\sinh(2(\beta-\alpha))}-1\right]\right\} ^{1/2}.
\]

\subsubsection*{Brownian repulsion}

Besides the minimal Brownian transport cost, it is natural to consider
the opposite extremal quantity. We define the $p$-Brownian repulsion
of $U$ by
\[
\mathbf{R}_{p}(U):=\Phi_{p}(U,U).
\]
Thus $\mathbf{R}_{p}(U)$ measures the maximal possible separation
between two Brownian exit pairs having the same law. Unlike the Brownian-Wasserstein
distance, it does not vanish when the two domains coincide. It is
rather a self-interaction quantity, or a Brownian analogue of the
diameter of the exit-pair distribution. It is also controlled by the
Brownian norm
\[
0\le\mathbf{R}_{p}(U)\le2\|U\|^{\mathbf{B}}_{p}.
\]
Indeed, for any coupling of two exit pairs $(\xi,\theta)$ and $(\xi',\theta')$
with law $\mu_{U}$, the triangle inequality for the Brownian-parabolic
metric gives
\[
d_{p}((\xi,\theta),(\xi',\theta'))\le d_{p}((\xi,\theta),(0,0))+d_{p}((\xi',\theta'),(0,0)).
\]
Taking $L^{p}$-norms and then the supremum over couplings gives the
bound. If $U$ is symmetric with respect to the origin, namely $U=-U$,
then
\[
(Z_{\tau_{U}},\tau_{U})\stackrel{d}{=}(-Z_{\tau_{U}},\tau_{U}).
\]
Therefore the coupling
\[
(\xi',\theta')=(-\xi,\theta)
\]
is admissible, and yields the lower bound
\[
\mathbf{R}_{p}(U)^{p}\ge2^{p}\,\mathbf{E}(|Z_{\tau_{U}}|^{p}).
\]
In particular, when $p=2$,

\[
\mathbf{R}_{2}(U)^{2}\ge4\,\mathbf{E}(|Z_{\tau_{U}}|^{2})=8\,\mathbf{E}(\tau_{U}).
\]
Together with the general upper bound
\[
\mathbf{R}_{2}(U)^{2}\le4\|U\|^{2}_{\mathbf{B},2}=12\,\mathbf{E}(\tau_{U}),
\]
we obtain, for every origin-symmetric domain with finite mean exit
time,
\[
8\,\mathbf{E}(\tau_{U})\le\mathbf{R}_{2}(U)^{2}\le12\,\mathbf{E}(\tau_{U}).
\]
For the unit disc, by Proposition \ref{prop:Unit disc case}, repulsion
can be written explicitly as 
\[
\mathbf{R}_{p}(\mathbb{D})^{p}=2^{p}+\int^{1}_{0}|Q(r)-Q(1-r)|^{p/2}\,dr.
\]

\bibliographystyle{plain}
\bibliography{NumericalApproach}

\end{document}